\newtheorem{theorem}{Theorem}[section]
\newtheorem*{theorem*}{Theorem}
\newtheorem{lemma}[theorem]{Lemma}
\newtheorem{proposition}[theorem]{Proposition}
\theoremstyle{definition}
\newtheorem{definition}[theorem]{Definition}
\theoremstyle{remark}
\newtheorem{remark}[theorem]{Remark}
\numberwithin{equation}{section}
\newcommand{\union}{\mathop{\bigcup}}
\newcommand{\re}{\mathrm{Re}}
\newcommand{\im}{\mathrm{Im}}
\newcommand{\Int}{\mathrm{int}}
\newcommand{\del}{\partial}
\newcommand{\norm}[1]{\left\lVert#1\right\rVert}
\newcommand{\htop}{h_{\mathrm{top}}}
\newcommand{\SL}{\mathrm{SL}}
\newcommand{\cl}{\mathrm{cl}}
\newcommand{\N}{\mathbb N}
\newcommand{\Z}{\mathbb Z}
\newcommand{\R}{\mathbb R}
\newcommand{\C}{\mathbb C}
\newcommand{\T}{\mathbb T}
\newcommand{\As}{\mathcal A}
\newcommand{\Ss}{\mathcal S}
\newcommand{\Bs}{\mathcal B}
\newcommand{\Ps}{\mathcal P}
\newcommand{\Ns}{\mathcal N}
\newcommand{\Sbb}{\mathbb S}
\newcommand{\diver}{\mathrm{div}}
\newcommand{\Fs}{\mathcal F}
\newcommand{\Us}{\mathcal U}
\newcommand{\Hs}{\mathcal H}
\let\oldphi\phi
\let\phi\varphi
\let\epsilon\varepsilon
\let\oldbar\bar
\let\bar\overline
\begin{document}

\title{Polynomial decay of correlations of pseudo-Anosov diffeomorphisms}


\author{Dominic Veconi}
\address{Department of Mathematics, Wake Forest University, Winston-Salem, NC, USA.}
\email{veconid@wfu.edu}



\maketitle

\begin{abstract}
	We give a construction of a smooth realization of a pseudo-Anosov diffeomorphism of a Riemannian surface, and show that it admits a unique SRB measure with polynomial decay of correlations, large deviations, and the central limit theorem. The construction begins with a linear pseudo-Anosov diffeomorphism whose singularities are fixed points. Near the singularities, the trajectories are slowed down, and then the map is conjugated with a homeomorphism that pushes mass away from the origin. The resulting map is a $C^{2+\epsilon}$ diffeomorphism topologically conjugate to the original pseudo-Anosov map. To prove that this map has polynomial decay of correlations, our main technique is to use the fact that this map has a Young tower, and study the decay of the tail of the first return time to the base of the tower. 
\end{abstract}


\section{Introduction}

In \cite{Kat79}, A. Katok introduced a $C^\infty$ area-preserving diffeomorphism of $\T^2$ with \emph{strict non-uniform hyperbolicity:} the map $G_{\T^2} : \T^2 \to \T^2$ (known as the \emph{Katok map of the torus}) has nonzero Lyapunov exponents Lebesgue-almost everywhere, but admits a singularity at the origin at which the differential $dG_{\T^2}$ is the identity. As a consequence, there are trajectories admitting zero Lyapunov exponents, and so the Lyapunov exponents of $G_{\T^2}$ come arbitrarily close to 0 over $\T^2$ (this is the sense in which the nonuniform hyperbolicity is strict). The map $G_{\T^2}$ is a Bernoulli automorphism (meaning $G_{\T^2}$ is measurably isomorphic to a Bernoulli shift), and is topologically conjugate to a linear Anosov diffeomorphism of $\T^2$. Then in \cite{GerbKatPA}, A. Katok and M. Gerber extended the construction of the Katok map to any compact Riemannian surface, presenting a wide family of area-preserving $C^\infty$ diffeomorphisms that are both strictly non-uniformly hyperbolic and Bernoulli. However, unlike the Katok map $G_{\T^2}$, the Bernoulli diffeomorphisms in \cite{GerbKatPA} are not topologically conjugate to an Anosov map. Indeed, Anosov diffeomorphisms on surfaces admit global stable and unstable 1-dimensional foliations, and no surface with genus $\neq 1$ admits 1-dimensional foliations, so the only surface that admits Anosov diffeomorphisms is the torus $\T^2$. So instead of being conjugate to an Anosov diffeomorphism, the non-uniformly hyperbolic diffeomorphisms in \cite{GerbKatPA} are topologically conjugate to the broader class of \emph{pseudo-Anosov diffeomorphisms.} 

Pseudo-Anosov diffeomorphisms were introduced by W. Thurston in \cite{BThursty} as a generalization of Anosov diffeomorphisms: rather than admitting global stable and unstable submanifolds, pseudo-Anosov maps admit stable and unstable \emph{foliations with singularities}, for which there are a finite number of singularities where multiple leaves of the foliations meet (see Section \ref{subsec:PAHs}). In the theory of mapping class groups, pseudo-Anosov diffeomorphisms play a role in the \emph{Nielsen-Thurston classification} of surface homeomorphisms: 

\begin{theorem*}
	Let $M$ be a compact orientable surface, and let $f : M \to M$ be a homeomorphism. Then $f$ is isotopic to a homeomorphism $F : M \to M$ satisfying exactly one of the following three conditions: 
	\begin{itemize}
		\item $F$ is a rotation: there is an integer $n \geq 1$ for which $F^n = \mathrm{id}$. 
		\item $F$ is a Dehn twist: there is a closed curve that $F$ leaves invariant. 
		\item $F$ is pseudo-Anosov. 
	\end{itemize}
\end{theorem*}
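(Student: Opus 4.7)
The plan is to follow Thurston's original strategy, combining Teichmüller theory with a topological fixed-point argument. Assume for concreteness that $M$ has genus $g \geq 2$; the torus case reduces to the classical action of $\mathrm{SL}(2,\Z)$ on $\mathbb{H}^{2}$, and the sphere and low-complexity cases are elementary. Let $\mathcal{T}(M)$ denote the Teichmüller space of marked hyperbolic structures on $M$, a smooth cell of real dimension $6g - 6$, on which the mapping class group $\mathrm{MCG}(M)$ acts by pullback of markings.

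The central technical input is Thurston's compactification $\overline{\mathcal{T}(M)} = \mathcal{T}(M) \cup \mathcal{PMF}(M)$, where the ideal boundary $\mathcal{PMF}(M)$ is the space of projective measured foliations on $M$. Both $\mathcal{T}(M)$ and $\mathcal{PMF}(M)$ embed into the projectivization of $\R_{\geq 0}^{\mathcal{S}}$, where $\mathcal{S}$ is the set of isotopy classes of essential simple closed curves, via the hyperbolic length function and the geometric intersection number respectively. One verifies that the closure is homeomorphic to a closed ball $\overline{D}^{6g-6}$ and that the $\mathrm{MCG}(M)$-action extends continuously to the boundary.

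Given a homeomorphism $f : M \to M$, I then apply Brouwer's fixed point theorem to the induced self-homeomorphism of $\overline{\mathcal{T}(M)}$ to obtain a fixed point $x$, and perform a case analysis according to where $x$ lies. If $x \in \mathcal{T}(M)$, then $f$ is isotopic to a hyperbolic isometry $F$ of the fixed marked structure; since the isometry group of a closed hyperbolic surface is finite, $F^{n} = \mathrm{id}$ for some $n \geq 1$, giving the periodic (rotation) case. If $x \in \mathcal{PMF}(M)$ is a projective class $[\mathcal{F}]$ with $F_{*}\mathcal{F} = \lambda \mathcal{F}$, and either $\lambda = 1$ or $\mathcal{F}$ contains a closed leaf, one extracts an invariant essential simple closed curve and places $f$ in the reducible (Dehn-twist) case. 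Otherwise $\lambda \neq 1$, and replacing $f$ by $f^{-1}$ if necessary we may take $\lambda > 1$; applying the fixed-point argument to $[f^{-1}]$ then produces a second invariant projective foliation $[\mathcal{G}]$ with stretch factor $\lambda^{-1}$, and the pair $(\mathcal{F}, \mathcal{G})$ is refined into the invariant measured foliations of a pseudo-Anosov representative of $[f]$.

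The main obstacle is not the Brouwer step but the construction of the compactification itself: one must show that the length-function embedding of $\mathcal{T}(M)$ and the intersection-number embedding of $\mathcal{PMF}(M)$ glue together into a topological ball, and that the $\mathrm{MCG}(M)$-action extends continuously across the boundary. A second delicate step is the transversality argument in the pseudo-Anosov case: one must rule out that the invariant foliations arising from $f$ and $f^{-1}$ share any leaves, which uses the non-degeneracy of the intersection pairing together with the fact that $\lambda \neq \lambda^{-1}$, and one must then promote the two transverse measured foliations to a smooth $F$ that genuinely stretches one and contracts the other by $\lambda$, producing the required pseudo-Anosov representative.
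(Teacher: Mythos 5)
The paper itself does not prove this statement: it is the Nielsen--Thurston classification, quoted in the introduction as background and attributed to Thurston, so there is no internal proof to compare against. Your outline follows the standard Thurston/FLP strategy --- embed $\mathcal{T}(M)$ and $\mathcal{PMF}(M)$ in the projectivization of $\R_{\geq 0}^{\mathcal{S}}$, verify $\overline{\mathcal{T}(M)}\cong \overline{D}^{6g-6}$ with a continuous extension of the $\mathrm{MCG}(M)$-action, apply Brouwer, and case-analyze the fixed point --- which is the accepted route to the theorem, so the overall architecture is sound.

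However, two steps in your case analysis have genuine gaps. First, you place the case $\lambda = 1$ into the reducible alternative by ``extracting an invariant essential simple closed curve,'' but if the fixed projective class $[\mathcal{F}]$ is arational (minimal and filling) it has no closed leaves and yields no invariant curve; the arational, $\lambda = 1$ case requires a separate argument (using unique ergodicity of arational foliations and the structure of their stabilizers in $\mathrm{MCG}(M)$) whose conclusion is that $f$ is isotopic to a \emph{finite-order} map, not a reducible one. Second, in the case $\lambda > 1$ you propose to obtain the transverse foliation by applying the Brouwer argument to $f^{-1}$; but $[\mathcal{F}]$ is already a fixed point of the action of $f^{-1}$ (with factor $\lambda^{-1}$), and Brouwer gives no control over \emph{which} fixed point is produced, so this step does not by itself yield a second, distinct projective class, let alone one that is transverse to and jointly filling with $\mathcal{F}$. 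Establishing the existence of the unstable foliation, its transversality to $\mathcal{F}$, and the promotion of the pair to a genuine pseudo-Anosov representative with stretch factors $\lambda^{\pm 1}$ is the technical heart of the theorem (FLP Expos\'es 9--12, or the train-track approach of Bestvina--Handel), and your sketch leaves it essentially unaddressed. Finally, the ``exactly one'' phrasing in the paper's statement needs the usual convention resolving the overlap between periodic and reducible classes, but that is an issue with the paper's paraphrase rather than with your argument.
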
 

The construction of the Katok map $G_{\T^2}$ and the smooth realizations of the pseudo-Anosov maps in \cite{GerbKatPA} both use a similar \emph{slow-down procedure.} In the case of the Katok map $G_{\T^2}$, the construction starts with a linear hyperbolic toral automorphism $f : \T^2 \to \T^2$ induced by a matrix $A \in \SL(2,\Z)$. The map $f$ is then written in coordinates near the origin as the time-1 map of the flow induced by the system of ODEs
\[
\dot s_1 = s_1 \log\lambda, \quad \dot s_2 = -s_2 \log\lambda 
\]
(where $(s_1, s_2)$ are the coordinates induced by the eigenvectors of $A$ with eigenvalues $\lambda > 1$ and $\lambda^{-1} < 1$, respectively). The trajectories of the flow near the origin are then ``slowed down,'' so that the time-1 map of the resulting flow has a differential at the origin equal to the identity. Finally, the new time-1 map is conjugated with a homeomorphism that makes the resulting map area-preserving; the final map is $G_{\T^2}$. 

In constructing the non-uniformly hyperbolic surface diffeomorphisms in \cite{GerbKatPA}, Gerber and Katok begin with a pseudo-Anosov homeomorphism $f : M \to M$ whose singularities are fixed points, construct a continuous vector field in coordinates around each singularity whose time-1 map is $f$, and similarly slow down the vector field trajectories to produce a new time-1 diffeomorphism $g : M \to M$ whose differential at the singularities is the identity. Importantly, the initial pseudo-Anosov map $f$ is not a true diffeomorphism: the map is necessarily not smooth at the singularities of the foliation for $f$. The slowdown procedures used to construct the pseudo-Anosov maps in \cite{GerbKatPA} and the Katok map $G_{\T^2}$ in \cite{Kat79} are similar, but the slowdown procedure used to construct $G_{\T^2}$ in \cite{Kat79} is presented very generally, giving great flexibility with the rate at which the flow trajectories slow down. In contrast, for the pseudo-Anosov maps in \cite{GerbKatPA}, a specific slowdown rate is given. The resulting slowed-down diffeomorphism preserves the area given by the coordinates around each singularity, so no conjugating map is required to further make the diffeomorphism area-preserving. 

In \cite{PSS}, Y. Pesin, S. Senti, and F. Shahidi showed that the Katok map $G_{\T^2}$ has a range of thermodynamic properties: they demonstrate that the unique SRB measure of $G_{\T^2}$ has polynomial decay of correlations (rate of mixing), the central limit theorem, and polynomial large deviations with respect to H\"older observables. They also demonstrate (also in \cite{PSZ17}) that $G_{\T^2}$ has a unique measure of maximal entropy, with respect to which $G_{\T^2}$ has exponential decay of correlations and the central limit theorem. Furthermore, as their main result, they give a construction of a non-uniformly hyperbolic diffeomorphism of any compact surface that comes from gluing the singularity of $G_{\T^2}$ to the surface, and the resulting diffeomorphism has the same thermodynamic properties as the Katok map. 

The goal of this paper is to produce a smooth realization of a pseudo-Anosov diffeomorphism that enjoys these same properties. We provide an alternative construction to prove the main result in \cite{PSS} that any surface has a non-uniformly hyperbolic diffeomorphism with the described thermodynamic properties. Unlike the construction in \cite{PSS}, which begins with the Katok map and uses a sequence of maps $\T^2 \to \Sbb^2 \to D^2 \to M$  to produce a semi-conjugacy between $G_{\T^2}$ and a map of $M$ (where $\Sbb^2$ and $D^2$ are the 2-sphere and the 2-disk, respectively), our construction produces a diffeomorphism that is topologically conjugate to a homeomorphism $f : M \to M$ that is \emph{a priori} independent of maps on other surfaces. 

The techniques in \cite{PSS} are based on modeling the Katok map with a \emph{Young tower}, a symbolic representation of hyperbolic maps by a tower whose base is conjugate to a countable-state Bernoulli shift, originally introduced in \cite{Young}. In \cite{VecPAD1}, we used Young towers to show that the smooth realizations of pseudo-Anosov maps in \cite{GerbKatPA} have a unique measure of maximal entropy with exponential decay of correlations and the Central Limit Theorem with respect to H\"older potentials; and furthermore, the geometric $t$-potentials $\phi_t(x) = -t\log\left|dg|_{E^u(x)}\right|$ admit unique equilibrium states for $t \in (t_0, 1)$ for some $t_0 < 0$ with exponential decay of correlations and the central limit theorem, while the geometric potential $\phi_1(x) = -\log\left|dg|_{E^u(x)}\right|$ has two classes of equilibrium states: a unique SRB measure, and convex combinations of Dirac masses at the singularities. These results mirror the results on the Katok map presented in \cite{PSZ17}. In both \cite{PSZ17} and \cite{VecPAD1}, proving that the Katok map and the pseudo-Anosov smooth realizations admit Young towers required careful examination of the behavior of the trajectories near the neutral fixed point singularities. Additionally, it was necessary to show that the number of partition elements of the Young tower with a given inducing time (first-return time in this case) is exponentially bounded with an exponent strictly less than the topological entropy. Both of these technical challenges could be handled similarly for the systems discussed in both \cite{PSZ17} and \cite{VecPAD1}. However, the arguments proving polynomial decay of correlations for the Katok map in \cite{PSS} require that the slow-down exponent $\alpha > 0$ of the trajectories satisfy $\alpha < \frac 1 3$; the slow-down rate of the pseudo-Anosov smooth realizations in \cite{GerbKatPA}, on the other hand, is specifically chosen to be $\alpha = \frac{p-2}{p}$, where $p \geq 3$ is the number of \emph{prongs of the singularity} (see Section \ref{subsec:PAHs}). When $p \geq 4$, this slow-down exponent falls outside of the range for which the arguments in \cite{PSS} can be directly applied. One of the goals of this paper is to adapt the construction of the smooth realization of pseudo-Anosov maps in \cite{GerbKatPA} in a way that provides more flexibility for producing a non-uniformly hyperbolic surface diffeomorphism with different topological and ergodic properties. In particular, we produce a non-uniformly hyperbolic $C^{2+\epsilon}$ Bernoulli diffeomorphism that is topologically conjugate to a pseudo-Anosov homeomorphism, and whose unique SRB measure has polynomial decay of correlations, the central limit theorem, and polynomial large deviations. 

We remark that many examples of strictly non-uniformly hyperbolic dynamical systems are constructed by inducing a \emph{neutral fixed pont}, which is a fixed point whose differential is the identity (as is done for the Katok map and for the pseudo-Anosov smooth realizations). In many of these cases, the resulting map also has a unique SRB measure with polynomial decay of correlations. In the one-dimensional category, the \textit{Manneville-Pomeau} map $f : I \to I$ is a non-uniformly expanding map with a fixed point at which the derivative is 1 \cite{MP}; it has been shown that the Manneville-Pomeau map and other related one-dimensional transformations have a unique invariant measure absolutely continuous to Lebesgue, with respect to which the map admits polynomial decay of correlations \cite{GouDecay, HuDecay, LSV}. Additionally, L.S.-Young introduced in \cite{Young-circle} an expanding homeomorphism of the circle with an indiffierent fixed point at the origin that has polynomial decay of correlations for its unique SRB measure. On surfaces, in addition to the examples of diffeomorphis with indifferent fixed points considered in \cite{PSS}, H. Hu constructed a different class of diffeomorphisms with indifferent fixed points called ``almost Anosov'' maps \cite{HuAA}, and demonstrated with X. Zhang in \cite{HuZhang} that these diffeomorphisms also have polynomial decay of correlations. In the category of dissipative diffeomorphisms with hyperbolic attractors, J. Alves and V. Pinheiro constructed in \cite{AP} a nonuniformly hyperbolic solenoid map on $\T^3$ with an indifferent fixed point. They showed that this map admits a Young structure, and used this Young structure to show that the ``solenoid with intermittency'' has polynomial decay of correlations. Finally, S. Burgos in \cite{Burgos} considered a dynamical system with a uniformly hyperbolic attractor, which has a hyperbolic fixed point that can be slowed down to an indifferent fixed point (following the procedure in \cite{CDP}). Using techniques from \cite{PSS} and \cite{Z}, Burgos showed that this map's unique SRB measure has polynomial decay of correlations. This dissipative map studied in \cite{Burgos, CDP, Z} is another example of a strictly nonuniformly hyperbolic diffeomorphism whose indifferent fixed point is induced from the \emph{slow-down procedure}, the procedure introduced in \cite{Kat79} and used in \cite{PSZ17, PSS} to study area-preserving diffeomorphisms. The pseudo-Anosov smooth realizations in \cite{GerbKatPA, VecPAD1} also use a similar slow-down procedure; in this paper, we generalize the procedure from \cite{GerbKatPA} to produce a family of diffeomorphisms with a wide range of ergodic properties, including polynomial decay of correlations for the unique SRB measure. 


 This paper is organized as follows. In Section 2, we introduce the preliminary definitions needed for our main results; in particular, pseudo-Anosov homeomorphisms and the relevant statistical properties. In Section 3, we state our main results. The construction of the diffeomorphism $g$ is given in Section 4, and we show in Section 5 that the resulting map has a Young tower. In Section 6, we study different technical estimates near the singularities of $g$. The tail of the return time is estimated in Sections 7 and 8, and in Section 9, we prove the main result. 

\section{Preliminaries}

\subsection{Pseudo-Anosov maps}\label{subsec:PAHs}

Before we define pseudo-Anosov homeomorphisms and construct their smooth realizations, we briefly discuss measured foliations with singularities. Our exposition is adapted from the presentation in \cite{BaPeNUH}, Section 6.4. For the reader's convenience, we have restated their exposition here. Also see Section 2 of \cite{VecPAD1}. 

\begin{definition}
	A \emph{measured foliation with singularities} is a triple $(\Fs, S, \nu)$, where: 
	\begin{itemize}
		\item $S = \{x_1, \ldots, x_m\}$ is a finite set of points in $M$, called \emph{singularities}; 
		\item $\Fs = \tilde \Fs \uplus \Ss$ is a partition of $M$, where $\Ss$ is a partition of $S$ into points and $\tilde \Fs$ is a smooth foliation of $M \setminus S$;
		\item $\nu$ is a \emph{transverse measure}; in other words, $\nu$ is a measure defined on each curve on $M$ transverse to the leaves of $\tilde \Fs$;
	\end{itemize}
	and the triple satisfies the following properties:
	\begin{enumerate} 
		\item There is a finite atlas of $C^\infty$ charts $\oldphi_k : U_k \to \C$ for $k = 1, \ldots, \ell$, $\ell \geq m$. 
		\item For each $k = 1, \ldots, m$, there is a number $p = p(k) \geq 3$ of elements of $\tilde\Fs$ meeting at $x_k \in S$ (these elements are called \emph{prongs} of $x_k$) such that: 
		\begin{enumerate}[label=(\alph*)]
			\item $\oldphi_k(x_k) = 0$ and $\oldphi_k(U_k) = D_{a_k} := \{z \in \C : |z| \leq a_k\}$ for some $a_k > 0$; 
			\item if $C \in \tilde\Fs$, then the components of $C \cap U_k$ are mapped by $\oldphi_k$ to sets of the form 
			\[
			\left\{z \in \C: \im\left(z^{p/2}\right) = \mathrm{constant} \right\} \cap \oldphi_k(U_k);
			\]
			\item the measure $\nu|U_k$ is the pullback under $\oldphi_k$ of $$\left| \im\left(dz^{p/2}\right)\right| = \left| \im\left(z^{(p-2)/2} dz\right)\right|.$$
		\end{enumerate}
		\item For each $k > m$, we have: 
		\begin{enumerate}[label=(\alph*)]
			\item $\oldphi_k(U_k) = (0, b_k) \times (0,c_k) \subset \R^2 \approx \C$ for some $b_k, c_k > 0$; 
			\item If $C \in \tilde\Fs$, then components of $C \cap U_k$ are mapped by $\oldphi_k$ to lines of the form
			\[
			\{z \in \C : \im \,z = \mathrm{constant}\} \cap \oldphi_k(U_k).
			\] 
			\item The measure $\nu|U_k$ is given by the pullback of $|\im \,dz|$ under $\oldphi_k$. 
		\end{enumerate}
	\end{enumerate}
\end{definition}

A singularity with $p=3$ prongs is shown in Figure 1. 


\begin{remark}
	Henceforth, we refer to the $C^\infty$ curves that are elements of $\Fs$ as ``leaves (of the foliation)''; in particular, despite the technical fact that the singleton sets of singularities $\{x_1\}, \ldots, \{x_k\}$ are elements of $\Fs$, we do not refer to these points when we refer to ``leaves of the foliation''. 
\end{remark}

\begin{figure}
	\centering
	\includegraphics[width=0.6\textwidth]{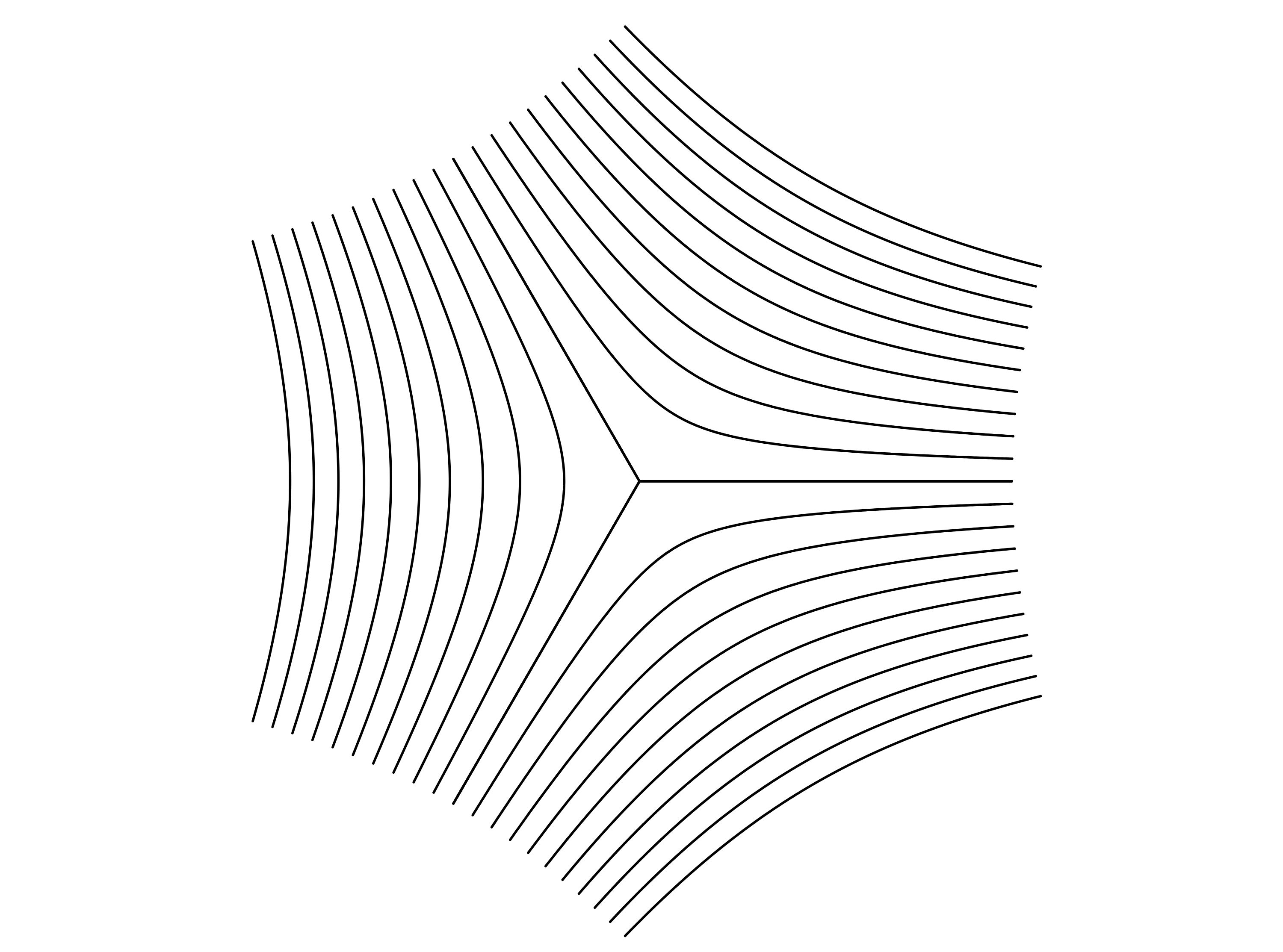}
	\caption{A 3-pronged singularity of a measured foliation with singularities.}
\end{figure}

\begin{definition}\label{PAH-def}
	A surface homeomorphism $f$ of a manifold $M$ is \emph{pseudo-Anosov} if there are measured foliations with singularities $(\Fs^s, S, \nu^s)$ and $(\Fs^u, S, \nu^u)$ (with the same finite set of singularities $S = \{x_1, \ldots, x_m\}$) and an atlas of $C^\infty$ charts $\oldphi_k : U_k \to \C$ for $k = 1, \ldots, \ell$, $\ell > m$, satisfying the following properties: 
	\begin{enumerate}
		\item $f$ is differentiable, except on $S$.
		\item For each $x_k \in S$, $\Fs^s$ and $\Fs^u$ have the same number $p(k)$ of prongs at $x_k$. 
		\item The leaves of $\Fs^s$ and $\Fs^u$ intersect transversally at nonsingular points.
		\item Both measured foliations $\Fs^s$ and $\Fs^u$ are $f$-invariant.
		\item There is a constant $\lambda > 1$ such that 
		\[
		f(\Fs^s, \nu^s) = (\Fs^s, \nu^s/\lambda) \quad \textrm{and} \quad f(\Fs^u, \nu^u) = (\Fs^u, \lambda \nu^u).
		\]
		\item For each $k=1, \ldots, m$, we have $x_k \in U_k$, and $\oldphi_k : U_k \to \C$ satisfies: 
		\begin{enumerate}[label=(\alph*)]
			\item $\oldphi_k(x_k) = 0$ and $\oldphi_k(U_k) = D_{a_k}$ for some $a_k > 0$; 
			\item if $C$ is a curve leaf in $\Fs^s$, then the components of $C \cap U_k$ are mapped by $\oldphi_k$ to sets of the form 
			\[
			\left\{z \in \C: \re\left(z^{p/2}\right) = \mathrm{constant}\right\}\cap D_{a_k};
			\]
			\item if $C$ is a curve leaf in $\Fs^u$, then the components of $C \cap U_k$ are mapped by $\oldphi_k$ to sets of the form 
			\[
			\left\{z \in \C : \im\left(z^{p/2}\right) = \mathrm{constant} \right\} \cap D_{a_k};  
			\]
			\item the measures $\nu^s|U_k$ and $\nu^u|U_k$ are given by the pullbacks of $$\left|\re\left(dz^{p/2}\right)\right| = \left|\re\left(z^{(p-2)/2} dx \right)\right|$$
			and $$ \left|\im\left(dz^{p/2}\right)\right| = \left|\im\left(z^{(p-2)/2} dx \right)\right|$$
			under $\oldphi_k$, respectively. 
		\end{enumerate}
		\item For each $k > m$, we have: 
		\begin{enumerate}[label=(\alph*)]
			\item $\oldphi_k(U_k) = (0, b_k) \times (0,c_k) \subset \R^2 \approx \C$ for some $b_k, c_k > 0$; 
			\item If $C$ is a curve leaf in $\Fs^s$, then components of $C \cap U_k$ are mapped by $\oldphi_k$ to lines of the form
			\[
			\{z \in \C : \re \,z = \mathrm{constant}\} \cap \oldphi_k(U_k);
			\] 
			\item If $C$ is a curve leaf in $\Fs^u$, then components of $C \cap U_k$ are mapped by $\oldphi_k$ to lines of the form
			\[
			\{z \in \C : \im \,z = \mathrm{constant}\} \cap \oldphi_k(U_k);
			\] 
			\item the measures $\nu^s|U_k$ and $\nu^u|U_k$ are given by the pullbacks of $|\re\,dz|$ and $|\im \,dz|$ under $\oldphi_k$, respectively. 
		\end{enumerate}
	\end{enumerate}
	For $k = 1, \ldots, m$, we call the neighborhood $U_k \subset M$ described in part (6) of this definition a \emph{singular neighborhood}, and for $k > m$, we call $U_k$ a \emph{regular neighborhood}. (See Figure 2.)
\end{definition} 

\begin{figure}
	\centering
	\includegraphics[width=0.6\textwidth]{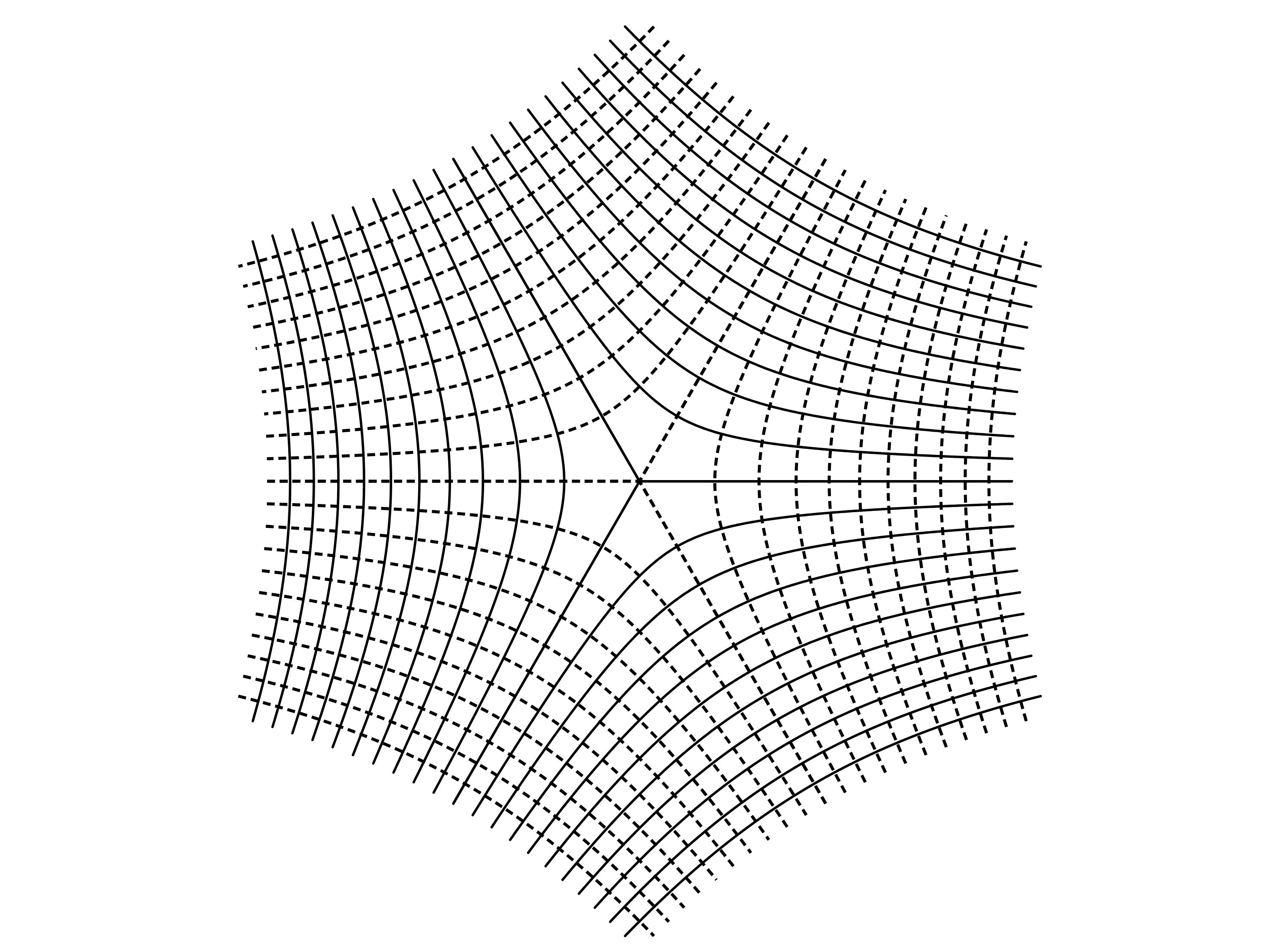}
	\caption{A singular neighborhood with a 3-pronged singularity. The solid lines and broken lines respectively represent the stable and unstable foliations $\Fs^s$ and $\Fs^u$, for example.}
\end{figure}

See Remarks 2.3 and 2.5 of \cite{VecPAD1} for a discussion on some of the technical intuition behind measured foliations and pseudo-Anosov homeomorphisms. 

\begin{proposition}\label{PAH-differential}
	Let $f : M \to M$ be a pseudo-Anosov homeomorphism. For $x \in M \setminus S$, the tangent space decomposes as a direct sum $T_x M = T_x \Fs^s(x) \oplus T_x \Fs^u(x)$, where $\Fs^s(x)$ and $\Fs^u(x)$ represent the curve containing $x$ in the respective foliation. In these coordinates, the differential of $f$ has the diagonal form $$Df_x(\xi^s, \xi^u) = \left( \xi^s/\lambda, \lambda \xi^u\right),$$ where $\xi^s$ and $\xi^u$ are nonzero vectors in $T_x \Fs^s(x)$ and $T_x \Fs^u(x)$, and $\lambda$ is the dilation factor for $f$. 
\end{proposition}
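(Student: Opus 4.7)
The plan is to combine three ingredients: the transversality of the stable and unstable foliations at nonsingular points (yielding the direct sum decomposition), the $f$-invariance of $\Fs^s$ and $\Fs^u$ (forcing $Df_x$ to be diagonal with respect to the splitting), and the invariance equations for the transverse measures $\nu^s$ and $\nu^u$ (identifying the two diagonal entries as $1/\lambda$ and $\lambda$).

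First, since $x \in M \setminus S$, I would pick a regular chart $\oldphi_k : U_k \to \C$ with $k > m$ and $x \in U_k$, as in part (7) of Definition \ref{PAH-def}. In these coordinates the leaves of $\Fs^s$ through points of $U_k$ are the vertical lines $\{\re z = \mathrm{const}\}$ and the leaves of $\Fs^u$ are the horizontal lines $\{\im z = \mathrm{const}\}$. Consequently $T_x \Fs^s(x)$ and $T_x \Fs^u(x)$ are each one-dimensional, and by the transversality condition (3) of Definition \ref{PAH-def} they span $T_x M$, giving the direct sum decomposition. The same argument applied at $f(x) \in M \setminus S$ using a regular chart about $f(x)$ yields the analogous decomposition of $T_{f(x)} M$.

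Next, property (4) of Definition \ref{PAH-def} says that $f$ carries leaves of $\Fs^s$ to leaves of $\Fs^s$, and since $f$ is differentiable at $x$ by property (1), it follows that $Df_x$ maps $T_x \Fs^s(x)$ into $T_{f(x)} \Fs^s(f(x))$; the same holds with $s$ replaced by $u$. Writing $Df_x$ in the regular chart coordinates at $x$ and $f(x)$, this already forces the differential to be diagonal: $Df_x(\xi^s, \xi^u) = (a(x)\xi^s,\, b(x)\xi^u)$ for some scalars $a(x), b(x)$.

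Finally, to pin down $a$ and $b$ I would use the transverse measures. In a regular chart $\nu^u$ pulls back to $|\im\, dz| = |dy|$, so along a stable leaf (a vertical segment in the chart) $\nu^u$ agrees with Euclidean length. For a short stable segment $\tau$ at $x$, the invariance relation $f(\Fs^u, \nu^u) = (\Fs^u, \lambda \nu^u)$ unpacks to $\nu^u(f(\tau)) = \nu^u(\tau)/\lambda$, so the length of $\tau$ contracts by $1/\lambda$ under $f$, yielding $a(x) = 1/\lambda$. The symmetric argument with $\nu^s$ measuring segments along unstable leaves gives $b(x) = \lambda$. The main subtlety here is the correct interpretation of the notation $f(\Fs^u, \nu^u) = (\Fs^u, \lambda \nu^u)$: reading the left side as the pushforward $f_* \nu^u$, one has $\nu^u(f^{-1}(\tau)) = \lambda \nu^u(\tau)$ for every transverse curve $\tau$, which rearranges to the contraction $\nu^u(f(\tau)) = \nu^u(\tau)/\lambda$ used above.
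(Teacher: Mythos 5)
Your argument is essentially the paper's proof: the paper disposes of this proposition by saying it follows from the definition after a calculation in coordinates, and your three steps (transversality of the two foliations for the splitting, $f$-invariance of the leaves for diagonality of $Df_x$, and the transverse-measure relations for identifying the diagonal entries as $1/\lambda$ and $\lambda$) are exactly that calculation, including the correct reading of $f(\Fs^u,\nu^u)=(\Fs^u,\lambda\nu^u)$ as $\nu^u(f(\tau))=\nu^u(\tau)/\lambda$. One small caveat: a nonsingular point $x$ close to a singularity may be covered only by a singular chart $U_k$ with $k\leq m$, so you cannot always ``pick a regular chart containing $x$.'' In that case you run the same computation after composing the singular chart with a branch of $z \mapsto \tfrac{2}{p}z^{p/2}$ on the sector containing $x$ (the map $\Phi_{kj}$ of Section 4), which is a diffeomorphism away from the singularity and turns the leaves into the lines $\{\re z = \mathrm{const}\}$, $\{\im z = \mathrm{const}\}$ and the transverse measures into $\left|\re\,dz\right|$, $\left|\im\,dz\right|$; the rest of your argument then applies verbatim.
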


\begin{proof}
	This follows immediately from the definition of pseudo-Anosov diffeomorphisms after a calculation in coordinates (see Remark 2.5 of \cite{VecPAD1}). 
\end{proof}

\begin{proposition}[\cite{FS79}]\label{PAH measure}
	A pseudo-Anosov surface homeomorphism $f : M \to M$ preserves a smooth invariant probability measure $\nu$ defined locally as the product of $\nu^s$ on $\Fs^u$-leaves with $\nu^u$ on $\Fs^s$-leaves. In any coordinate chart of $M$, this probability measure $\nu$ has a density with respect to the measure induced by the Lebesgue measure on $\R^2$, and this density vanishes at singularities. 
\end{proposition}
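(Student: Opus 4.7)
The plan is to build the measure $\nu$ locally in each chart $(\varphi_k,U_k)$ of Definition \ref{PAH-def} as the product of the transverse measures $\nu^s$ and $\nu^u$, then check compatibility on overlaps, $f$-invariance, and the vanishing of the density at singularities. First, in a regular chart ($k>m$) the stable and unstable leaves are the level sets of $\re z$ and $\im z$ respectively, and $\nu^s|U_k=|\re\,dz|$, $\nu^u|U_k=|\im\,dz|$; the natural product measure is therefore the pullback of Lebesgue measure $dx\,dy$ on $\varphi_k(U_k)\subset\R^2$. So on the nonsingular part $M\setminus S$ the ``product of $\nu^s$ on $\Fs^u$-leaves with $\nu^u$ on $\Fs^s$-leaves'' is unambiguously the smooth area form coming from the atlas, with a strictly positive density.

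Next I treat a singular chart ($k\le m$). Setting $w=z^{p/2}$ (a $(p/2)$-to-$1$ branched cover of $\varphi_k(U_k)$ away from $0$) and writing $w=u+iv$, the stable and unstable leaves become the level sets of $u$ and $v$ respectively, and the transverse measures are $\nu^s=|du|$, $\nu^u=|dv|$ in these $w$-coordinates. Thus the product is the Lebesgue measure $|du\,dv|$ in the $w$-plane, which pulls back under $w=z^{p/2}$ to
\[
\rho_k(z)\,dx\,dy\quad\text{where}\quad \rho_k(z)=\bigl|\tfrac{p}{2}z^{(p-2)/2}\bigr|^2=\tfrac{p^2}{4}|z|^{p-2}.
\]
Since $p\ge 3$, the exponent $p-2\ge 1$, so $\rho_k$ is continuous on $\varphi_k(U_k)$ and vanishes at the singularity $\varphi_k(x_k)=0$. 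This gives the desired density statement. Gluing this local construction to the regular chart definition, one verifies compatibility on overlaps: on $U_j\cap U_k$ both prescriptions describe the product of the globally defined transverse measures $\nu^s$ and $\nu^u$ on the stable and unstable leaves, so the local densities agree up to the (smooth, positive) Jacobian of the transition map. Finally, I normalize to obtain a probability measure, using that $M$ is compact and that the density is bounded.

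The $f$-invariance is then automatic from the scaling relations in part (5) of Definition \ref{PAH-def}: in any chart, for a measurable rectangle $R=\{(a^s,a^u):a^s\in A^s,\ a^u\in A^u\}$ built from stable-unstable coordinates,
\[
\nu(f(R))=\nu^s(f(A^s))\,\nu^u(f(A^u))=\tfrac{1}{\lambda}\nu^s(A^s)\cdot\lambda\,\nu^u(A^u)=\nu(R),
\]
and these rectangles generate the Borel $\sigma$-algebra on each chart, so invariance passes to all Borel sets.

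The main obstacle I anticipate is the bookkeeping on the singular charts: one must check that the formal product ``$\nu^s$ on $\Fs^u$-leaves times $\nu^u$ on $\Fs^s$-leaves'' really does correspond to the pullback of Lebesgue measure from the branched cover $w=z^{p/2}$, and that this description glues to the regular-chart definition despite the non-injectivity of $w$ near $0$. Passing to the $w$-coordinate straightens the singular foliation into a product foliation, which makes both the density computation $\rho_k=(p/2)^2|z|^{p-2}$ and the consistency check on overlap regions (where $p=2$ effectively, i.e.\ the chart is regular) transparent.
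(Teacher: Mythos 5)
Your construction is correct, but note that the paper itself offers no proof of this proposition: it is quoted directly from Fathi--Shub \cite{FS79}, so there is no internal argument to compare against. What you have written is essentially the standard Fathi--Shub/Thurston argument made self-contained: in the singular charts the substitution $w=z^{p/2}$ straightens both foliations, the product of the transverse measures becomes Lebesgue measure $|du\,dv|$ in the $w$-plane, and pulling back gives the density $(p/2)^2|z|^{p-2}$, which is continuous and vanishes at the singularity precisely because $p\ge 3$; the absolute values in $|\re(dz^{p/2})|$, $|\im(dz^{p/2})|$ make everything well defined despite the branch ambiguity of $z^{p/2}$, as you note. Two small points to tighten. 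First, in the invariance computation your labeling of which factor scales by $\lambda$ and which by $\lambda^{-1}$ is arguably reversed relative to Definition \ref{PAH-def}(5) (since $\nu^s$ is the measure on arcs transverse to $\Fs^s$, i.e.\ it measures displacement along unstable leaves, one natural reading gives $\nu^s(f(A))=\lambda\,\nu^s(A)$ and $\nu^u(f(A))=\lambda^{-1}\nu^u(A)$); the product cancellation, and hence the conclusion, is unaffected, but you should fix one convention and state it. Second, when you pass from invariance on product ``rectangles'' to all Borel sets, it is cleaner to say the rectangles form a $\pi$-system generating the Borel $\sigma$-algebra of each chart and the two measures $\nu$ and $f_*\nu$ are finite, so equality on the $\pi$-system suffices; singular charts require restricting to rectangles inside a single sector, where the $w$-coordinate is injective, which your local construction already accommodates.
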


\begin{proposition}[\cite{FS79}]\label{pseudo-Anosov Markov}
	Every pseudo-Anosov homeomorphism of a surface $M$ admits a finite Markov partition of arbitrarily small diameter. The system $(M, f, \nu)$ has the Bernoulli property via the symbolic representation for this Markov partition (see Definition \ref{def:bernoulli} below), where $\nu$ is the measure in the preceding proposition.
\end{proposition}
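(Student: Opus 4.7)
The plan is to prove the two assertions in sequence: first constructing a Markov partition of arbitrarily small diameter, then extracting the Bernoulli property via the symbolic coding. Both are essentially classical results from \cite{FS79} and the Ornstein school, so my proposal is really a sketch of how to adapt the Bowen-style construction to the pseudo-Anosov setting.

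For the Markov partition, I would begin by covering $M$ with finitely many ``rectangles'' whose sides are arcs of leaves of $\Fs^s$ and $\Fs^u$. In a regular neighborhood $U_k$ ($k>m$) a rectangle is simply the preimage under $\oldphi_k$ of an axis-aligned box, so it is a genuine product of stable and unstable arcs. In a singular neighborhood $U_k$ ($k\leq m$), the coordinate sectors cut out by the prongs give $p(k)$ ``half-rectangles'' meeting at $x_k$; the collection of these sectors across all singularities, together with regular rectangles covering the rest of $M$, provides an initial cover by closed sets with pairwise disjoint interiors. The key geometric feature, guaranteed by Proposition \ref{PAH-differential}, is that $f$ uniformly contracts the $\Fs^s$-direction by $\lambda^{-1}$ and expands the $\Fs^u$-direction by $\lambda$, so the standard Bowen construction applies: iterate $f^{\pm N}$ on the initial cover, intersect, and then trim along stable/unstable leaves so that the image of each rectangle under $f$ crosses any other rectangle fully in the unstable direction and is crossed fully in the stable direction. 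Taking $N$ large enough makes the diameter as small as desired.

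For the Bernoulli property, I would push $\nu$ onto the two-sided shift $(\Sigma_A,\sigma)$ associated to the Markov partition via the coding map $\pi:\Sigma_A\to M$. Because the partition is Markov and $\nu$ disintegrates locally as a product of $\nu^s$ on unstable leaves with $\nu^u$ on stable leaves (Proposition \ref{PAH measure}), the lifted measure $\hat\nu=\pi^*\nu$ is a shift-invariant Gibbs/Markov measure on $\Sigma_A$ with summable variation. The next step is to show $(\Sigma_A,\sigma,\hat\nu)$ is mixing: transitivity of $f$ on $M$ passes to irreducibility of the transition matrix $A$, and one checks aperiodicity by using that $f$ has periodic points of all sufficiently large periods (another consequence of pseudo-Anosov dynamics). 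Then the Friedman--Ornstein theorem says that a mixing Markov shift with a Gibbs equilibrium state is measure-theoretically isomorphic to a Bernoulli shift, which transfers back to $(M,f,\nu)$ via $\pi$ since $\pi$ is a measure-theoretic isomorphism off the boundaries of the partition (a $\nu$-null set).

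The main obstacle, and the part that requires genuine care rather than a citation, is the construction of the Markov partition near the $p$-prong singularities: one must verify that the sector-rectangles around a singularity can be trimmed simultaneously by stable and unstable leaves so that the global Markov property holds and no rectangle has a ``pinched'' corner that destroys the product structure. This is handled in \cite{FS79} by working in the branched-cover coordinates $z\mapsto z^{p/2}$ that flatten the prongs, where the foliations become the horizontal/vertical line fields and the singularity becomes an ordinary hyperbolic fixed point; the Bowen argument is then applied upstairs and pushed back down, and because the cover is equivariant under the $\Z/p$-rotation of sectors, the resulting rectangles on $M$ fit together without any ambiguity at the singularities.
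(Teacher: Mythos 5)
You should note at the outset that the paper itself gives no proof of this proposition: it is imported wholesale from \cite{FS79}, so there is no internal argument to match against. Your sketch is, in outline, the classical argument from that source, and I see no fatal gap, but two points deserve adjustment. First, the construction in \cite{FS79} does not run Bowen's shadowing/refinement machinery on an arbitrary rectangle cover; it builds the rectangles directly out of finite arcs of the singular stable and unstable leaves (the prongs), so that every singularity sits at a \emph{vertex} of the resulting rectangles --- exactly the feature this paper relies on later when it chooses the partition $\tilde\Ps$ in Section 5 with singularities on vertices. Relatedly, the chart $z\mapsto z^{p/2}$ is only a local model near each $x_k$, not a global branched covering of $M$, so ``apply Bowen upstairs and push back down'' should be read as a purely local flattening of the sectors; the global Markov property still has to be arranged by the leaf-arc construction. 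Second, on the Bernoulli side your appeal to ``Gibbs measures with summable variation'' is heavier than needed: because $\nu$ is locally the product of $\nu^s$ and $\nu^u$ and $f$ scales $\nu^u$ by exactly $\lambda$, the coded measure on the subshift is an honest Markov measure (indeed the Parry measure), irreducibility and aperiodicity follow from topological mixing of pseudo-Anosov maps (density of leaves), and Friedman--Ornstein then gives the Bernoulli property, transported back to $(M,f,\nu)$ since the coding map is an isomorphism off the $\nu$-null union of leaves through the partition boundaries. With those corrections your route is essentially the one in \cite{FS79}.
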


\subsection{Ergodic properties}

For the reader's convenience, we describe here the thermodynamic and ergodic properties that we will refer to throughout the paper. Throughout the following, $T : X \to X$ will be a measurable and invertible transformation preserving a measure $\mu$ on $X$. 

\begin{definition}\label{def:bernoulli}
	The transformation $(T,\mu)$ has the \emph{Bernoulli property} if there is a Lebesgue space $(Y,\nu)$ for which $(T,\mu)$ is metrically isomorphic to the corresponding Bernoulli shift $\sigma : Y^\Z \to Y^\Z$, where $Y^\Z$ is endowed with the measure $\nu^{\otimes \Z}$. 
\end{definition}

\begin{definition}
	Suppose $\Hs_1$ and $\Hs_2$ are two classes of real-valued functions on $X$ (also called \emph{observables} on $(T,\mu)$). For $h_1 \in \Hs_1$ and $h_2 \in \Hs_2$, the $n^{\textrm{th}}$ \emph{correlation} between the two observables is 
	\[
	\mathrm{Cor}_n(h_1, h_2) := \int h_1(T^n(x)) h_2(x) d\mu(x) - \int h_1 \,d\mu \int h_2\,d\mu. 
	\]
	The transformation $(T,\mu)$ has \emph{exponential decay of correlations} if there is a constant $\gamma_0 > 0$ for which for any $h_1 \in \Hs_1, h_2 \in \Hs_2$, 
	\[
	|\mathrm{Cor}_n(h_1, h_2)| \leq Ce^{-\gamma_0 n},
	\] 
	where $C_0 = C_0(h_1, h_2)$ is independent of $n$.
	
	The transformation $T$ has \emph{polynomial upper} or \emph{lower bound on correlations} with respect to $\Hs_1$ and $\Hs_2$ if, respectively, there is a number $\gamma_1 > 0$ for which for any $h_1 \in \Hs_1, h_2 \in \Hs_2$, 
	\[
	|\mathrm{Cor}_n(h_1, h_2)| \leq C_1 n^{-\gamma_1};
	\]
	or, if there is a number $\gamma_2 > 0$ for which for any $h_1 \in \Hs_1, h_2 \in \Hs_2$, 
	\[
	|\mathrm{Cor}_n(h_1, h_2)| \geq C_2 n^{-\gamma_2},
	\]
	where in each case $C_1$ and $C_2$ are constants independent of $n$ (but may depend on $h_1, h_2$). 
\end{definition}

\begin{definition}
	The system $(T,\mu)$ satisfies the \emph{Central Limit Theorem (CLT)} with respect to a class $\Hs$ of observables if there is a $\sigma > 0$ such that for any $h \in \Hs$ with $\int h \,d\mu = 0$, we have
	\[
	\mu \left\{ x \in X : \frac{1}{\sqrt n} \sum_{i=0}^{n-1} \left(h(T^i(x)) - \int h \,d\mu\right)< t\right\} \xrightarrow{n \to \infty} \frac{1}{\sigma\sqrt{2\pi}} \int_{-\infty}^{t} e^{-\tau^2/2\sigma^2}\,d\tau.
	\]
\end{definition}

\begin{definition}
	The transformation $(T,\mu)$ has \emph{polynomial large deviation} with respect to a class $\Hs$ of observables on $X$ if there is a $\beta > 0$ such that for any $h \in \Hs$, any $\epsilon > 0$, and any sufficiently large $n > 0$, we have \[
	\mu\left\{x \in X : \left| \frac 1 n \sum_{i=0}^{n-1} h \left(T^i(x)\right) - \int h \, d\mu\right| > \epsilon  \right\} < Kn^{-\beta},
	\]
	where $K = K(h,\epsilon) > 0$ is a constant independent of $n$.
\end{definition}

\section{Main results}

We now state our main result. The exponents $\gamma_1, \gamma_2$ for the polynomial decay depend on parameters related to the slowdown procedure for the pseudo-Anosov homeomorphism $f$. The specific values are $\gamma_1 = \gamma' - 2$ and $\gamma_2 = \gamma - 2$, where $\gamma$ and $\gamma'$ are given in \eqref{gamma-defs}. 

\begin{theorem}\label{Main 1}
	Let $f : M \to M$ be a pseudo-Anosov homeomorphism of a compact orientable Riemannian surface $M$. There is a $C^{2+\epsilon}$ diffeomorphism $g : M \to M$, $\epsilon > 0$ depending on $f$, that is topologically conjugate and $C^0$-close to $f$. Furthermore, there are numbers $\beta > 0$, $\eta > 0$, and $\gamma_2 > \gamma_1 > 0$ for which the map $g$ also satisfies the following properties: 
	\begin{enumerate}[label=\emph{(\arabic*)}]
		\item $g$ preserves a probability measure $\mu_1$ that is equivalent to the Riemannian area of $M$. 
		\item $g$ has nonzero Lyapunov exponents at $m$-a.e. $x$. 
		\item $g$ has the Bernoulli property with respect to $m$. 
		\item $g$ has polynomial upper and lower bounds on the correlations with respect to $m$ and the set of $\eta$-H\"older continuous functions for some $\eta > 0$. More precisely: 
		\begin{enumerate}[label=\emph{(\alph*)}]
			\item for any $h_i \in C^\eta$, $i = 1,2$, $$\mathrm{Cor}_n(h_1, h_2)| \leq C_1 n^{-\gamma_1},$$ where $C_1 = C_1(\|h_1\|_{C^\eta}, \|h_2\|_{C^{\eta}})$; 
			\item there is a nested sequence of subsets $\{M_j\}_{j \geq 1}$ that exhausts $M$ for which if $h_1, h_2 \in C^\eta$ are such that $\int h_1 \,dm \int h_2\,dm > 0$ and $\mathrm{supp}(h_i)\subset M_j$ for some $j$, for $i = 1,2$,
			\[
			|\mathrm{Cor}_n(h_1, h_2)| \geq C_2 n^{-\gamma_2},
			\]
			where $C_2 = C_2(\|h_1\|_{C^\eta}, \|h_2\|_{C^\eta})$. 
		\end{enumerate}
		\item $g$ satisfies the CLT with respect to the class of observables $C^\eta_0 := \{h \in C^\eta : \int h \,dm = 0\}$, with $\sigma = \sigma(h)$ given by 
		\[
		\sigma^2 = -\int h^2\,dm + 2\sum_{n=0}^\infty \int h \cdot(h \circ f^n)\,dm
		\]
		where $\sigma > 0$ iff $h$ is not cohomologous to zero (i.e., $h \circ f \neq h' \circ f - h'$ for any measurable function $h'$).
		\item $g$ has polynomial large deviations with respect to the class $C^\eta$ of observables with the constant $K = K(\|h\|_{C^\eta})\epsilon^{-2\beta}$. Furthermore, for an open and dense subset of observables in $C^\eta$ and sufficiently small $\epsilon > 0$, 
		\[
		n^{-\beta} < m \left( \left| \frac 1 n \sum_{j=0}^{n-1} h(f^j(x)) - \int h \, dm\right| > \epsilon\right)
		\]
		for infinitely many $n$; 
		\item $g$ has a unique measure of maximal entropy, with respect to which $g$ has the Bernoulli property, nonzero Lyapunov exponents almost everywhere, exponential decay of correlations, and the Central Limit Theorem with respect to H\"older observables. 
	\end{enumerate}
\end{theorem}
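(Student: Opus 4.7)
The plan is to follow the Young tower strategy of \cite{PSS, Young}, but with the flexibility in the slow-down parameter built into the construction itself rather than imposed by the pseudo-Anosov foliation structure. First, I would construct $g$ explicitly: starting from the pseudo-Anosov homeomorphism $f$, pass to an iterate so that all singularities are fixed, write $f$ near each singularity as the time-one map of a vector field adapted to the prong coordinates, slow this vector field down by a factor depending on a parameter $\alpha \in (0,1)$ that can be chosen independently of the number of prongs $p$, and then conjugate by a radial homeomorphism that redistributes mass to recover invariance of a smooth density (this is where the added flexibility over \cite{GerbKatPA} enters). Standard arguments, essentially as in \cite{Kat79, GerbKatPA, VecPAD1}, then give that $g$ is a $C^{2+\epsilon}$ diffeomorphism topologically conjugate to $f$, preserves a measure $\mu_1$ equivalent to Riemannian area, has nonzero Lyapunov exponents $\mu_1$-a.e., and is Bernoulli, establishing parts (1)--(3). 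Part (7) on the measure of maximal entropy is independent of the slow-down and is imported directly from \cite{VecPAD1}.

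The core of the theorem is parts (4)--(6), and for these I would construct a Young tower. Choose a Markov rectangle $\Lambda$ (from Proposition \ref{pseudo-Anosov Markov}) bounded away from the singular set $S$, and use a first-return partition $\{\Lambda_i\}$ with return times $R_i$ to endow $g|_\Lambda$ with a Young structure in the sense of \cite{Young}. Hyperbolicity and bounded distortion on $\Lambda$ are inherited from the original linear pseudo-Anosov map since the slow-down is localized near $S$. Once the tower is built, parts (4a), (5), (6) and the upper bound in (4b) follow from Young's general theorems \cite{Young} together with Melbourne--Nicol-type large deviation results, provided I can establish a polynomial tail estimate
\[
\mu_1\bigl\{R > n\bigr\} \leq C\, n^{-\gamma'+1},
\]
with a matching lower bound for points lying in the nested exhaustion $\{M_j\}$ of (4b).

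The tail estimate is where I expect the main difficulty, and it is the step that drives the exponents $\gamma_1 = \gamma'-2$, $\gamma_2 = \gamma - 2$ of \eqref{gamma-defs}. The idea is classical: a trajectory has large return time $R(x) \geq n$ essentially because $g^k(x)$ spends a long time in a small neighborhood $V_r$ of some singularity $x_\ell$, and the time spent inside $V_r$ before escaping is governed by the slowed-down ODE. Passing to the local coordinates around $x_\ell$, integrating the slowed vector field, and inverting the relation between escape time and initial distance to the singularity yields that the set of points entering $V_r$ and not escaping in $n$ steps has radius $\sim n^{-1/(p\alpha)}$ or similar, which when combined with the Riemannian area and the invariance properties of $\mu_1$ produces the polynomial tail. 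The obstacle relative to \cite{PSS} is that the singularities have $p \geq 3$ prongs rather than the smooth torus geometry, so the local area estimate in the coordinate charts $\varphi_k$ must use the transverse measures $\nu^s, \nu^u$ from Definition \ref{PAH-def}(6), and the slow-down exponent $\alpha$ must be tracked carefully through the $p$-dependent change of coordinates $z \mapsto z^{p/2}$. The lower bound in (4b) is obtained by restricting to observables supported in an $M_j$ that avoids a neighborhood of $S$ large enough that one has a genuine lower bound on the return-time tail, which comes from exhibiting explicit cylinders of points that linger near a given singularity for exactly $n$ steps; the nested exhaustion $M_j$ is then generated by shrinking this excluded neighborhood. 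Finally, substituting the tail exponents into Young's estimates yields the stated $\gamma_1, \gamma_2$ and completes the theorem.
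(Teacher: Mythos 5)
Your outline matches the paper's strategy (slow-down construction, Young tower with first returns to a Markov rectangle away from the singularities, polynomial bounds on the tail of the return time, then general tower theorems), but two essential steps are missing, and both are where the real work lies. First, your upper bound on the tail is argued as if $\tau(x)>n$ occurs ``essentially because'' the orbit lingers in a small neighborhood of a singularity, so that it suffices to estimate the measure of a trap region of radius $\sim n^{-1/(p\alpha)}$. This is not enough: the return time decomposes as $k$ steps inside $\Us_0$ and $n-k$ steps wandering outside it, and the number of itineraries (equivalently, of $s$-sets $\Lambda_i^s$) with a long excursion outside $\Us_0$ grows exponentially in $n-k$. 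The paper's upper bound (Lemma \ref{tail upper bound}) only closes because of the counting estimate of Lemma \ref{lem:PSS8.1}, $\#\Ss_{k,n,q}\leq C_8 q^{-2}e^{(h+\epsilon_0)(n-k)}$ with $h<\htop(g)=\log\lambda$ (an entropy-gap statement obtained by forbidding the symbols adjacent to the singularities), which is then balanced against the per-element measure bound $\mu_1(\Lambda_i^s)\lesssim k^{-\gamma'}e^{(-\log\lambda+\epsilon_0)(n-k)}$ of Lemma \ref{lem:PSS8.2}; summing over $k$ and $q$ gives $\mu_1(\tau>n)\lesssim n^{-(\gamma'-1)}$. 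Without this combinatorial control your area estimate bounds only the measure of points with long sojourns near $S$, not the measure of $\{\tau>n\}$. (Also, the measure estimates are run along stable curves via the polynomial length-ratio Lemma \ref{lem:length ratio is polynomial} and the product structure of $\mu_1$, not via a direct area computation in the charts, and the exponent you quote does not match the $s_2(0)^{-2\alpha}$ escape-time scaling of the slowed ODE.)

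Second, the lower bound in (4b) cannot be obtained by ``substituting the tail exponents into Young's estimates'': Young-type theorems give upper bounds on correlations, and a lower bound on the tail by itself yields nothing about $\mathrm{Cor}_n$. The paper instead invokes the asymptotic expansion of Proposition \ref{prop:dark-tower}(b) (from \cite{SZ-inducing, PSZTowers}), $\mathrm{Cor}_n(h_1,h_2)=\sum_{k>n}\mu_1(\tau>k)\int h_1\,d\mu_1\int h_2\,d\mu_1+O(R_{\gamma'-1}(n))$ for observables supported in the sets $M_j$ (projections of the first $j$ tower levels, not complements of neighborhoods of $S$), bounds the main term below by $n^{-(\gamma-2)}$ using the tail lower bound (Lemmas \ref{time bound near singularities} and \ref{tail lower bound}, which your ``explicit cylinders'' idea does capture), and then must verify that the error term is of strictly lower order; this forces the case analysis $\gamma'>3$ versus $2<\gamma'<3$ and the parameter restrictions $\alpha<\tfrac14$ and $\mu<\tfrac12$ ensuring $\gamma-2<\gamma'-1$, respectively $\gamma-2<2\gamma'-4$. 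Your proposal neither invokes such an asymptotic formula nor performs this exponent comparison, so as written the lower bound on correlations does not follow. The remaining items are fine in outline: (1)--(3) and (7) go through as you say (the paper proves Bernoullicity by pushing $\mu_1$ to the unique SRB measure of $f$ via the conjugacy and citing \cite{FS79, RH2TU}), and (5), (6) follow from summability of correlations plus \cite{Liv95} and from \cite{MN08} once the tail bounds are in place.
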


\section{Construction of the smooth pseudo-Anosov model}

As shown in Section 2.4 of \cite{GerbKatPA}, pseudo-Anosov homeomorphisms as we've defined them are not smooth at the singularities. We construct a smooth realization of the pseudo-Anosov map, adapted from the procedures in both \cite{GerbKatPA} and \cite{Kat79}. The resulting map $g : M \to M$ will be a $C^{2+\epsilon}$ diffeomorphism whose differential at the singularities is the identity. 

Before proceeding with the construction, we point out that some literature refers to the maps defined in Definition \ref{PAH-def} as ``pseudo-Anosov diffeomorphisms'', despite the fact that these maps are not differentiable at the singularities. To avoid any confusion, we reserve the word ``diffeomorphism'' only for those maps that are differentiable on all of $M$, and use the phrase ``pseudo-Anosov homeomorphism'' for the maps described in Definition \ref{PAH-def}. 

\subsection{Construction of $g$}\label{sec:construction}

Let $x_k$ be a singularity of $f$, let $p = p(x_k)$ be the number of prongs at this singularity, and let $\oldphi_k : U_k \to \C$ be the chart described in part (6) of Definition (\ref{PAH-def}). The \emph{stable} and \emph{unstable prongs} at $x_k$ are the leaves $P^s_{kj}$ and $P^u_{kj}$, $j = 0, \ldots, p-1$ of $\Fs^s$ and $\Fs^u$, respectively, whose endpoints meet at $x_k$. Locally, they are given by:
\begin{align*}
P^s_{kj} &= \oldphi_k^{-1} \left\{\rho e^{i\tau} : 0 \leq \rho < a_k, \: \tau = \frac{2j+1}p \pi \right\}, \\
\textrm{and}\quad P^u_{kj} &= \oldphi_k^{-1} \left\{\rho e^{i\tau} : 0 \leq \rho < a_k, \: \tau = \frac{2j}p \pi \right\}.
\end{align*}
Since $f : M \to M$ is a homeomorphism, $f$ permutes the singularities. Therefore, after taking a suitable iterate, assume the singularities are fixed points, and moreover, assume $f(P^s_{kj}) \subseteq P^s_{kj}$ for all $j = 0, \ldots, p-1$. Furthermore, we define the \emph{stable} and \emph{unstable sectors} at $x_k$ to be the regions in $U_k$ bounded by the stable (resp. unstable) prongs: 
\begin{align*}
S^s_{kj} &= \oldphi_k^{-1} \left\{\rho e^{i\tau} : 0 \leq \rho < a_k, \: \frac{2j-1}p \pi \leq \tau \leq \frac{2j+1}p \pi \right\}, \\
\textrm{and}\quad S^u_{kj} &= \oldphi_k^{-1} \left\{\rho e^{i\tau} : 0 \leq \rho < a_k, \: \frac{2j}p \pi \leq \tau \leq \frac{2j+2}p \pi \right\}.
\end{align*}
Assume, after taking a suitable iterate, that $f(S_{kj}^u) \subset S_{kj}^u$ and $f(S_{kj}^s) \subset S_{kj}^s$. 

Our strategy will be to apply a ``slow-down'' of the trajectories in each stable sector $S^s_{kj}$, followed by a change of coordinates ensuring the resulting diffeomorphism $g$ preserves the measure induced by a convenient Riemannian metric. 

Let $F : \C \to \C$ be the map $s_1 + is_2 \mapsto \lambda s_1 + is_2/\lambda$. Note $F$ is the time-1 map of the vector field $V$ given by 
\begin{equation}\label{eq:linear-vector-field}
\dot{s}_1 = (\log\lambda)s_1, \quad \dot{s}_2 = -(\log\lambda) s_2.
\end{equation}
Let $0 < \rho_1 < \rho_0 < \min\{a_1, \ldots, a_\ell\}=:a^*$, and define $ r_0$ and $ r_1$ by $r_j = (2/p)\rho^{p/2}_j$ for $j=0,1$ and for each $p = p(k)$. Also let $\tilde a = (2/p)(a^*)^{p/2}$. Assume $\rho_0, \rho_1$ are chosen so that
\begin{equation}\label{eq:F-neighborhood-containments}
	D_{r_1} \subset F(D_{r_0}), \quad F(D_{r_1}) \cup F^{-1}(D_{r_0}) \subset D_{\tilde a}.
\end{equation}
We also assume $\rho_0$ is chosen to be small enough so that the open neighborhood $\mathcal U_0 := \union_{k=1}^m \oldphi_k^{-1}\left(D_{\rho_0}\right)$ of the set $S$ of singularities is disjoint from the open set $\union_{k={m+1}}^\ell \oldphi_k^{-1}\left(D_{a_k}\right) = \union_{k=m+1}^\ell U_k$. 

Let $\alpha \in (0,1)$ be a uniform constant. For each $p$-pronged singularity, define a ``slow-down'' function $\Psi_p = \Psi_{p,\alpha}$ on the interval $[0, \infty)$ so that: 
\begin{enumerate}[label=(\arabic*)]
	\item $\Psi_{p}(u) = \left( \frac p 2 \right)^{2\alpha}u^{\alpha}$	for $u \leq r_1^2$; 
	\item $\Psi_{p}$ is $C^\infty$ except at $0$; 
	\item $\dot\Psi_p(u) \geq 0$ for $u > 0$; 
	\item $\Psi_p(u) = 1$ for $u \geq r_0^2$.
\end{enumerate}
Consider the vector field $\hat V_{p}$ on $D_{ r_0} \subset \C$ defined by 
\begin{equation}\label{eq:slowdown vector field}
\dot s_1 = (\log \lambda) s_1 \Psi_{p} \left(s_1^2 + s_2^2\right) \quad \textrm{and} \quad \dot s_2 = -(\log \lambda) s_2 \Psi_{p}\left(s_1^2 + s_2^2\right).
\end{equation}
Let $G_{p}$ be the time-1 map of the vector field $\hat V_{p}$. Assume $\rho_1$ is chosen to be small enough so that $G_p = F$ on a neighborhood of the boundary of $D_{r_0}$.
%
%

Now let $\phi : \C\to \C$ be given by
\begin{equation}\label{eq:mass-push-coord-change}
\phi(z) = A\left(\int_0^{|z|^2} \frac{du}{\Psi_p(u)}\right)^{p/4}\frac{z}{|z|},
\end{equation} 
where $A > 0$ is defined by 
\begin{equation}\label{eq:A-def}
A = \left( (1-\alpha)\left( \frac p 2 \right)^{2\alpha}\right)^{p/4}.
\end{equation}
In particular, observe that for $0 < |z| < r_1$, we have $\Psi_p(u) = \left( \frac p 2 \right)^{2\alpha} u^\alpha$, and so for $|z|=r<r_1$: 
\begin{equation}\label{eq:mass-push-near-0}
\phi(re^{i\theta}) = A\left( \int_0^{r^2} \left( \frac p 2 \right)^{-2\alpha} u^{-\alpha} \,du\right)^{p/4} e^{i\theta} = r^{p(1-\alpha)/2}e^{i\theta}.
\end{equation}
Therefore near 0, denoting $\tilde r e^{i\tilde{\theta}} = \phi(re^{i\theta})$, the coordinates $(r,\theta)$ and $(\tilde r, \tilde\theta)$ are related by 
\[
\tilde r = r^{p(1-\alpha)/2}, \quad \textrm{and} \quad \tilde\theta = \theta.
\]

For each singularity $x_k$, let $\tilde a_k = (2/p) a_k^{p/2}$, and define the coordinate change $\Phi_{kj} : \oldphi_k S^s_{kj} \to \left\{z : \re z \geq 0 \right\} \cap D_{\tilde a_k}$ by 
\begin{equation}\label{eq:sector-coord-map}
\Phi_{kj}(z) = \Phi_{kj}(\rho e^{i\tau}) = (-1)^j\frac 2 p z^{p/2} = \frac 2 p \rho^{p/2} e^{i\tau \frac p 2 + ij\pi} = \tilde r e^{i\tilde\theta}.
\end{equation}
Observe, therefore, that the coordinates $(\rho, \tau)$ and $(\tilde r, \tilde \theta)$ are related by 
\[
\rho = \left( \frac p 2 \tilde r\right)^{2/p} \quad \textrm{and} \quad \tau = \frac 2 p \tilde{\theta} - \frac{2j\pi}p 
\]

Define $g : M \to M$ by $g(x) = f(x)$ for $x \not\in \Us_0$ and meanwhile for $1 \leq k \leq m$, $1 \leq j \leq p(k)$, define $g$ on each sector $S^s_{kj}\cap\oldphi_k^{-1}\left(D_{\rho_0}\right)$ by 
\begin{equation}\label{eq:g-def}
g = \left(\phi^{-1} \circ \Phi_{kj} \circ \oldphi_k\right)^{-1} \circ G_p\circ \left( \phi^{-1}\circ \Phi_{kj} \circ\oldphi_k\right).
\end{equation}
Note that $g = f$ in $\oldphi_k^{-1} \left( D_{a_k} \setminus D_{\rho_0}\right)$, and therefore it follows from \eqref{eq:F-neighborhood-containments} that
\begin{equation}\label{eq:g-neighborhood-containments}
	\oldphi_k^{-1}\left(D_{\rho_1}\right) \subset g\left( \oldphi_k^{-1}(D_{\rho_0})\right), \quad g\left(\oldphi_k^{-1}(D_{r_1})\right) \cup g^{-1}\left(\oldphi_k^{-1}(D_{r_0})\right) \subset \oldphi_k^{-1}\left(D_{\tilde a}\right).
\end{equation}
\begin{remark}
	In the original smooth pseudo-Anosov realization constructed in \cite{GerbKatPA}, the exponent they chose is $\alpha = (p-2)/p$, in which case one can compute $\phi = \mathrm{id}$. 
\end{remark}

\subsection{Smoothness and area invariance of $g$}

We now show that $g$ is a $C^{2+\epsilon}$ diffeomorphism on $M$ and preserves a smooth invariant measure. Let $x_k \in M$ be a singularity of $g$. Consider the vector field $V$ given by \eqref{eq:linear-vector-field} defined on $D_{r_1} = (\phi^{-1} \circ \Phi_{kj})(D_{\rho_1})$, and let $\Omega = ds_1 \wedge ds_2 = rdr\wedge d\theta$ be the Lebesgue area form. Observe that $V$ is Hamiltonian with respect to $\Omega$, with Hamiltonian function $H(s_1, s_2) = s_1 s_2 \log\lambda$. Define the area form $\hat\Omega_p$ by
\[
(\hat\Omega_p)_{(s_1, s_2)} = \frac{ds_1 \wedge ds_2}{\Psi_p(s_1^2 + s_2^2)} = \frac{rdr\wedge d\theta}{\Psi_p(r^2)}.
\]
Note the vector field $\hat V_p$ defined by \eqref{eq:slowdown vector field} is Hamiltonian with respect to $\hat\Omega_p$, with Hamiltonian function $H$. Finally let $V_p$ be the (continuous) vector field on $D_{a_k} \subset \C$ given by $(\Phi_{kj}^{-1} \circ \phi)_* \hat V_p$, and let $\Omega_p = (\phi^{-1} \circ \Phi_{kj})^* \hat\Omega_p$. Note $V_p$ is Hamiltonian with respect to $\Omega_p$, with Hamiltonian function $H_p := H \circ \phi^{-1} \circ \Phi_{kj}$. 

\begin{lemma}\label{lem:lebesgue-near-sings}
	Near the origin, $\Omega_p$ is a constant times Lebesgue area in $D_{a_k}$.
\end{lemma}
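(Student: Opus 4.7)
The plan is to carry out an explicit change-of-variables calculation in polar coordinates. Near the origin, the region $D_{r_0}$ on which $\hat V_p$ is defined carries the form $\hat\Omega_p = \frac{r\,dr\wedge d\theta}{\Psi_p(r^2)}$, and inside the disk of radius $r_1$ condition (1) of the definition of $\Psi_p$ gives $\Psi_p(r^2) = (p/2)^{2\alpha} r^{2\alpha}$, so the form simplifies to $\hat\Omega_p = (2/p)^{2\alpha}\,r^{1-2\alpha}\,dr\wedge d\theta$. This is the expression I will pull back through $\phi^{-1}\circ\Phi_{kj}$.

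Next I would write down the coordinate change $\phi^{-1}\circ\Phi_{kj}$ explicitly in polar form. From \eqref{eq:sector-coord-map}, $\Phi_{kj}$ sends $(\rho,\tau)$ to $(\tilde r,\tilde\theta)=((2/p)\rho^{p/2},\,(p/2)\tau+j\pi)$; from \eqref{eq:mass-push-near-0}, $\phi(re^{i\theta})=r^{p(1-\alpha)/2}e^{i\theta}$ for $r<r_1$, so $\phi^{-1}(\tilde r e^{i\tilde\theta})=\tilde r^{2/(p(1-\alpha))}e^{i\tilde\theta}$ near the origin. Composing gives the map $(\rho,\tau)\mapsto(r,\theta)$ with
\[
r = C\,\rho^{1/(1-\alpha)}, \qquad \theta = (p/2)\tau+j\pi, \qquad C=(2/p)^{2/(p(1-\alpha))}.
\]
Differentiating, $dr=\tfrac{C}{1-\alpha}\rho^{\alpha/(1-\alpha)}d\rho$ and $d\theta=(p/2)\,d\tau$.

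Substituting into $\hat\Omega_p$, the combined exponent on $\rho$ becomes $\frac{1-2\alpha}{1-\alpha}+\frac{\alpha}{1-\alpha}=1$, so every non-integer power collapses and what remains is a constant times $\rho\,d\rho\wedge d\tau$. Since $\rho\,d\rho\wedge d\tau$ is exactly the Lebesgue area element on $D_{a_k}$ in polar coordinates, this proves that $\Omega_p=(\phi^{-1}\circ\Phi_{kj})^*\hat\Omega_p$ is a constant multiple of Lebesgue area on a neighborhood of the origin in $D_{a_k}$, with constant $\frac{p}{2(1-\alpha)}(2/p)^{2\alpha}C^{2-2\alpha}$.

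The main ``obstacle'' is just careful bookkeeping of exponents and verifying the domain on which the simplification is valid: the formula $\phi(re^{i\theta})=r^{p(1-\alpha)/2}e^{i\theta}$ and the identification $\Psi_p(r^2)=(p/2)^{2\alpha}r^{2\alpha}$ both require $r<r_1$, which corresponds via the coordinate change to a neighborhood of $0$ inside $D_{a_k}$; that is exactly the ``near the origin'' qualifier in the statement. The cancellation of exponents to give the exponent $1$ is not a coincidence: it is built into the choice of exponent $p/4$ and constant $A$ in \eqref{eq:mass-push-coord-change}, \eqref{eq:A-def}, which were designed precisely so that the $\phi$-push-forward of $\hat\Omega_p$ lifts through $\Phi_{kj}$ to Lebesgue on $D_{a_k}$.
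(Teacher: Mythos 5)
Your proposal is correct and follows essentially the same route as the paper: an explicit polar-coordinate pullback of $\hat\Omega_p = \frac{r\,dr\wedge d\theta}{\Psi_p(r^2)}$ through $\phi^{-1}\circ\Phi_{kj}$ using \eqref{eq:mass-push-near-0} and \eqref{eq:sector-coord-map}, with the exponents on $\rho$ collapsing to $1$. Your final constant $\frac{p}{2(1-\alpha)}(2/p)^{2\alpha}C^{2-2\alpha}$ agrees with the paper's $\frac{1}{1-\alpha}\left(\frac p2\right)^{1-\frac4p-2\alpha}$, so the bookkeeping checks out.
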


\begin{proof}
	Note that for $\rho > 0$ sufficiently small, the function $\left(\phi^{-1} \circ \Phi_{kj}\right)(\rho e^{i\tau}) = r e^{i\theta}$ satisfies 
	\begin{equation}\label{eq:rho-r-coord-change}
		\begin{aligned}
	re^{i\theta} &= \left(\phi^{-1} \circ \Phi_{kj}\right)(\rho e^{i\tau}) \\
	&= \phi^{-1} \left( \frac 2 p \rho^{p/2} e^{i\tau\frac p 2 + i j\pi}\right) \\
	&=\left( \frac 2 p \right)^{2/p(1-\alpha)}\rho^{1/(1-\alpha)} e^{i\tau \frac p 2 +ij\pi},
	\end{aligned}
	\end{equation}
	and so the coordinates $(r,\theta)$ and $(\rho, \tau)$ are related by 
	\[
	r = \left( \frac 2 p \right)^{2/p(1-\alpha)}\rho^{1/(1-\alpha)}, \quad \theta = \frac p 2 \tau + j\pi.
	\]
	It follows that: 
	\[
	dr = \frac{1}{1-\alpha} \left( \frac 2 p \right)^{2/p(1-\alpha)} \rho^{\alpha/(1-\alpha)} d\rho \quad \textrm{and} \quad d\theta = \frac p 2 d\tau.
	\]
	So, since in polar coordinates we can write $\hat\Omega_p = \frac{1}{\Psi_p(r^2)} rdr\wedge d\theta$, for $\rho e^{i\tau}$ sufficiently near 0, we have:
	\begin{equation}\label{eq:lebesgue-preserving-last-step}
	\begin{aligned}
	\Omega_p &= \left( \phi^{-1} \circ \Phi_{kj}\right)^* \hat\Omega_p \\
	&= \left( \phi^{-1} \circ \Phi_{kj}\right)^* \left( \frac{rdr\wedge d\theta}{\Psi_p(r^2)}\right) \\
	&= \frac{1}{\Psi_p\left( \left( \frac 2 p \right)^{4/p(1-\alpha)} \rho^{2/(1-\alpha)}\right)} \times \left( \left( \frac 2 p \right)^{2/p(1-\alpha)}\rho^{1/(1-\alpha)}\right)\\
	&\quad \times \left(\frac{1}{1-\alpha} \left( \frac 2 p \right)^{2/p(1-\alpha)} \rho^{\alpha/(1-\alpha)} d\rho\right) \wedge \left( \frac p 2 d\tau\right)\\
	&= \left(\left( \frac p 2 \right)^{-2\alpha}  \left( \frac p 2 \right)^{\frac{4\alpha}{p(1-\alpha)}} \rho^{-\frac{2\alpha}{1-\alpha}}\right) \times \left(  \left( \frac p 2 \right)^{-\frac 2{p(1-\alpha)}} \rho^{\frac 1{1-\alpha}}\right)\\
	&\quad \times \left( \frac 1{1-\alpha} \left( \frac p 2 \right)^{1-\frac{2}{p(1-\alpha)}} \rho^{\frac{\alpha}{1-\alpha}}\right) d\rho \wedge d\tau \\
	&= \frac 1{1-\alpha} \left( \frac p 2 \right)^{1-\frac 4 p - 2\alpha} \rho d\rho \wedge d\tau.
	\end{aligned}
	\end{equation}
	Since $\rho d\rho \wedge d\tau$ is the Lebesgue area in $D_{a_k}$, we've proven the lemma. 
\end{proof}

\begin{remark}
	In the original smooth pseudo-Anosov realization constructed in \cite{GerbKatPA}, the exponent they chose is $\alpha = (p-2)/p$, in which case one can compute that the constant in front of $\rho d\rho \wedge d\tau$ in the final equality of \eqref{eq:lebesgue-preserving-last-step} is 1, and the area is precisely Lebesgue area. 
\end{remark}

Recall $V_p = (\Phi_{kj}^{-1} \circ \phi)_* \hat V_p$, where $\hat V_p$ is given by \eqref{eq:slowdown vector field}. Note $\diver_{\Omega} V = 0$, and it follows that $\diver_{\hat{\Omega}_p} \hat V_p = 0$, and so $\diver_{\Omega_p} V_p = 0$ in a neighborhood of each singularity. Since $g$ is the time-1 map of $V_p$ on $M$, one can use a partition of unity on $(U_k, \oldphi_k)_{1 \leq k \leq \ell}$ and the coordinate representation of $g$ in each chart to prove:

\begin{proposition}\label{prop:SRB}
	The map $g : M \to M$ preserves a smooth invariant measure $\mu_1$ that is equivalent to the Riemannian area on $M$.
\end{proposition}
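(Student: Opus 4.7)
The plan is to produce a smooth positive $2$-form $\omega$ on $M$ preserved by $g$, from which $\mu_1 := \omega/\omega(M)$ is the desired probability measure equivalent to the Riemannian area.

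The local pieces are in hand. In each singular chart $(U_k,\oldphi_k)$ for $k = 1,\ldots,m$, the Hamiltonian structure of $\hat V_p$ with respect to $\hat\Omega_p$ gives $\diver_{\hat\Omega_p}\hat V_p = 0$, so $G_p^*\hat\Omega_p = \hat\Omega_p$; conjugating by $\phi^{-1}\circ\Phi_{kj}\circ\oldphi_k$, the map $g$ preserves the pulled-back form $\Omega_p$ on each sector. Lemma \ref{lem:lebesgue-near-sings} shows that $\Omega_p$ is equivalent to the coordinate Lebesgue area near each singularity, so it extends smoothly across $x_k$, yielding a smooth positive $g$-invariant $2$-form $\omega_k$ on all of $U_k$. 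For each regular chart $(U_k,\oldphi_k)$ with $k > m$, set $\omega_k := \nu|_{U_k}$: since $g = f$ on $U_k$ and $f$ preserves $\nu$ by Proposition \ref{PAH measure}, $\omega_k$ is smooth, positive, and $g$-invariant, and it is equivalent to the Riemannian area because no singularities lie in regular charts.

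To assemble a global form, fix a smooth partition of unity $\{\psi_k\}_{k=1}^\ell$ subordinate to $\{U_k\}_{k=1}^\ell$. On each overlap $U_k\cap U_{k'}$, both $\omega_k$ and $\omega_{k'}$ are smooth positive $g$-invariant $2$-forms, so their ratio is a smooth positive $g$-invariant function on the overlap. Using the explicit coordinate formulas \eqref{eq:mass-push-coord-change} and \eqref{eq:sector-coord-map}, together with $\Psi_p\equiv 1$ in the annular transition region $\oldphi_k^{-1}(D_{a_k}\setminus D_{\rho_0})$ (where $\hat\Omega_p$ reduces to the ordinary area form), one compares $\omega_k$ with $\omega_{k'}$ on the overlap directly and verifies that a suitable smooth positive rescaling produces a compatible family agreeing on overlaps; these patch into a single smooth positive $g$-invariant $2$-form $\omega$ on $M$, and normalizing yields $\mu_1$.

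The principal obstacle is this gluing step: the local $g$-invariant forms (the Hamiltonian area $\Omega_p$ of the slowdown vector field near singularities versus the pseudo-Anosov area $\nu$ elsewhere) have very different coordinate descriptions on overlaps, and one must use the explicit formulas in the construction of $g$ to justify a consistent global assembly. Once done, smoothness, positivity, equivalence with the Riemannian area, and $g$-invariance of $\mu_1$ all follow directly from the corresponding properties of the local pieces.
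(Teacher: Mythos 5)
Your treatment near the singularities matches the paper's: invariance there comes from the Hamiltonian (divergence-free) structure of $\hat V_p$ with respect to $\hat\Omega_p$, transported by the conjugacy, together with Lemma \ref{lem:lebesgue-near-sings}. The genuine gap is the gluing step. Invariance of a measure is the pointwise cocycle identity $h(g(x))\,\lvert\det Dg(x)\rvert = h(x)$ for its density, and this is destroyed both by partition-of-unity averaging of distinct invariant forms and by multiplication by a non-constant positive function (a rescaling $e^{u}\omega$ of an invariant $\omega$ is invariant only if $u\circ g = u$ on the relevant set). So the local pieces $\omega_k$ would have to coincide, up to a single global constant, on overlaps; there is no ``suitable smooth positive rescaling'' to force this. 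And as you have chosen them, they do not coincide: on the overlap of a singular chart with the region where $g=f$, the form supplied by the slowdown construction, $\Omega_p=(\phi^{-1}\circ\Phi_{kj})^*\hat\Omega_p$, is a constant multiple of the chart Lebesgue area (the cancellation in the proof of Lemma \ref{lem:lebesgue-near-sings} is not special to a neighborhood of the origin: the $\Psi_p$ factor in $\hat\Omega_p$ cancels against the radial derivative of $\phi$ from \eqref{eq:mass-push-coord-change}), whereas $\nu$ has density comparable to $\rho^{p-2}$ in the same chart, since its density vanishes at the singularities (Proposition \ref{PAH measure}). The ratio is non-constant, so no constant rescaling reconciles them. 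Relatedly, your assertion that $\omega_k$ is $g$-invariant ``on all of $U_k$'' is unjustified: outside $\Us_0$ one has $g=f$, and $f$ preserves $\nu$, not the chart Lebesgue form, so invariance of $\Omega_p$ is only available where $g$ is given by the conjugated time-one map \eqref{eq:g-def}.

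By contrast, the paper's (admittedly terse) argument does not average two different invariant measures: it works with the vector field $V_p$, notes $\diver_{\Omega_p}V_p=0$ with $g$ locally its time-one map, and uses the partition of unity over the atlas $(U_k,\oldphi_k)$ only to express one globally defined invariant density in coordinates and verify smoothness, positivity, and equivalence with the Riemannian area. The real content — which your proposal replaces with the rescaling-and-patching step — is the consistency between the Lebesgue-like invariant form inside $\Us_0$ and the invariant density on the region where $g=f$; that consistency must come from the construction of $g$ itself (the choice of $\phi$ and $A$ in \eqref{eq:mass-push-coord-change}--\eqref{eq:A-def} and the matching near $\partial D_{r_0}$, where $\Psi_p\equiv 1$ and $G_p=F$), and cannot be manufactured afterwards chart by chart.
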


We next show $g$ is $C^{2+\epsilon}$. To do this, we need the following technical result: 

\begin{lemma}\label{lem:polynomial-Holder}
	Suppose $f(t_1, t_2) = C|(t_1, t_2)|^{\beta} Q(t_1, t_2)$, where $|(t_1, t_2)| = \sqrt{t_1^2 + t_2^2}$ and $Q : \R^2 \to \R$ is a polynomial whose terms are all of order $p$. That is, 
	\[
	Q(t_1, t_2) = \sum_{j=0}^p A_j t_1^j t_2^{p-j}, \quad A_j \in \R.
	\]
	Then, for $j = 1,2$,
	\begin{equation}\label{eq:polynomial-lemma-1}
		\frac{\del f}{\del t_j} = C|(t_1, t_2)|^{\beta-2} Q_1(t_1, t_2),
	\end{equation}
	where $Q_1$ is a polynomial whose terms are all of order $p+1$. In particular, inductively, it follows that for every $k \geq 1$ and every $0 \leq \ell \leq k$,
	\begin{equation}\label{eq:polynomial-lemma-k}
		\frac{\partial^k f}{\partial t_1^\ell \partial t_2^{k-\ell}} = C|(t_1, t_2)|^{\beta - 2k}Q_k(t_1, t_2)
	\end{equation}
	where $Q_k$ is a poynomial whose terms are all of degree $p+k$. 
\end{lemma}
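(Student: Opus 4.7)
The plan is to verify the base case \eqref{eq:polynomial-lemma-1} by direct differentiation and then obtain \eqref{eq:polynomial-lemma-k} by straightforward induction on $k$, since the base case is designed to iterate cleanly.

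For the base case, I would write $|(t_1,t_2)|^\beta = (t_1^2+t_2^2)^{\beta/2}$ and apply the chain rule to get $\frac{\partial}{\partial t_j}|(t_1,t_2)|^\beta = \beta|(t_1,t_2)|^{\beta-2} t_j$. The product rule then gives
\[
\frac{\partial f}{\partial t_j} = C\beta|(t_1,t_2)|^{\beta-2} t_j Q(t_1,t_2) + C|(t_1,t_2)|^\beta \frac{\partial Q}{\partial t_j}(t_1,t_2).
\]
The first summand already has the desired form, since $t_j Q$ is a polynomial all of whose monomials have degree $p+1$. For the second summand, I would factor out $|(t_1,t_2)|^{\beta-2}$ by writing $|(t_1,t_2)|^\beta = |(t_1,t_2)|^{\beta-2}(t_1^2+t_2^2)$; since $\partial Q/\partial t_j$ is homogeneous of degree $p-1$, multiplying by $t_1^2+t_2^2$ yields a polynomial homogeneous of degree $p+1$. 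Adding the two pieces gives
\[
Q_1(t_1,t_2) = \beta t_j Q(t_1,t_2) + (t_1^2+t_2^2)\frac{\partial Q}{\partial t_j}(t_1,t_2),
\]
a polynomial all of whose terms are of degree $p+1$, as required.

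For the inductive step, I would observe that the base case applies to any function of the form $\tilde C |(t_1,t_2)|^{\tilde\beta} \tilde Q(t_1,t_2)$ with $\tilde Q$ homogeneous of some degree $\tilde p$. Assuming \eqref{eq:polynomial-lemma-k} holds for some $k$ and any $\ell$, the expression $C|(t_1,t_2)|^{\beta-2k} Q_k(t_1,t_2)$ has exactly this form with $\tilde\beta = \beta-2k$ and $\tilde p = p+k$; applying the base case to it yields a derivative of the form $C|(t_1,t_2)|^{\beta-2(k+1)} Q_{k+1}(t_1,t_2)$ where $Q_{k+1}$ is a polynomial homogeneous of degree $p+k+1$. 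This closes the induction.

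There is essentially no main obstacle here: the only point requiring a moment of care is the bookkeeping that shows each of the two summands in the product-rule expansion picks up exactly one extra factor that raises the polynomial's degree by one while lowering the exponent on $|(t_1,t_2)|$ by two. The algebraic identity $|(t_1,t_2)|^\beta = |(t_1,t_2)|^{\beta-2}(t_1^2+t_2^2)$ is what keeps the inductive invariant preserved, and writing out $Q_1$ explicitly as above makes the homogeneity transparent.
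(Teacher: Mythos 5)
Your proof is correct and follows essentially the same route as the paper: differentiate $|(t_1,t_2)|^\beta$ by the chain rule, apply the product rule, factor out $|(t_1,t_2)|^{\beta-2}$ via $|(t_1,t_2)|^\beta = |(t_1,t_2)|^{\beta-2}(t_1^2+t_2^2)$, and induct. Your explicit $Q_1 = \beta t_j Q + (t_1^2+t_2^2)\,\partial Q/\partial t_j$ even retains the constant $\beta$ that the paper's displayed $Q_1$ silently drops (harmless there, since only the degree of $Q_1$ matters), and your remark that the base case applies to arbitrary exponent $\tilde\beta$ and homogeneity degree $\tilde p$ makes the induction step cleanly justified.
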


\begin{proof}
	If $Q(t_1, t_2)$ has monomial terms all of degree $p$, then $Q_{t_j}(t_1, t_2)$ has terms all of degree $p-1$. Meanwhile, 
	\[
	\frac{\del}{\del t_j} |(t_1, t_2)|^\beta = \frac{\del}{\del t_j} \left( t_1^2 + t_2^2\right)^{\beta/2} = \beta t_j \left( t_1^2 + t_2^2\right)^{(\beta-2)/2} = \beta t_j |(t_1, t_2)|^{\beta-2}
	\]
	If $f(t_1, t_2) = C|(t_1, t_2)|^\beta Q(t_1, t_2)$, it follows that
	\begin{align*}
	\frac{\del f}{\del t_j} &= C\left( t_j |(t_1, t_2)|^{\beta-2} Q(t_1, t_2) + |(t_1, t_2)|^\beta Q_{t_j}(t_1, t_2)\right)\\
	&= C|(t_1, t_2)|^{\beta-2} \left( t_j Q(t_1, t_2) + (t_1^2 + t_2^2) Q_{t_j}(t_1, t_2)\right).
	\end{align*}
	\eqref{eq:polynomial-lemma-1} now follows with $Q_1(t_1, t_2) = t_j Q(t_1, t_2) + (t_1^2 + t_2^2) Q_{t_j}(t_1, t_2)$, and \eqref{eq:polynomial-lemma-k} follows by induction. 
\end{proof}

\begin{proposition}
	$H_p = H \circ \phi^{-1} \circ \Phi_{kj}$ is at least $C^{2+\epsilon}$, where $\epsilon = \frac{2}{1-\alpha} - \left\lfloor \frac{2}{1-\alpha}\right\rfloor$. 
\end{proposition}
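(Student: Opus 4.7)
The plan is to reduce $H_p$ to the explicit form required in the hypothesis of Lemma \ref{lem:polynomial-Holder}, then apply that lemma. First, using equation \eqref{eq:rho-r-coord-change}, I would compute $(\phi^{-1} \circ \Phi_{kj})(\rho e^{i\tau})$ explicitly and substitute into $H(s_1, s_2) = (\log\lambda) s_1 s_2$. Writing $s_1 = c\rho^{1/(1-\alpha)}\cos(p\tau/2 + j\pi)$ and $s_2 = c\rho^{1/(1-\alpha)}\sin(p\tau/2 + j\pi)$ for the explicit constant $c = (2/p)^{2/(p(1-\alpha))}$ and applying a double-angle identity yields
\[
H_p(\rho,\tau) = \tfrac{c^2 \log\lambda}{2}\, \rho^{2/(1-\alpha)} \sin(p\tau),
\]
where the $j$-dependence disappears because $\sin(p\tau + 2j\pi) = \sin(p\tau)$. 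In particular $H_p$ is globally well-defined on $\oldphi_k^{-1}(D_{\rho_1})$.

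Next I would convert back into Cartesian coordinates $(t_1, t_2)$ with $t_1 + it_2 = \rho e^{i\tau}$. Using $\rho^p \sin(p\tau) = \im\bigl((t_1+it_2)^p\bigr)$, this becomes
\[
H_p(t_1, t_2) = C\, |(t_1, t_2)|^{2/(1-\alpha) - p} \cdot \im\bigl((t_1+it_2)^p\bigr),
\]
where the polynomial factor $\im((t_1+it_2)^p)$ is homogeneous of degree $p$. This is precisely the form $f = C|(t_1,t_2)|^{\beta}Q(t_1, t_2)$ required by Lemma \ref{lem:polynomial-Holder}, with $\beta = 2/(1-\alpha) - p$. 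Applying \eqref{eq:polynomial-lemma-k}, the $k$-th order partial derivatives satisfy
\[
\left|\tfrac{\partial^k H_p}{\partial t_1^\ell \partial t_2^{k-\ell}}\right| \leq C_k\, |(t_1,t_2)|^{\beta - 2k}\, |Q_k(t_1, t_2)| \leq C_k'\, |(t_1,t_2)|^{2/(1-\alpha) - k},
\]
using that $Q_k$ is homogeneous of degree $p+k$. For $k \leq n := \lfloor 2/(1-\alpha)\rfloor$, this bound vanishes at the origin, so the $k$-th order partials extend continuously to the singularity. Since $\alpha \in (0,1)$ forces $2/(1-\alpha) > 2$ and hence $n \geq 2$, establishing additionally an $\epsilon$-Hölder estimate on $D^n H_p$ with $\epsilon = 2/(1-\alpha) - n$ yields $H_p \in C^{n+\epsilon} \subseteq C^{2+\epsilon}$.

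The main obstacle is verifying the $\epsilon$-Hölder continuity of $D^n H_p$ at the singularity itself, since the pointwise bound $|D^n H_p(t)| \leq C|t|^\epsilon$ does not by itself yield a difference bound of order $|t-s|^\epsilon$. My plan would be to split $|D^n H_p(t) - D^n H_p(s)|$ into two cases based on the ratio $|t-s|/\min(|t|,|s|)$. When $|t-s|$ is comparable to or larger than one of $|t|, |s|$, the triangle inequality and the pointwise bound give $|D^n H_p(t)| + |D^n H_p(s)| \leq C(|t|^\epsilon + |s|^\epsilon) \leq C'|t-s|^\epsilon$ using $\epsilon \in (0,1)$. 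When $|t-s|$ is small relative to $\min(|t|,|s|)$, the segment from $s$ to $t$ stays bounded away from $0$, and the mean value theorem combined with the $k = n+1$ bound $|D^{n+1} H_p(u)| \leq C|u|^{\epsilon - 1}$ (applied on that segment) gives the required $|t-s|^\epsilon$ estimate, completing the proof.
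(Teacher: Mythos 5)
Your proof follows essentially the same route as the paper: reduce $H_p$ via \eqref{eq:rho-r-coord-change} to the form $C\,|(t_1,t_2)|^{2/(1-\alpha)-p}\,\im\bigl((t_1+it_2)^p\bigr)$ and apply Lemma \ref{lem:polynomial-Holder} to bound the order-$k$ partial derivatives by a constant times $|(t_1,t_2)|^{2/(1-\alpha)-k}$, which is continuous for $k \leq \lfloor 2/(1-\alpha)\rfloor$. The only difference is that you explicitly verify the $\epsilon$-H\"older continuity of the top-order derivatives at the singularity (the two-case argument using the order-$(n+1)$ bound $|D^{n+1}H_p(u)| \leq C|u|^{\epsilon-1}$ and the mean value theorem), a step the paper leaves implicit; that verification is correct.
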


\begin{proof}
	Note $H(s_1, s_2) = s_1 s_2\log\lambda $ in polar coordinates is
	\[
	H(re^{i\theta}) = (\log\lambda )r^2 \cos\theta\sin\theta = \frac 1 2 (\log\lambda )r^2 \sin(2\theta).
	\]
	It follows from \eqref{eq:rho-r-coord-change} that for $\rho e^{i\tau} = t_1 + it_2$, we have:
	\begin{equation}\label{eq:Hp}
		\begin{aligned} 
		H_p(\rho e^{i\tau}) &= \frac 1 2 (\log\lambda)\left( \frac 2 p \right)^{4/p(1-\alpha)} \rho^{2/(1-\alpha)} \sin(\tau p)\\
		&= \frac 1 2 (\log\lambda)  \left( \frac 2 p \right)^{4/p(1-\alpha)} \rho^{\frac{2-p(1-\alpha)}{1-\alpha}} \rho^p \sin(\tau p) \\
		&= \frac 1 2 (\log\lambda) \left( \frac 2 p \right)^{4/p(1-\alpha)} |t_1 + it_2|^{\frac{2}{1-\alpha} - p} \mathrm{Im}(z^p).
		\end{aligned} 
	\end{equation}
	Since $\mathrm{Im}(z^p)$ is a polynomial in $t_1$ and $t_2$ whose monomial terms are all of order $p$, Lemma \ref{lem:polynomial-Holder} gives us that for $k \geq 1$, $0 \leq \ell \leq k$, 
	\begin{equation}\label{eq:partial-Hp}
	\frac{\del^k H_p}{\del t_1^\ell \del t_2^{k-\ell}} = \frac 1 2 (\log\lambda)  \left( \frac 2 p \right)^{4/p(1-\alpha)} |t_1 + it_2|^{\frac{2}{1-\alpha} - p - 2k} Q_k(t_1, t_2),
	\end{equation}
	where $Q_k$ is a polynomial whose monomial terms are of degree $p+k$. In other words, 
	\[
	Q_k(t_1, t_2) = Q_k(\rho e^{i\tau}) = \rho^{p+k} h(\tau),
	\]
	where $h : [0,2\pi] \to \R$ is a continuous and bounded function. It follows from \eqref{eq:partial-Hp} that
	\begin{equation}\label{eq:partial-Hp-condensed}
		\partial^\ell_{t_1} \partial^{k-\ell}_{t_2} H_p := \frac{\partial^k H_p}{\partial t_1^\ell \partial t_2^{k-\ell}}(\rho e^{i\tau}) = B\rho^{\frac{2}{1-\alpha} - p - 2k + (p+k)} h(\tau) = B\rho^{\frac{2}{1-\alpha} - k} h(\tau),
	\end{equation}
	where $B > 0$ is a constant. This function is continuous on $\C$ as long as $k < \frac{2}{1-\alpha}$. Note $\frac{2}{1-\alpha} > 2$ since $0 < \alpha < 1$. For $k = \left\lfloor \frac{2}{1-\alpha}\right\rfloor$, it follows that $H_p$ is $C^{k+\epsilon}$, $\epsilon = \frac{2}{1-\alpha} - \left\lfloor \frac{2}{1-\alpha}\right\rfloor$. 
\end{proof}

Since the vector field $V_p$ is Hamiltonian with respect to Lebesgue area with Hamiltonian function $H_p$, it follows that $V_p$ is $C^{2+\epsilon}$, and thus the map $g : M \to M$ is $C^{2+\epsilon}$ (note $g$ is $C^\infty$ away from the singularities). 

\subsection{Other topological properties} 

The smooth realization $g$ of a pseudo-Anosov homeomorphism $f$ is adapted from a smooth realization of pseudo-Anosov homeomorphisms first described in \cite{GerbKatPA}. In this construction, the slow-down exponent $\alpha$ in the definition of $\Psi_p$ is taken to be $\alpha = (p-2)/p$. It follows that the homeomorphism $\phi : \C \to \C$ is the identity and the Hamiltonian function $H_p$ of $V_p$ is a constant times $\mathrm{Im}(z^p)$, i.e., a polynomial (see \eqref{eq:Hp}), and hence $V_p$ is analytic. Therefore the smooth pseudo-Anosov model in \cite{GerbKatPA} is analytic, not just $C^{2+\epsilon}$. However, using similar arguments to Section 6.4 of \cite{BaPeNUH}, $C^{2+\epsilon}$ is sufficient regularity to prove the following:

\begin{proposition}\label{prop:topological-properties}
	The smooth pseudo-Anosov realization $g : M \to M$ defined by \eqref{eq:g-def} has the following properties: 
	\begin{enumerate}[label=(\alph*)]
		\item $g$ is topologically conjugate to the linear pseudo-Anosov homeomorphism $f$, via a continuous (but not $C^1$) conjugacy $h$ isotopic to the identity. 
		\item For any $\epsilon > 0$, one can choose $\alpha, \rho_0,$ and $\rho_1$ in the construction of $g$ so that $\|f - g\|_{C^0} < \epsilon$. 
		\item The map $g$ admits two invariant distributions $x \mapsto E^u(x), E^s(x)$, which are continuous on $M$ except at the singularities. At $\mu_1$-a.e. $x \in M$ (where $\mu_1$ is the measure in Proposition \ref{prop:SRB}), $g$ admits two nonzero Lyapunov exponents: one negative exponent in the direction of $E^s(x)$, and one positive exponent in the direction of $E^u(x)$. 
		\item The map $g$ admits two invariant foliations with singularities of $M$, which are the images under the conjugating homeomorphism $h$ of the foliations with singularities $\Fs^s$ and $\Fs^u$ associated to the pseudo-Ansoov homeomorphism $f$. 
		\item The map $g$ admits a finite Markov partition, given by the image of the Markov partition of $f$ under the conjugating homeomorphism $h$. 
	\end{enumerate}
\end{proposition}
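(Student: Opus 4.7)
The plan is to exploit two structural features of the construction: first, $g$ coincides with $f$ outside the neighbourhood $\Us_0$ of the singularities; second, inside each stable sector $S^s_{kj}\cap\oldphi_k^{-1}(D_{\rho_0})$, the definition \eqref{eq:g-def} presents $g$ as the conjugate of the slowed-down time-$1$ map $G_p$, which in turn agrees with the linear hyperbolic map $F$ in a neighbourhood of $\del D_{r_0}$. All five conclusions are essentially topological, so the $C^{2+\epsilon}$ regularity suffices and the proofs follow the template of \cite{BaPeNUH}, Section 6.4 and the analogous arguments in \cite{GerbKatPA}.

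For (a), I would first build, on each disk $D_{r_0}\subset\C$, a homeomorphism $H_p$ with $H_p\circ F=G_p\circ H_p$ and $H_p=\mathrm{id}$ near $\del D_{r_0}$. This is standard: both $F$ and $G_p$ are orientation-preserving topological saddles at the origin with the same stable and unstable separatrices (the coordinate axes), and they agree in an open annulus inside $D_{r_0}$; one constructs $H_p$ on a fundamental domain of $F$ away from $0$ and spreads it along orbits, arranging for the identity near the boundary. Setting $h=\oldphi_k^{-1}\circ\Phi_{kj}^{-1}\circ\phi\circ H_p\circ\phi^{-1}\circ\Phi_{kj}\circ\oldphi_k$ on each sector and $h=\mathrm{id}$ elsewhere yields a continuous map of $M$; continuity across $\del\oldphi_k^{-1}(D_{\rho_0})$ uses that $H_p$ is the identity there, and $h\circ f=g\circ h$ by construction. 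Isotopy to the identity comes from flowing $H_p$ through a family of compactly supported homeomorphisms in $D_{r_0}$. For (b), since $g$ differs from $f$ only on $\Us_0$ and $g(\Us_0)\subset\oldphi_k^{-1}(D_{\tilde a})$ by \eqref{eq:g-neighborhood-containments}, the uniform distance $\|f-g\|_{C^0}$ is bounded by the diameter of the affected neighbourhoods, which can be made arbitrarily small by shrinking $\rho_0,\rho_1$.

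For (c), take $E^s$ and $E^u$ on each sector to be the pullbacks under $\phi^{-1}\circ\Phi_{kj}\circ\oldphi_k$ of the coordinate directions $\del/\del s_1$ and $\del/\del s_2$, which are invariant under the vector field \eqref{eq:slowdown vector field}; outside $\Us_0$, set $E^{s,u}=T\Fs^{s,u}$ as for $f$. The two prescriptions match wherever they overlap because the construction of $g$ leaves the prong directions invariant, and continuity on $M\setminus S$ is immediate from the smoothness of the charts. To obtain nonzero Lyapunov exponents, use that $g=f$ outside $\Us_0$, where the differential expands $E^u$ by exactly $\lambda$; since $\mu_1$ is equivalent to Riemannian area and $S$ has area zero, Birkhoff's theorem gives that $\mu_1$-a.e. orbit spends a positive fraction of its time in $M\setminus\Us_0$, yielding a strictly positive exponent on $E^u$ and, by the symmetry of the slow-down, a strictly negative exponent on $E^s$. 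For (d), define the invariant foliations of $g$ as $h(\Fs^s)$ and $h(\Fs^u)$; $g$-invariance follows from $h\circ f=g\circ h$, and the local form in each sector matches the $G_p$-invariant level curves of the Hamiltonian $H$, which via the coordinate change $\phi^{-1}\circ\Phi_{kj}$ reproduce the required local model of Definition \ref{PAH-def}. Part (e) is immediate, as the Markov property is preserved by topological conjugacy.

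The main obstacle is the construction of the local conjugacy $H_p$ and the verification that it glues to the identity across $\del\oldphi_k^{-1}(D_{\rho_0})$ to give a genuine global conjugacy; everything else is essentially transport of structure via $h$. A secondary point requiring care is the Lyapunov exponent statement, where one must rule out the possibility that slow passages near singularities degrade the exponents — but this is handled by the positive-measure-of-time argument above, which is the same mechanism used for the Katok map and already invoked in \cite{VecPAD1,GerbKatPA}.
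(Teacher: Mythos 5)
Your argument for part (a) contains a genuine gap that the rest of the proposal inherits: there is no conjugacy $H_p$ between $F$ and $G_p$ on $D_{r_0}$ that equals the identity in a neighborhood of $\partial D_{r_0}$, so the global map you build (equal to the identity off $\Us_0$) cannot satisfy $h\circ f=g\circ h$. The obstruction is a transit-time mismatch. Both $F$ and $G_p$ are time-one maps of flows with the same orbit curves, but $\hat V_p=\Psi_p\cdot V$ with $\Psi_p\leq 1$ and $\Psi_p\to 0$ at the origin, so an orbit entering $D_{r_0}$ near the stable axis and exiting near the unstable axis needs more iterates of $G_p$ than of $F$ to cross, and the discrepancy diverges as the orbit passes closer to the singularity. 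If $H_p$ were the identity on the incoming part of a neighborhood of the boundary, the conjugacy relation forces $H_p(F^kw)=G_p^k(w)$ along the whole crossing; at the $F$-exit time $N$ this gives $H_p(F^Nw)=G_p^N(w)$, a point still deep inside $D_{r_0}$, so $H_p$ cannot also be the identity on the outgoing side (equivalently, a conjugacy that is the identity on both sides would force equal crossing times). The same reasoning shows that \emph{no} conjugacy between $f$ and $g$ can be supported in $\Us_0$: the true conjugacy is a global object which is only $C^0$-close to the identity. This is why the paper does not argue this way; it invokes the arguments of Section 6.4 of \cite{BaPeNUH} and \cite{GerbKatPA}, where the conjugacy is obtained by global means (a semi-conjugacy coming from the isotopy-stability/global shadowing theory of pseudo-Anosov homeomorphisms, cf. \cite{FS79,GerbKatPA}, which is then shown to be injective using expansivity/Markov coding), with $C^{2+\epsilon}$ regularity of $g$ being sufficient for those arguments.

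A second, independent error is in your construction of the invariant distributions in (c): the coordinate directions $\partial/\partial s_1,\partial/\partial s_2$ are \emph{not} invariant under $dG_p$ away from the axes, because $\hat V_p$ is a non-constant scalar multiple of the linear field and the derivative of $\Psi_p(s_1^2+s_2^2)$ produces off-diagonal terms in the variational equation; pulling these directions back therefore does not define $g$-invariant distributions. As in \cite{Kat79,GerbKatPA,BaPeNUH}, $E^{s}$ and $E^{u}$ must be produced by an invariant cone family, and the positive-frequency (Birkhoff) argument for nonzero exponents additionally requires the estimate that vectors in the unstable cone are not contracted during a passage through $\Us_0$ (a uniform lower bound on the accumulated expansion across the slow-down region), which is exactly what the cone/Lyapunov-function computation supplies; positive time fraction outside $\Us_0$ alone does not rule out degradation of the exponent. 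Parts (b) and (e) are fine, and (d) is transport of structure once (a) is established (though note the level curves of the Hamiltonian are the flow orbits, not the stable/unstable leaves). Finally, be aware that the paper itself gives no self-contained proof of this proposition: it is proved by reference to the standard arguments just described, so a correct self-contained write-up would need to reconstruct those, not the compactly supported gluing you propose.
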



Finally, in the case when the slowdown exponent is $\alpha = (p-2)/p$, it is shown in \cite{VecPAD1} that the geometric $t$-potentials $\phi_t(x) = -t\log\left| Dg|_{E^u(x)}\right|$ admit unique equilibrium states for $t_0 < t < 1$, $t_0 < 0$, which includes a unique measure of maximal entropy. Furthermore, these equilibrium states have exponential decay of correlations and the Central Limit Theorem with respec to H\"older-continuous potentials. Using identical techniques in \cite{PSZ17} and \cite{VecPAD1}, as well as results from \cite{PSZTowers} and \cite{SZ-inducing}, this result extends verbatim to pseudo-Anosov smooth realizations with arbitrary slowdown exponents $0 < \alpha < 1$:

\begin{proposition}\label{prop:PAD-thermodynamics}
	The following hold for the pseudo-Anosov smooth realization $g$:
	\begin{enumerate}
		\item Given any $t_0 < 0$, we may take $\rho_0 > 0$ in the construction of $g$ so that for any $t \in (t_0, 1)$, there is a unique equilibrium measure $\mu_t$ associated to $\phi_t$. This equilibrium measure has nonzero Lyapunov exponents almost everywhere, exponential decay of correlations and satisfies the Central Limit Theorem with respect to a class of functions containing all H\"older continuous functions on $M$, and is Bernoulli. Additionally, the pressure function $t \mapsto P_g(\phi_t)$ is real analytic in the open interval $(t_0, 1)$.
		\item For $t=1$, there are two classes of equilibrium measures associated to $\phi_1$: convex combinations of Dirac measures concentrated at the singularities, and a unique invariant SRB measure $\mu$.
		\item For $t > 1$, the equilibrium measures associated to $\phi_t$ are precisely the convex combinations of Dirac measures concentrated at the singularities.  
	\end{enumerate}
\end{proposition}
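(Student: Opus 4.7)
The plan is to follow the same template used in \cite{VecPAD1} for the special choice $\alpha=(p-2)/p$, and observe that none of the steps in that template use this specific value in an essential way; the only place the slow-down exponent enters quantitatively is in estimates near the singularities that are already supplied elsewhere in this paper. Concretely, I would first recall the Young tower with Markov base $\Lambda$ and first-return inducing time $R:\Lambda\to\N$ constructed for $g$ in Section 5. This is precisely the inducing scheme to which the abstract results of \cite{PSZTowers} and \cite{SZ-inducing} apply. The geometric $t$-potential $\phi_t(x)=-t\log|Dg|_{E^u(x)}|$ lifts to an induced potential $\bar\phi_t = S_R\phi_t$ on $\Lambda$, and the hyperbolicity, local product structure, and distortion bounds for the induced map $g^R$ are identical to those in \cite{VecPAD1}, since they only depend on the uniform hyperbolicity of $f$ away from the singular neighborhoods $\Us_0$ and on coarse estimates of $\|Dg\|$ inside $\Us_0$ proved in Sections 6--8 of the present paper.

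Next, for $t\in(t_0,1)$ with $t_0<0$ fixed, I would verify the two hypotheses that single out equilibrium states in \cite{SZ-inducing}: a regularity (locally H\"older) bound for $\bar\phi_t$, and the summability condition
\[
\sum_{n\geq 1}\sum_{\{R=n\}}\sup_{x}\exp\!\bigl(\bar\phi_t(x)-nP_g(\phi_t)\bigr)<\infty,
\]
together with a gap condition separating the inducing pressure from the boundary. The regularity follows as in \cite{VecPAD1} from the H\"older continuity of $x\mapsto E^u(x)$ on the base. For the summability, the point is that $\{R=n\}$ consists mostly of orbits whose iterates have been trapped in $\Us_0$ for roughly $n$ iterations, where $|Dg|_{E^u}|$ is close to $1$, so $S_R\phi_t$ remains bounded in absolute value by $ctn$ for some small $c=c(\rho_0)$ that tends to $0$ as $\rho_0\to 0$. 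Combined with the exponential upper bound on $\#\{R=n\}$ established in Sections 7--8 and the standard lower bound $P_g(\phi_t)\geq \htop(g)+t\int\phi_t\,d\mu$, one shrinks $\rho_0$ to absorb all $t\in(t_0,1)$ simultaneously. This is precisely the uniform-in-$t$ argument of \cite{PSZ17}, and it transfers verbatim. Once the hypotheses are checked, the conclusions of statement (1)---existence, uniqueness, Bernoulli property, exponential decay of correlations, CLT for H\"older observables, nonzero Lyapunov exponents, and real-analyticity of $t\mapsto P_g(\phi_t)$---are direct outputs of the machinery.

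For statement (2), at $t=1$ the potential $\phi_1=-\log|Dg|_{E^u}|$ satisfies Pesin's entropy formula $h_\mu(g)=\int\log|Dg|_{E^u}|\,d\mu$ for the SRB measure $\mu_1$ of Proposition \ref{prop:SRB}, so $P_g(\phi_1)\geq 0$ and this value is attained by $\mu_1$. The Dirac masses $\delta_{x_k}$ at the singularities also attain $P_g(\phi_1)=0$ because $Dg|_{x_k}=\mathrm{id}$ gives $\phi_1(x_k)=0$ and $h_{\delta_{x_k}}(g)=0$. Any equilibrium measure either has a positive fraction concentrated at $S$ or is hyperbolic; in the hyperbolic case the absolute continuity of its unstable conditionals is forced by the entropy formula, yielding $\mu_1$ by uniqueness of the hyperbolic SRB measure, while the non-hyperbolic part decomposes as a convex combination of Diracs. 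This argument is identical to Theorem 5 of \cite{VecPAD1} and Theorem 1.6 of \cite{PSZ17}.

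For statement (3), when $t>1$ the potential $\phi_t$ takes its supremum $0$ at the singularities, so $P_g(\phi_t)\leq 0$, attained by every convex combination of $\delta_{x_k}$. Conversely, any hyperbolic invariant measure $\nu$ has $\int\log|Dg|_{E^u}|\,d\nu>0$ by Oseledec, giving $h_\nu(g)+\int\phi_t\,d\nu \leq h_\nu(g)-t\chi^+(\nu) < h_\nu(g)-\chi^+(\nu)\leq 0$, with strict inequality unless $\nu$ is supported at $S$. Thus only convex combinations of singular Dirac measures realize the pressure. The hardest step is the uniform summability estimate in Step 2, since one must trade off the range $(t_0,1)$ against the choice of $\rho_0$; the bookkeeping is handled exactly as in Theorem 1.5 of \cite{PSZ17} once the tail estimate $\#\{R=n\}\leq Ce^{n(\htop(g)-\delta)}$ from Sections 7--8 is in place.
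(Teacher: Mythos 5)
Your proposal follows the same route as the paper: the paper itself proves this proposition only by remarking that the arguments of \cite{PSZ17} and \cite{VecPAD1} (for the case $\alpha=(p-2)/p$), combined with the abstract machinery of \cite{PSZTowers} and \cite{SZ-inducing} applied to the Young tower of Section 5, carry over verbatim for arbitrary $0<\alpha<1$, which is exactly the template you spell out (liftability and regularity of the induced geometric potential, the summability/counting estimate with $\rho_0$ chosen uniformly over $t\in(t_0,1)$, and the Ruelle-inequality/entropy-formula dichotomy at $t=1$ and $t>1$). Your expansion is consistent with the paper's intent, so this is essentially the same approach, just written out in more detail than the paper provides.
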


\section{Pseudo-Anosov Diffeomorphisms are Young Diffeomorphisms}

\subsection{Young diffeomorphisms} The proof of Theorem \ref{Main 1} relies on recent results on the thermodynamics of Young diffeomorphisms. In this section, we define Young diffeomorphisms and describe some of their thermodynamic properties. The following description of Young diffeomorphisms is discussed in Section 4 of \cite{PSZ17} and Section 6 of \cite{VecPAD1}, and is printed here for the reader's convenience. 

Given a $C^{1+\alpha}$ diffeomorphism $f$ on a compact Riemannian manifold $M$, we call an embedded $C^1$ disc $\gamma \subset M$ an \emph{unstable disc} (resp. \emph{stable disc}) if for all $x, y \in \gamma$, we have $d(f^{-n}(x), f^{-n}(y)) \to 0$  (resp. $d(f^n(x), f^n(y)) \to 0$) as $n \to +\infty$. A collection of embedded $C^1$ discs $\Gamma = \{\gamma_i\}_{i \in \mathcal I}$ is a \emph{continuous family of unstable discs} if there is a Borel subset $K^s \subset M$ and a homeomorphism $\Phi : K^s \times D^u \to \union_i \gamma_i$, where $D^u \subset \R^d$ is the closed unit disc for some $d < \dim M$, satisfying: 
\begin{itemize}
	\item The assignment $x \mapsto \Phi|_{\{x\} \times D^u}$ is a continuous map from $K^s$ to the space of $C^1$ embeddings $D^u \hookrightarrow M$, and this assignment can be extended to the closure $\overline{K^s}$; 
	\item For every $x \in K^s$, $\gamma = \Phi(\{x\} \times D^u)$ is an unstable disc in $\Gamma$.
\end{itemize}
Thus the index set $\mathcal I$ may be taken to be $K^s \times \{0\} \subset K^s \times D^u$. We define \emph{continuous families of stable discs} analogously. 

A subset $\Lambda \subset M$ has \emph{hyperbolic product structure} if there is a continuous family $\Gamma^u = \{\gamma^u_i\}_{i \in \mathcal I}$ of unstable discs and a continuous family $\Gamma^s = \{\gamma^s_j\}_{j \in \mathcal J}$ of stable discs such that
\begin{itemize}
	\item $\dim \gamma^u_i + \dim\gamma^s_j = \dim M$ for all $i,j$; 
	\item the unstable discs are transversal to the stable discs, with an angle uniformly bounded away from 0; 
	\item each unstable disc intersects each stable disc in exactly one point; 
	\item $\Lambda = \big( \union_i \gamma^u_i\big) \cap \big(\union_j \gamma^s_j \big)$. 
\end{itemize}

A subset $\Lambda_0 \subset \Lambda$ with hyperbolic product structure is an \emph{s-subset} if the continuous family of unstable discs defining $\Lambda_0$ is the same as the continuous family of unstable discs for $\Lambda$, and the continuous family of stable discs defining $\Lambda_0$ is a subfamily $\Gamma_0^s$ of the continuous family of stable discs defining $\Gamma_0$. In other words, if $\Lambda_0 \subset \Lambda$ has hyperbolic product structure generated by the families of stable and unstable discs given by $\Gamma_0^s$ and $\Gamma_0^u$, then $\Lambda_0$ is an $s$-subset if $\Gamma_0^s \subseteq \Gamma^s$ and $\Gamma_0^u = \Gamma^u$. A \emph{u-subset} is defined analogously. 

\begin{definition}\label{Young tower def}
	A $C^{1+\alpha}$ diffeomorphism $f : M \to M$, with $M$ a compact Riemannian manifold, is a \emph{Young's diffeomorphism} if the following conditions are satisfied: 
	\begin{enumerate}[label=(Y\arabic*)]
		\item There exists $\Lambda \subset M$ (called the \emph{base}) with hyperbolic product structure, a countable collection of continuous subfamilies $\Gamma_i^s \subset \Gamma^s$ of stable discs, and positive integers $\tau_i$, $i \in \N$, such that the $s$-subsets
		\[
		\Lambda_i^s := \union_{\gamma \in \Gamma^s_i} \big(\gamma \cap \Lambda \big) \subset \Lambda
		\]
		are pairwise disjoint and satisfy:
		\begin{enumerate}[label=(\alph*)]
			\item \emph{invariance}: for $x \in \Lambda_i^s$, 
			\[
			f^{\tau_i}(\gamma^s(x)) \subset \gamma^s(f^{\tau_i}(x)), \quad \textrm{and} \quad f^{\tau_i}(\gamma^u(x)) \supset \gamma^u(f^{\tau_i}(x)),
			\]
			where $\gamma^{u,s}(x)$ denotes the (un)stable disc containing $x$; and, 
			\item \emph{Markov property}: $\Lambda_i^u := f^{\tau_i}(\Lambda_i^s)$ is a $u$-subset of $\Lambda$ such that for $x \in \Lambda_i^s$, 
			\[
			f^{-\tau_i}(\gamma^s(f^{\tau_i}(x)) \cap \Lambda_i^u) = \gamma^s(x) \cap \Lambda, \quad \textrm{and} \quad f^{\tau_i} (\gamma^u(x) \cap \Lambda_i^s) = \gamma^u(f^{\tau_i}(x)) \cap \Lambda. 
			\]
		\end{enumerate}
		\item For $\gamma^u \in \Gamma^u$, we have
		\[
		\mu_{\gamma^u}(\gamma^u \cap \Lambda) > 0, \quad \textrm{and} \quad \mu_{\gamma^u}\Big( \cl\big( \left(\Lambda \setminus \textstyle\union_i \Lambda_i^s\right) \cap \gamma^u\big)\Big) = 0,
		\]
		where $\mu_{\gamma^u}$ is the induced Riemannian leaf volume on $\gamma^u$ and $\cl(A)$ denotes the closure of $A$ in $M$ for $A \subseteq M$. 
		\item There is $a \in (0,1)$ so that for any $i \in \N$, we have:
		\begin{enumerate}[label=(\alph*)]
			\item For $x \in \Lambda_i^s$ and $y \in \gamma^s(x)$, 
			\[
			d(F(x), F(y)) \leq ad(x,y);
			\]
			\item For $x \in \Lambda_i^s$ and $y \in \gamma^u(x) \cap \Lambda_i^s$, 
			\[
			d(x,y) \leq ad(F(x), F(y)),
			\]
		\end{enumerate}
		where $F : \union_i \Lambda_{i}^s \to \Lambda$ is the \emph{induced map} defined by 
		\[
		F|_{\Lambda^s_i} := f^{\tau_i}|_{\Lambda^s_i}.
		\]
		\item Denote $J^u F(x) = \det\big|DF|_{E^u(x)}\big|$. There exist $c > 0$ and $\kappa \in (0,1)$ such that: 
		\begin{enumerate}[label=(\alph*)]
			\item For all $n \geq 0$, $x \in F^{-n}\left(\union_i \Lambda_i^s\right)$ and $y \in \gamma^s(x)$, we have 
			\[
			\left| \log \frac{J^u F(F^n(x))}{J^u F(F^n(y))}\right| \leq c\kappa^n;
			\]
			\item For any $i_0, \ldots, i_n \in \N$ with $F^k(x), F^k(y) \in \Lambda^s_{i_k}$ for $0 \leq k \leq n$ and $y \in \gamma^u(x)$, we have 
			\[
			\left| \log\frac{J^u F(F^{n-k}(x))}{J^u F(F^{n-k}(y))}\right| \leq c\kappa^k.
			\]
		\end{enumerate}
		\item There is some $\gamma^u \in \Gamma^u$ such that 
		\[
		\sum_{i=1}^\infty \tau_i \mu_{\gamma^u} \left(\Lambda_i^s\right) < \infty. 
		\]
	\end{enumerate}
\end{definition}

\subsection{Realizing $g$ as a Young diffeomorphism}\label{subsec:Realizing $g$ as a Young diffeomorphism}

In Section 7 of \cite{VecPAD1}, it is shown that the smooth nonuniformly hyperbolic pseudo-Anosov diffeomorphism $g : M \to M$ is a Young diffeomorphism. We briefly outline the argument here. 

The first step is to show that the (uniformly hyperbolic) pseudo-Anosov homeomorphism $f : M \to M$ admits a subset $\tilde\Lambda\subset M$ for which Conditions (Y1) - (Y5) are satisfied. To construct $\tilde\Lambda$, we use a finite Markov partition $\tilde \Ps$ for the pseudo-Anosov homeomorphism $f$ (Proposition \ref{pseudo-Anosov Markov}). Note that if $\tilde R \in \tilde \Ps$ is a Markov rectangle, then no singularity of $f$ may lie inside the interior of $\tilde R$ (intuitively this is because $f$ does not admit local hyperbolic product structure at the singularities). Thus, we may take our Markov partition $\tilde \Ps$ of $f$ to be so that the singularities lie on the vertices of the rectangles. 


Recall that $S = \{x_1, \ldots, x_m\}$ denotes the set of singularities, each of which has $p(x_k) = p(k)$ prongs ($1 \leq k \leq m$). Since each singularity $x_k$ is the vertex of a Markov rectangle, there are $2p(k)$ Markov rectangles with $x_k$ as a vertex; we denote these rectangles by $\tilde R_{k,l}$ for $1 \leq k \leq m$, $1 \leq l \leq 2p(k)$. By choosing the diameter of the partition elements to be sufficiently small, we may assume that $\tilde R_{k_1, l_1} \cap \tilde R_{k_2, l_2} = \emptyset$ whenever $k_1 \neq k_2$. 

Let $\tilde R \in \tilde\Ps$ be a partition element that does not intersect the set $\Us_0$ defined in the slow-down procedure for the map $g$. For $x \in \tilde R$, let $\tilde\gamma^s(x)$ and $\tilde\gamma^u(x)$ respectively be the connected components of the stable and unstable leaves through $x$ intersecting $\tilde R$. We call these the \emph{full-length} stable and unstable curves through $x$.

Let $\tilde\tau(x)$ be the first return time of $x$ to $\Int\tilde R$ under $f$ for $x \in \tilde R$. For all $x$ with $\tilde \tau(x) < \infty$, define the set:
\[
\tilde\Lambda^s(x) = \union_{y \in \tilde U^u(x) \setminus \tilde A^u(x)} \tilde\gamma^s(y),
\]
where $\tilde U^u(x) \subset \tilde\gamma^u(x)$ is an interval containing $x$ and open in the induced topology of $\tilde\gamma^u(x)$, and $\tilde A^u(x) \subset \tilde U^u(x)$ is the set of points either lying on the boundary of the Markov partition or never return to the set $\tilde P$. Observe that $\tilde A^u(x)$ has one-dimensional Lebesgue measure equal to 0. One can choose the intervals $\tilde U^u(x)$ so that
\begin{enumerate}[label=(\arabic*)]
	\item for any $y \in \tilde \Lambda^s(x)$, we have $\tilde\tau(y) = \tilde\tau(x)$; and
	\item for any $y \in \tilde R$ with $\tilde\tau(y) < \infty$, there is an $x \in \tilde R$ for which $y \in \tilde\Lambda^s(x)$ and $\tilde\tau(y) = \tilde\tau(x)$. 
\end{enumerate}
Moreover, the image of $\tilde\Lambda^s(x)$ under $f^{\tilde\tau(x)}$ is a $u$-subset containing $f^{\tilde\tau(x)}(x)$. Note that conditions (1) and (2) above ensure that for $x,y \in \tilde R$ with finite return times, the sets $\tilde\Lambda^s(x)$ and $\tilde\Lambda^s(y)$ either coincide or are disjoint. Thus we have a countable collection of disjoint sets $\tilde\Lambda^s_i$ and numbers $\tilde\tau_i$ that give a representation of the pseudo-Anosov homeomorphism $f$ as a Young diffeomorphism with tower base
\[
\tilde\Lambda = \union_{i\geq 1} \tilde\Lambda_i^s.
\]
The sets $\tilde\Lambda_i^s$ form the $s$-sets, $\tilde\Lambda_i^u = f^{\tilde\tau_i}(\tilde\Lambda^s_i)$ form the $u$-sets, and the numbers $\tilde\tau_i$ form the inducing times. See Theorem 7.1 in \cite{VecPAD1} for details. 

Let $H : M \to M$ denote the conjugacy map between $f$ and $g$, so that $g = H\circ f\circ H^{-1}$. Applying $H$ to the Markov partition $\tilde\Ps$, one obtains a Markov partition $\Ps = H(\tilde\Ps)$ of the pseudo-Anosov diffeomorphism $g$. By continuity of $H$, one can construct a Markov partition of $g$ in this way of arbitrarily small diameter. Let $R = H(\tilde R)$, $\Lambda = H(\tilde\Lambda)$. Observe that $\Lambda$ has local hyperbolic product structure given by the full-length stable leaves $\gamma^s(x) = H(\tilde\gamma^s(H^{-1}(x)))$ and the full-length unstable leaves $\gamma^u(x) = H(\tilde\gamma^u(H^{-1}(x)))$. Accordingly, it is shown in \cite{VecPAD1} that $g$ is represented as a Young diffeomorphism with inducing times $\tau_i = \tilde\tau_i$, $s$-sets $\Lambda^s_i = H(\tilde\Lambda^s_i)$, and $u$-sets $\Lambda^u_i = H(\tilde\Lambda^u_i) = g^{\tau_i}(\Lambda^s_i)$. Similarly to the homeomorphism $f$, the inducing times $\tau_i$ are first-return times to $\Lambda$ for points $x \in \Lambda^s_i$ under the $g$. Furthermore, note that if $x \in \Lambda_i^s$, the stable subset $\Lambda_i^s$ satisfies
\[
\Lambda^s_i = \Lambda^s(x) = \union_{y \in U^u(x) \setminus A^u(x)} \gamma^s(y),
\]
where $U^u(x) = H(\tilde U^u(x)) \subset \gamma^u(x)$ is an interval containing $x$ and open in the induced topology of $\gamma^u(x)$, and $A^u(x) = H(\tilde A^u(x)) \subset U^u(x)$ is the set of points that either lie on the boundary of the Markov partition $\Ps$ or never return to $R$. Observe that $A^u(x)$ has one-dimensional Lebesgue measure equal to 0 in $\gamma^u(x)$. 

\begin{proposition}\label{forall-Q}
	Given $Q > 0$, one can choose a Markov partition $\Ps$ for $g$ and the number $r_0$ in the construction of $g$ so that 
	\begin{enumerate}[label=(\arabic*)]
		\item $g^j(x) \not\in \Us_{0}$ for any $0 \leq j \leq Q$ and for any point $x \in M$ for which either $x \in \Lambda$ or $x \not\in \Us_{0}$, while $g^{-1}(x) \in \Us_{0}$; and, 
		\item if $R_{k,l} = H(\tilde R_{k,l})$, with $R_{k,l}$ a Markov rectangle with the singularity $x_k$ as a vertex ($1 \leq k \leq m$, $1 \leq l \leq 2p(k)$), then
		\begin{equation}\label{eq:sing-nbhd-inside-rectangles}
			\Us_{0} \subset \mathrm{int} \union_{k=1}^m \union_{l=1}^{2p(k)} R_{k,l}.
		\end{equation} 
	\end{enumerate} 
\end{proposition}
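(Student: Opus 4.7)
The plan is to first fix a suitably fine Markov partition $\Ps$ with the singularities as vertices, and then shrink $\rho_0$ to meet both conditions. Condition (2) is a purely geometric observation, while condition (1) splits naturally into two sub-cases: $x \in \Lambda$, and $x \in E := \{x \notin \Us_0 : g^{-1}(x) \in \Us_0\} = g(\Us_0) \setminus \Us_0$. A key observation used throughout is that since $f$ is a homeomorphism and (after passing to an iterate) fixes every singularity, $f^{-j}(S) = S$ for all $j \in \Z$, so compact subsets of $M \setminus S$ have iterates that remain disjoint from $S$.

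For the Markov partition, apply the conjugacy $H$ to a Markov partition for $f$ from Proposition~\ref{pseudo-Anosov Markov} to obtain $\Ps = H(\tilde\Ps)$ with singularities as vertices and arbitrarily small diameter after refinement. Take the base rectangle $R$ of the tower to have no singularity as a vertex, so $R \cap S = \emptyset$. Condition (2) is then immediate: $\bigcup_{k,l} R_{k,l}$ is a closed neighborhood of $S$, so its interior contains some ball $B_{\delta_0}(S)$, and choosing $\rho_0 < \delta_0$ yields $\Us_0 \subset \mathrm{int}\bigcup_{k,l} R_{k,l}$. For condition (1) on $\Lambda$, the set $K_Q := \bigcup_{j=0}^Q f^j(R)$ is compact, and $K_Q \cap S = \bigcup_{j=0}^Q f^j(R \cap f^{-j}(S)) = \bigcup_{j=0}^Q f^j(R \cap S) = \emptyset$. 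Taking $\rho_0 < d(K_Q, S)$ gives $K_Q \cap \Us_0 = \emptyset$, and an induction using $g = f$ on $M \setminus \Us_0$ shows $g^j(x) = f^j(x) \in K_Q$ for every $x \in R \supset \Lambda$ and $0 \leq j \leq Q$.

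For condition (1) on $E$, work in the local coordinates $\psi := \phi^{-1}\circ\Phi_{kj}\circ\oldphi_k$ near a singularity $x_k$. Here $\psi(\Us_0 \cap S^s_{kj})$ is a (half-)disk of radius $\tilde r_0 = C_0\,\rho_0^{1/(1-\alpha)}$, while $G_p$ coincides with the linear hyperbolic map $(s_1, s_2) \mapsto (\lambda s_1, s_2/\lambda)$ outside $D_{r_0}$. A point $x \in E$ arises as $\psi(x) = G_p(\psi(y))$ for some $\psi(y)$ in the unstable exit region, so $\psi(x)$ satisfies $|s_1(\psi(x))| \sim \tilde r_0$ with $|s_2(\psi(x))|$ comparatively small. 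Forward iteration by $f$ (outside $\Us_0$, where $g = f$) gives $|s_1(\psi(f^j(x)))| \sim \lambda^j \tilde r_0$ and $|s_2(\psi(f^j(x)))| \to 0$, so the chart radius $\rho(f^j(x)) \sim |\psi(f^j(x))|^{1-\alpha}$ lies in an annular band $[C_1\rho_0,\, C_2 \lambda^{Q(1-\alpha)} \rho_0]$ for $0 \leq j \leq Q$. This keeps $f^j(x)$ outside $\Us_0 \cap U_k$, and for $\rho_0 < d_{\min}/(2 C_2 \lambda^{Q(1-\alpha)})$ (where $d_{\min} = \min_{k \neq k'} d(x_k, x_{k'})$), the orbit also stays outside $\Us_0 \cap U_{k'}$ for every $k' \neq k$. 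Combining all three bounds on $\rho_0$ yields the proposition.

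The main obstacle is the last paragraph: one must verify that $\psi(E)$ genuinely lies along the unstable direction so that the linearized $f$ pushes $|s_1|$ further from the origin rather than letting the contracting $|s_2|$-component drag $|\psi(\cdot)|$ back inside $\tilde r_0$, and one must carefully track the nonlinear coordinate change $\phi^{-1}\circ\Phi_{kj}$ when converting $(s_1,s_2)$-bounds back into chart-radius bounds, since the relation $r \propto \rho^{1/(1-\alpha)}$ between the two radial variables depends sensitively on the slow-down exponent $\alpha$.
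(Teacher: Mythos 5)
Your route is genuinely different from the paper's: the paper disposes of this proposition in two lines, proving the analogous statement for the homeomorphism $f$ with the fixed partition $\tilde\Ps$ and then transferring it through the conjugacy $H$, whereas you verify it directly for $g$. The direct route can be completed, but as written it contains a circularity that the transfer argument is designed to avoid: every quantity you compare $\rho_0$ against --- the partition $\Ps=H(\tilde\Ps)$, the rectangle $R$, the compact set $K_Q=\bigcup_{j\le Q}f^j(R)$, the radius $\delta_0$, and the exit set $E=g(\Us_0)\setminus\Us_0$ --- depends on $\rho_0$, because shrinking $\rho_0$ changes $g$, hence $H$, hence $\Ps$, $R$ and $K_Q$. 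So ``choose $\rho_0<d(K_Q,S)$'' and ``choose $\rho_0<\delta_0$'' are not yet legitimate choices; you need a uniformity statement (e.g.\ that $d_{C^0}(H,\mathrm{id})$ can be kept small, uniformly as $\rho_0\to0$, so that $K_Q$ and $\delta_0$ stay comparable to the corresponding quantities built from $f$ and $\tilde\Ps$), or you should carry out the estimates for $f$ and $\tilde\Ps$, which do not move with $\rho_0$, and only then conjugate --- which is essentially what the paper does.

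The second, more substantive gap is the exit-point case, which is the heart of condition (1) and which you yourself flag as ``the main obstacle'': you assert, but do not prove, that points of $E$ have unstable coordinate comparable to the exit radius and that forward iteration then keeps the orbit out of $\Us_0$ for $Q$ steps. Both facts are true, and here is what is missing. Since $\hat V_p$ is a scalar multiple of the linear field $V$, the $G_p$- and $F$-orbits traverse the same hyperbolas $s_1s_2=\mathrm{const}$, and the radial derivative along them is proportional to $s_1^2-s_2^2$; hence an orbit crosses a circle outward only where $|s_1|\ge|s_2|$, so at the first iterate outside the disc one has $|s_1|\ge(\textrm{exit radius})/\sqrt2$, and since the radius along such an orbit has a single minimum, the orbit cannot re-enter the same singular disc while the local description applies. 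Note also that $f$ is linear in the $\Phi_{kj}\circ\oldphi_k$ coordinates, not in your $\psi=\phi^{-1}\circ\Phi_{kj}\circ\oldphi_k$ coordinates, so the growth claim $|s_1(\psi(f^j(x)))|\sim\lambda^j\tilde r_0$ must be run in the former chart, with the radial map $\phi$ used only to translate the exit condition --- exactly the bookkeeping you defer. Finally, your annulus bound controls returns to $\Us_0\cap U_k$ and to other singular neighborhoods only while the orbit stays in the chart $U_k$; once it leaves $U_k$ you still must exclude a fast approach to some singularity, e.g.\ by the uniform bound that one iterate of $f$ decreases the distance to $S$ by at most a fixed factor, so reaching $\Us_0$ from a definite distance takes at least $c\log(1/\rho_0)>Q$ iterates for $\rho_0$ small. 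Until these steps are written out (or the statement is proved for $f$ and pushed through $H$), the argument is incomplete.
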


To prove this proposition, simply note that it holds for the pseudo-Anosov homeomorphism $f$. Applying the conjugacy $H$ yields the result. 

\begin{proposition}[\cite{VecPAD1}]\label{exists-Q}
	There is a $Q > 0$ such that the collection of $s$-sets $\Lambda^s_i$ satisfies Conditions (Y1) - (Y5), thus representing $g : M \to M$ as a Young diffeomorphism. 
\end{proposition}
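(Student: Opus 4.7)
The plan is to fix a Markov partition $\Ps = H(\tilde\Ps)$ and the slowdown radius $r_0$ via Proposition \ref{forall-Q} for a value of $Q$ to be pinned down once the hyperbolicity estimate is in place. With $\Lambda_i^s := H(\tilde\Lambda_i^s)$ and $\tau_i := \tilde\tau_i$, I would verify each of (Y1)--(Y5) in turn, transferring structure from the pseudo-Anosov $f$ via the conjugacy $H$ and accounting for the nonuniform hyperbolicity of $g$ introduced near the singularities.

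Conditions (Y1) and (Y2) are essentially combinatorial and require no new work beyond the analogue for $f$. The rectangle boundaries of $\Ps$ lie on the singular foliations of $g$ by Proposition \ref{prop:topological-properties}(d), and the identity $g^{\tau_i} = H \circ f^{\tilde\tau_i} \circ H^{-1}$ together with the $s$-subset structure for $\tilde\Lambda_i^s$ immediately yields the invariance and Markov property demanded in (Y1). For (Y2), $\mu_1$ being equivalent to Riemannian area (Proposition \ref{prop:SRB}) guarantees $\mu_{\gamma^u}(\gamma^u \cap \Lambda) > 0$, and the exceptional set $A^u(x)$ has one-dimensional Lebesgue measure zero by its construction as the union of Markov boundary points and orbits that never return.

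The substantive estimates are (Y3) and (Y4). For (Y3), I would decompose any return orbit $x, g(x), \ldots, g^{\tau_i}(x)$ into its portion inside $\Us_0$ and its complement. Outside $\Us_0$ the map $g$ coincides with $f$ and contracts $E^s$ by $\lambda^{-1}$, expands $E^u$ by $\lambda$; inside $\Us_0$ the invariant distributions $E^s, E^u$ persist by Proposition \ref{prop:topological-properties}(c), and because $\Psi_p \geq 0$ the slowed-down vector field still contracts along $E^s$ and expands along $E^u$ (by factors possibly arbitrarily close to $1$ near the singularity, but never reversing sign). By Proposition \ref{forall-Q} the initial segment of length at least $Q$ lies entirely outside $\Us_0$, producing stable contraction bounded above by $\lambda^{-Q}$; picking $Q$ so that $\lambda^{-Q} < a$ for a fixed $a \in (0,1)$ then gives (Y3a), and (Y3b) follows dually. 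Condition (Y4) is then obtained by telescoping $\log J^u F(x) - \log J^u F(y) = \sum_{j=0}^{\tau_i - 1}\bigl(\log |dg|_{E^u(g^j x)}| - \log |dg|_{E^u(g^j y)}|\bigr)$, invoking $C^{2+\epsilon}$ regularity of $g$ and H\"older continuity of $E^u$, and summing the resulting geometric series against the stable contraction established in (Y3); uniformity in $i$ comes from the fact that the $Q$-step buffer is independent of $i$ and the $\Us_0$-portion contributes derivative factors close to $1$.

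The main obstacle is Condition (Y5), the summability $\sum_i \tau_i \,\mu_{\gamma^u}(\Lambda_i^s) < \infty$. Because the hyperbolicity of $g$ degenerates near the singularities, orbits can be trapped arbitrarily long in $\Us_0$ before returning to $\Lambda$, so the exponential tail available for $f$ cannot be transferred directly. What is needed is a polynomial tail bound $\mu_{\gamma^u}\{\tau > n\} = O(n^{-\gamma})$ with $\gamma > 2$, whose proof requires a careful analysis of how fast trajectories of the slowed-down vector field escape the singular neighborhoods as a function of $\alpha$ and the prong number $p$. This is precisely the content of Sections 7 and 8; once the tail estimate is in hand, (Y5) reduces to the elementary $\sum_n n \cdot n^{-\gamma} < \infty$.
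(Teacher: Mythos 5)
Your outline does follow the same overall route the paper takes (the paper delegates the verification to \cite{VecPAD1}, where exactly this is done: transfer the first-return structure of $f$ through $H$, use the $Q$-step buffer of Proposition \ref{forall-Q}, and verify (Y1)--(Y5) with quantitative estimates near the singularities), but two of your steps have genuine gaps. First, the inside-$\Us_0$ part of (Y3)--(Y4) cannot be settled by ``$\Psi_p\ge 0$ so the field still contracts/expands'' plus ``$C^{2+\epsilon}$ regularity and H\"older continuity of $E^u$, then sum a geometric series.'' Within a single return the number of iterates spent in $\Us_0$ is unbounded and during those iterates there is no additional contraction, so the telescoped sum for $\log J^uF$ contains an unbounded number of terms with no geometric gain inside the block; and your assertion that the $\Us_0$-portion contributes ``derivative factors close to $1$'' is false in the relevant sense --- the unstable Jacobian accumulated over a passage of length $k$ through the neighborhood is polynomially large in $k$ (this is the content of Lemma \ref{lem:length ratio is polynomial}), and it is precisely the variation of $\log\bigl|dg|_{E^u}\bigr|$ along such passages that must be controlled. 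What is needed is the bounded-distortion analysis through the slow-down region (the analogues, for unstable separations and Jacobians, of Lemmas \ref{s1 and s2 approx}--\ref{lem:PSS Lemma 6.7}), together with care about the singular coordinate changes $\phi^{-1}\circ\Phi_{kj}\circ\oldphi_k$, whose derivatives degenerate at the singularity, so distance comparisons between $M$ and the $(s_1,s_2)$-coordinates are not automatically uniform. This is the technical core of the proof in \cite{VecPAD1} and it is missing from your sketch.

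Second, your treatment of (Y5) is off in two ways. (Y5) concerns the unstable leaf volume $\mu_{\gamma^u}$, while Sections 7--8 bound the tail with respect to $\mu_1$; this is repairable (on $R$, which is disjoint from $\Us_0$, $\mu_1$ has a product structure with densities bounded away from $0$ and $\infty$, and each $\Lambda_i^s$ has full stable height, so the two tails are comparable), but it must be said. More substantively, Lemma \ref{tail upper bound} yields a summable tail only when $\gamma'>2$, i.e. under a restriction of the type $\alpha<1/4$, $\mu<1/2$ (see \eqref{gamma-defs}), whereas the proposition carries no such restriction: it is proved in \cite{VecPAD1} for $\alpha=(p-2)/p\ge 1/3$ and is invoked here (e.g. in Proposition \ref{prop:PAD-thermodynamics}) for arbitrary $\alpha\in(0,1)$. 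The leaf tail is in fact summable for every $\alpha\in(0,1)$, but seeing this requires a direct estimate of the escape time from $\Us_0$ in terms of the entry distance to the stable prong (using conservation of $s_1s_2$ and Lemma \ref{s1 and s2 approx}), not the counting argument of Section 8; as written, your argument proves the proposition only in the small-$\alpha$ regime. If you do route (Y5) through Sections 7--8, you should also record that this is not circular --- those sections use only the construction of the sets $\Lambda_i^s$ and Proposition \ref{forall-Q}, not conditions (Y3)--(Y5) --- which is true but deserves a sentence.
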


\section{Behavior near singularities}

In this section, we consider specifically the behavior of trajectories of the system of differential equations given by \eqref{eq:slowdown vector field} in $(s_1, s_2)$-coordinates. The computations in this section pertain specifically to this system of ODEs, and have no \emph{a priori} relation to the manifold $M$, the pseudo-Anosov map $f$, or its smooth realization $g$. 

\begin{remark} 
	Many of the results on the behavior of this system of ODEs that we cite in this section are proven in \cite{PSZ17} and \cite{PSS}. In \cite{PSZ17,PSS}, they use a slowdown function $\psi : [0,1] \to \R$ for which there is an $0 < r_0 < 1$ such that for $u < (r_0/2)^2$, 
\[
\psi(u) = \left( \frac u{r_0}\right)^{\alpha}.
\]
On the other hand, the slowdown function $\Psi_p : [0,1] \to \R$ that we use has constants $0 < r_1 < r_0 < 1$ for which for $u < r_1^2$, we have
\[
\Psi_p(u) = \left( \frac p 2 \right)^{2\alpha} u^\alpha.
\]
In other words, the coefficient $r_0^{-\alpha}$ has been replaced with the coefficient $(p/2)^{2\alpha}$. For this reason, up to a constant multiple, the system of differential equations \eqref{eq:slowdown vector field} is the same as the respective system of differential equations in \cite{PSZ17,PSS}. Accordingly, the results we cite here are proven in \cite{PSZ17,PSS}, up to a multiplicative constant. Several proofs are omitted in this section in the intersest of brevity, but references are given for the respective results in \cite{PSZ17,PSS}.
\end{remark} 




Our next several lemmas concern the trajectories of solutions to equation (\ref{eq:slowdown vector field}). Let $s(t) = (s_1(t), s_2(t))$ be a solution to (\ref{eq:slowdown vector field}). Assume $s(t)$ is defined in the maximal interval $[0,T]$, for which $s(0), s(T) \in \del D_{r_1}$ and $s(t) \in D_{ r_1}$ for $0 < t < T$. Further let $T_1 = T/2$. Note $s_1(t) \leq s_2(t)$ for $0 \leq t \leq T_1$ and $s_1(t) \geq s_2(t)$ for $T_1 \leq t \leq T$. We collect lower and upper bounds on the functions $s_1(t)$ and $s_2(t)$. 

\begin{lemma}\label{s1 and s2 approx}
	Given a solution $s(t)$ to \emph{(\ref{eq:slowdown vector field})}, and $T$ and $T_1$ defined above, we have the following estimates:
	\begin{enumerate}[label=\emph{(\alph*)}]
		\item $|s_2(t)| \geq |s_2(a)| \left( 1 + 2^\alpha C_0 s_2^{2\alpha}(a)(t-a)\right)^{-1/2\alpha}$, $\quad 0 \leq a \leq t \leq T_1$; 
		\item $|s_2(t)| \leq |s_2(a)| \left( 1 + C_0 s_2^{2\alpha}(a)(t-a)\right)^{-1/2\alpha}$, $\quad 0 \leq a \leq t \leq T$;
		\item $|s_1(t)| \geq |s_1(b)| \left( 1 + 2^\alpha C_0 s_1^{2\alpha}(b)(b-t)\right)^{-1/2\alpha}$, $\quad T_1 \leq t \leq b \leq T$;
		\item $|s_1(t)| \leq |s_1(b)| \left( 1 + C_0 s_1^{2\alpha}(b)(b-t)\right)^{-1/2\alpha}$, $\quad 0 \leq t \leq b \leq T$;
	\end{enumerate} 
	where $C_0 = 2\alpha\log\lambda(p/2)^{2\alpha}.$
\end{lemma}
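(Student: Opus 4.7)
The plan is to derive all four inequalities from the autonomous differential equations satisfied by $s_2^2$ and $s_1^2$, using the explicit form of $\Psi_p$ near the origin together with elementary separation of variables. The system \eqref{eq:slowdown vector field} with $\Psi_p(u) = (p/2)^{2\alpha} u^\alpha$ on $D_{r_1}$ gives
\[
\tfrac{d}{dt}(s_2^2) = -2(\log\lambda)(p/2)^{2\alpha}\, s_2^2 \,(s_1^2+s_2^2)^\alpha, \qquad \tfrac{d}{dt}(s_1^2) = 2(\log\lambda)(p/2)^{2\alpha}\, s_1^2\, (s_1^2+s_2^2)^\alpha,
\]
and I note $2(\log\lambda)(p/2)^{2\alpha} = C_0/\alpha$. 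The key observation is that $s_1$ and $s_2$ play symmetric roles under time reversal $t \mapsto T - t$, so the bounds for $s_1$ will follow from those for $s_2$ by a straightforward relabeling; I therefore concentrate on parts (a) and (b).

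To prove (b), I would write $y(t) = s_2(t)^2$ and use the trivial inequality $s_1^2 + s_2^2 \geq s_2^2 = y$ to obtain
\[
\dot y \leq -\tfrac{C_0}{\alpha}\, y^{1+\alpha}.
\]
This separable differential inequality gives $\tfrac{d}{dt}(y^{-\alpha}) \geq C_0$, so integrating from $a$ to $t$ yields $y(t)^{-\alpha} \geq y(a)^{-\alpha} + C_0 (t-a)$. Solving for $y(t)$ and taking a square root produces (b) on the full interval $[0,T]$, since the inequality $s_1^2+s_2^2 \geq s_2^2$ holds everywhere. For (a) I use the sectoral hypothesis that $|s_1(t)| \leq |s_2(t)|$ on $[0,T_1]$, which yields $s_1^2 + s_2^2 \leq 2 s_2^2$, hence $\dot y \geq -\tfrac{2^\alpha C_0}{\alpha} y^{1+\alpha}$. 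The same separation of variables (now with the reversed inequality) gives $y(t)^{-\alpha} \leq y(a)^{-\alpha} + 2^\alpha C_0 (t-a)$, which on solving produces (a). The restriction $0 \leq a \leq t \leq T_1$ is exactly what is needed for the bound $s_1^2 \leq s_2^2$ to hold.

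For (c) and (d), I reverse time via $u = b - t$ and apply the same argument to $x(u) = s_1(b-u)^2$, which satisfies
\[
\tfrac{dx}{du} = -\tfrac{C_0}{\alpha}\, x \,(s_1^2+s_2^2)^\alpha.
\]
The upper bound $s_1^2 \leq s_1^2 + s_2^2$ (valid everywhere) gives $\tfrac{dx}{du} \leq -\tfrac{C_0}{\alpha} x^{1+\alpha}$, and integrating as before yields (d) on all of $[0,b] \subseteq [0,T]$. The bound $s_1^2 + s_2^2 \leq 2 s_1^2$ holds on $[T_1,T]$, and applied in the time-reversed equation it produces $\tfrac{dx}{du} \geq -\tfrac{2^\alpha C_0}{\alpha} x^{1+\alpha}$, from which (c) follows, with the interval restriction $T_1 \leq t \leq b \leq T$ precisely matching where the sector inequality is valid.

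No step presents a serious obstacle; the only point requiring care is matching each sector inequality ($|s_1|\leq|s_2|$ versus $|s_2|\leq|s_1|$) with the correct interval and with the correct direction (upper versus lower bound) of the resulting differential inequality, and then tracking the signs through the time-reversal in (c)--(d). Everything else is routine separation of variables and algebra, and the appearance of the constant $C_0 = 2\alpha \log\lambda\, (p/2)^{2\alpha}$ is a direct consequence of the exponent $-1/\alpha$ produced by integrating $y^{-1-\alpha}\, dy$.
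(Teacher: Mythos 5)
Your proof is correct and follows essentially the same route as the paper: both derive the bounds by combining $s_i^2 \leq s_1^2+s_2^2$ (everywhere) and $s_1^2+s_2^2 \leq 2\max\{s_1^2,s_2^2\}$ (on the appropriate half of $[0,T]$) with separation of variables for the resulting differential inequalities. Working with $y=s_i^2$ and an explicit time reversal for (c)--(d), rather than integrating $s_i^{-2\alpha-1}\,ds_i$ directly over $[a,b]$ as the paper does, is only a cosmetic difference.
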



\begin{proof}
	Assume $s_1(t), s_2(t) > 0$ for all $0 \leq t \leq T$. Equation \ref{eq:slowdown vector field} with $\Psi_p(u) = (p/2)^{2\alpha} u^\alpha$ for $0 \leq \alpha \leq r_1^2$ gives us, for $0 \leq t \leq T$ and $i = 1,2$, 
	\begin{equation}\label{eq:PSZlem5.4i}
		\frac{ds_i}{dt} = (-1)^{i+1}\log\lambda \left( \frac p 2\right)^{2\alpha} s_i \left(s_1^2 + s_2^2\right)^{\alpha}
	\end{equation}
	Since $s_i^2 \leq s_1^2 + s_2^2$, we have 
	\[
	\frac{ds_1}{dt} \geq \log\lambda \left( \frac p 2 \right)^{2\alpha} s_1^{2\alpha+1} \quad \textrm{and} \quad \frac{ds_2}{dt} \leq -\log\lambda\left( \frac p 2 \right)^{2\alpha} s_2^{2\alpha+1}.
	\]
	In particular, this implies
	\begin{equation}\label{eq:PSZlem5.4ii}
		s_1(t)^{-2\alpha-1} \frac{ds_1(t)}{dt} \geq \log\lambda\left(\frac p 2 \right)^{2\alpha} \quad \textrm{and} \quad s_2(t)^{-2\alpha-1} \frac{ds_2(t)}{dt} \leq -\log\lambda\left(\frac p 2 \right)^{2\alpha} 
	\end{equation}
	Integrating the inequalities in \eqref{eq:PSZlem5.4ii} over the interval $[a,b] \subset [0,T]$ yields
	\[
		-\frac{1}{2\alpha} \left( s_1(b)^{-2\alpha} - s_1(a)^{-2\alpha}\right) \geq \log\lambda\left( \frac p 2 \right)^{2\alpha} (b-a)
		\]
	and
	\[
	-\frac{1}{2\alpha} \left( s_2(b)^{-2\alpha} - s_2(a)^{-2\alpha}\right) \leq -\log\lambda\left( \frac p 2 \right)^{2\alpha} (b-a),
	\]
	or in other words, 
	\[
	s_1(b)^{-2\alpha} - s_1(a)^{-2\alpha} \leq - C_0(b-a) \quad \textrm{and} \quad s_2(b)^{-2\alpha} - s_2(\alpha)^{-2\alpha} \geq C_0(b-a).
	\]
	Inequalities (b) and (d) all follow by setting $t=a$ or $t=b$. 
	
	Now, for $0 \leq t \leq T_1$, we have $s_2(t) \geq s_1(t)$, and for $T_1 \leq t \leq T$, we have $s_1(t) \geq s_2(t)$. So, 
	\[
	s_1^2 + s_2^2 \leq 2s_2^2 \quad \textrm{for } 0 \leq t \leq T_1
	\]
	and
	\[
	s_1^2 + s_2^2 \leq 2s_1^2 \quad \textrm{for } t_1 \leq t \leq T.
	\]
	It follows from \eqref{eq:PSZlem5.4i} that
	\[
	\frac{ds_2}{dt} \geq -2^\alpha\log\lambda\left(\frac p 2 \right)^{2\alpha} s_2^{2\alpha+1} \quad \textrm{for } 0 \leq t \leq T_1
	\]
	and
	\[
	\frac{ds_1}{dt} \leq 2^\alpha\log\lambda\left( \frac p 2 \right)^{2\alpha} s_1^{2\alpha+1} \quad \textrm{for } T_1 \leq t \leq T.
	\]
	Inequalities (a) and (c) can now be proven in a similar way to inequalities (b) and (d). 
\end{proof}

Consider another solution $\tilde s(t)$ of Equation (\ref{eq:slowdown vector field}) for which $s(0)$ and $\tilde s(0)$ lie in the same quadrant. Set $\Delta s(t) = \tilde s(t) - s(t)$ and $\Delta s_j(t) = \tilde s_j(t) - s_j(t)$, $j=1,2$. 



\begin{lemma}[\cite{PSZ17}, Lemma 5.3 and erratum]\label{spread-in-slowdown lemma}
	Suppose $s_1(t) \neq 0 \neq s_2(t)$ for $t \in [0,T]$ and that $|\tilde s_2(t)| > | s_2(t)|$ for $t \in [0,T]$. Suppose further that $0 < \mu < 1$ satisfies
	\begin{enumerate}[label=\emph{(\arabic*)}]
		\item $\Delta s_2(t) > 0$ and $\left| \Delta s_1(t)\right| \leq \mu |\Delta s_2(t)|$ for $t \in [0,T]$; 
		\item $\left| \frac{\Delta s_2(0)}{s_2(0)}\right| \leq \frac{1-\mu}{72}$. 
	\end{enumerate}
	Then, 
		\begin{align*}
		\Delta s_2(t) &\leq \left|\frac{\Delta s_2(0)}{s_2(0)}\right| |s_2(t)| \left(1 + 2^{\alpha}C_0|s_2(0)|^{2\alpha}t\right)^{-\beta}, & 0 \leq t \leq T_1, \\
		\Delta s_2(t) &\leq \left|\frac{\Delta s_2(T_1)}{s_1(T_1)}\right| |s_1(t)| \left( \frac{1+2^{\alpha}C_0 |s_1(b)|^{2\alpha}(b-t)}{1+2^{\alpha}C_0 |s_1(b)|^{2\alpha}(b-T_1)} \right)^{\beta}, & T_1 \leq t \leq b \leq T, 
		\end{align*}
	where $\beta = \frac{1-\mu}{2^{\alpha+2}}$,
	and $C_0$ is the constant from Lemma \ref{s1 and s2 approx}. Furthermore, 
	\begin{equation}\label{deviation bound}
	\norm{\Delta s(T)} \leq \sqrt{1+\mu^2} \left| \frac{s_1(T)}{s_2(0)}\right| \norm{\Delta s(0)}. 
	\end{equation}
\end{lemma}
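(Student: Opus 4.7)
The plan is to derive an explicit differential inequality for $\Delta s_2(t)$ by subtracting the two flow equations and integrate it against the a priori bounds from Lemma \ref{s1 and s2 approx}. Without loss of generality, assume $s_2, \tilde s_2 > 0$. Using $\Psi_p(u) = (p/2)^{2\alpha}u^{\alpha}$ (valid since trajectories stay in $D_{r_1}$), subtracting the $s_2$-equations in \eqref{eq:slowdown vector field} yields
\begin{equation*}
\frac{d\,\Delta s_2}{dt} = -c\,\Big[\,\Delta s_2\,(\tilde s_1^2+\tilde s_2^2)^{\alpha} + s_2\big((\tilde s_1^2+\tilde s_2^2)^{\alpha}-(s_1^2+s_2^2)^{\alpha}\big)\,\Big],
\end{equation*}
where $c = (\log\lambda)(p/2)^{2\alpha}$. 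The first term drives $\Delta s_2$ to decay at essentially the same rate as $|s_2|$; the second ``cross'' term will be controlled by applying the mean value theorem to $u\mapsto u^{\alpha}$, which produces a factor $\alpha\xi^{\alpha-1}(2 s_1\Delta s_1 + 2 s_2\Delta s_2 + \Delta s_1^2+\Delta s_2^2)$ for some intermediate $\xi$.

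The central technical step is to combine this MVT estimate with hypotheses (1) and (2) and the inequality $|s_1(t)|\le |s_2(t)|$ on $[0,T_1]$ (so $s_1^2+s_2^2\le 2s_2^2$) to extract a logarithmic differential inequality of the form
\begin{equation*}
\frac{d}{dt}\log\frac{\Delta s_2(t)}{|s_2(t)|}\;\le\; -\,2^{\alpha}\beta\,C_0\,|s_2(t)|^{2\alpha},\qquad \beta=\frac{1-\mu}{2^{\alpha+2}},
\end{equation*}
where $C_0$ is the constant from Lemma \ref{s1 and s2 approx}. The factor $1-\mu$ comes from the partial cancellation between the positive cross-contribution $2 s_2\Delta s_2$ and the worst-case negative contribution $2 s_1\Delta s_1$, which is bounded using $|\Delta s_1|\le\mu\Delta s_2$; hypothesis (2) is then precisely what guarantees that the quadratic-in-$\Delta s$ remainder from the MVT is absorbed into the dominant linear term with only this $1-\mu$ loss. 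Integrating the inequality from $0$ to $t$ and inserting the explicit lower bound for $|s_2(t)|$ from Lemma \ref{s1 and s2 approx}(a) yields the first claimed estimate. The second estimate on $[T_1,b]$ follows by the symmetric argument, swapping the roles of $s_1$ and $s_2$ and invoking Lemma \ref{s1 and s2 approx}(c).

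For the deviation bound \eqref{deviation bound}, the plan is to chain the two intermediate estimates at the midpoint $T_1$, where $|s_1(T_1)|\approx|s_2(T_1)|$. The telescoping of $|s_2(T_1)|/|s_2(0)|$ with $|s_1(T)|/|s_1(T_1)|$ collapses to $|s_1(T)|/|s_2(0)|$, which is precisely the hyperbolic expansion factor along the trajectory; the prefactor $\sqrt{1+\mu^2}$ then appears from reconstructing $\norm{\Delta s(T)}$ from its two components via hypothesis (1). The main obstacle will be careful bookkeeping of constants in the MVT step: the derivative $\alpha u^{\alpha-1}$ is singular as the trajectory approaches the origin, and must be balanced against the linear decay of $\Delta s_2$ so that the quadratic error does not compound across the integration. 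The specific smallness threshold $(1-\mu)/72$ in hypothesis (2) is designed for exactly this balance, and the argument is a direct adaptation of the proof of Lemma 5.3 in \cite{PSZ17} (together with its erratum), with the substitution of our coefficient $(p/2)^{2\alpha}$ for the $r_0^{-\alpha}$ appearing there.
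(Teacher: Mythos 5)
The paper does not actually prove this lemma: it is quoted from \cite{PSZ17} (Lemma 5.3 and its erratum), and the remark at the start of Section 6 explains why the citation applies here, namely that the slowdown ODE \eqref{eq:slowdown vector field} differs from the one in \cite{PSZ17,PSS} only in that the coefficient $r_0^{-\alpha}$ is replaced by $(p/2)^{2\alpha}$, so the PSZ argument transfers verbatim with the constant $C_0$ adjusted. Your outline is consistent with that cited proof in strategy (subtract the two $s_2$-equations, apply the mean value theorem to $u\mapsto u^\alpha$, derive a differential inequality for $\log(\Delta s_2/|s_2|)$, integrate against the bounds of Lemma \ref{s1 and s2 approx}), and your chaining at $T_1$ together with hypothesis (1) does reproduce \eqref{deviation bound}, using $|s_1(T_1)|=|s_2(T_1)|$ and $\norm{\Delta s(T)}\le\sqrt{1+\mu^2}\,\Delta s_2(T)$.

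Judged as a proof, however, the central step is asserted rather than established. The inequality $\frac{d}{dt}\log\frac{\Delta s_2}{|s_2|}\le -2^{\alpha}\beta C_0|s_2|^{2\alpha}$ with the specific $\beta=\frac{1-\mu}{2^{\alpha+2}}$ requires showing that all error terms in the MVT expansion --- the quadratic terms $\Delta s_1^2+\Delta s_2^2$, the replacement of the intermediate point $\xi$ by $s_1^2+s_2^2\le 2s_2^2$, and the difference $(\tilde s_1^2+\tilde s_2^2)^\alpha-(s_1^2+s_2^2)^\alpha$ in the first bracket --- are absorbed within the slack between the naive coefficient $2^{\alpha-1}(1-\mu)C_0$ and the claimed $\frac{1-\mu}{4}C_0$; you gesture at this but do not carry it out. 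More importantly, hypothesis (2) only controls $|\Delta s_2(0)/s_2(0)|$ at $t=0$, while the absorption must hold at every $t\in[0,T_1]$; this needs a bootstrap/continuity argument (the estimate being proven shows the ratio $\Delta s_2(t)/|s_2(t)|$ does not increase, so the smallness propagates), which is precisely the delicate point the cited erratum of \cite{PSZ17} addresses, and it is missing from your sketch. Finally, the second estimate on $[T_1,b]$ is not obtained by ``swapping $s_1$ and $s_2$'': the tracked quantity is still $\Delta s_2$, while $s_1$ is now the dominant, growing coordinate; one must instead run the same log-derivative inequality on $[T_1,t]$ and integrate using Lemma \ref{s1 and s2 approx}(c) evaluated backwards from $b$, which is what produces the ratio form with exponent $+\beta$. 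With these three points supplied, your argument would indeed be a faithful reconstruction of the proof the paper delegates to \cite{PSZ17}.
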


Given an exponent $0 < \alpha < 1$ and a parameter $0 < \mu < 1$ as in Lemma \ref{spread-in-slowdown lemma}, define
\begin{equation}\label{gamma-defs}
\gamma = \frac{1}{2\alpha} + 2^{\alpha-1}(1+\mu) + \frac{1-\mu}{6} \quad \textrm{and} \quad \gamma' = \frac{1}{2\alpha} + \frac{1-\mu}{2^{\alpha+2}}. 
\end{equation}
Note $\gamma > \gamma' > 2$ for $0 < \alpha < 1/4$ and $0 < \mu < 1/2$. 

\begin{lemma}[\cite{PSS}, Lemma 6.4]\label{lem:PSSlem6.4}
Under the assumptions of Lemma \ref{spread-in-slowdown lemma}, there is a $C_1 > 0$ for which for any $0 \leq t \leq T_1$, 
\[
|\Delta s_2(t)| \leq C_1|\Delta s_2(0)|t^{-\gamma'}.
\]
\end{lemma}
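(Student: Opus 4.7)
The strategy is to combine the first inequality in Lemma \ref{spread-in-slowdown lemma} with the upper bound on $|s_2(t)|$ provided by Lemma \ref{s1 and s2 approx}(b), and then convert the resulting shifted-power factors $(1+c t)^{-q}$ into pure powers $t^{-q}$ via the elementary inequality $1 + ct \geq ct$ valid for $c, t > 0$. Since both of the relevant exponents sum to exactly $\gamma' = \frac{1}{2\alpha} + \beta$, this immediately produces the desired $t^{-\gamma'}$ factor.

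\textbf{Step 1.} Apply Lemma \ref{spread-in-slowdown lemma} (first estimate, with $a=0$) to obtain, for $0 \leq t \leq T_1$,
\[
|\Delta s_2(t)| = \Delta s_2(t) \leq \left|\tfrac{\Delta s_2(0)}{s_2(0)}\right| |s_2(t)| \bigl(1 + 2^{\alpha} C_0 |s_2(0)|^{2\alpha} t\bigr)^{-\beta}.
\]
Then apply Lemma \ref{s1 and s2 approx}(b) with $a = 0$ to bound $|s_2(t)|$, yielding
\[
|\Delta s_2(t)| \leq |\Delta s_2(0)| \bigl(1 + C_0 |s_2(0)|^{2\alpha} t\bigr)^{-1/(2\alpha)} \bigl(1 + 2^{\alpha} C_0 |s_2(0)|^{2\alpha} t\bigr)^{-\beta}.
\]

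\textbf{Step 2.} For $t > 0$ use $(1 + ct)^{-q} \leq (ct)^{-q}$ on each factor separately. This produces
\[
|\Delta s_2(t)| \leq |\Delta s_2(0)| \cdot 2^{-\alpha\beta} \bigl(C_0 |s_2(0)|^{2\alpha}\bigr)^{-1/(2\alpha)-\beta} \, t^{-1/(2\alpha)-\beta}
\;=\; \widetilde{C} \, |s_2(0)|^{-1 - 2\alpha\beta}\, |\Delta s_2(0)|\, t^{-\gamma'}.
\]
Since the solution enters $D_{r_1}$ at $\partial D_{r_1}$ in the regime $s_2(0) \geq s_1(0) > 0$, the quantity $|s_2(0)|$ is bounded below by $r_1/\sqrt{2}$ and above by $r_1$, so the prefactor is absorbed into a uniform constant $C_1$. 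At $t=0$ the claimed bound is vacuous since $t^{-\gamma'} = \infty$, so there is nothing to check.

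\textbf{Main obstacle.} The argument is essentially a mechanical combination of the two previous lemmas; the only subtle point is verifying that the exponents from Lemma \ref{spread-in-slowdown lemma} and Lemma \ref{s1 and s2 approx}(b) add up to exactly $\gamma'$ (they do, since $\gamma' = \frac{1}{2\alpha} + \beta$ by definition \eqref{gamma-defs}), and checking that $|s_2(0)|$ is uniformly bounded away from $0$ over all admissible entry points so that the constant $C_1$ does not depend on the particular trajectory. No genuine analytic difficulty arises beyond what is already present in the two lemmas being invoked.
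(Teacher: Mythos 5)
Your argument is correct, and it is essentially the intended one: the paper gives no proof of this lemma (it is quoted from \cite{PSS}, Lemma 6.4), and the natural proof is exactly what you do --- multiply the first estimate of Lemma \ref{spread-in-slowdown lemma} by the decay bound of Lemma \ref{s1 and s2 approx}(b) and observe that the exponents $\tfrac{1}{2\alpha}$ and $\beta=\tfrac{1-\mu}{2^{\alpha+2}}$ sum to $\gamma'$ by \eqref{gamma-defs}. The one genuinely delicate point, which you correctly flag, is that replacing $(1+ct)^{-q}$ by $(ct)^{-q}$ makes $C_1$ depend on a \emph{lower} bound for $|s_2(0)|$; this is legitimate here because of the standing convention of the section ($s(0)\in\partial D_{r_1}$ together with $|s_1(t)|\le|s_2(t)|$ on $[0,T_1]$ gives $|s_2(0)|\ge r_1/\sqrt2$), and unlike in the lower-bound counterpart (Lemma \ref{lem:PSS Lemma 6.6}, where the paper only needs $|s_2(0)|\le r_1$ and splits into $t<1$ and $t\ge1$) it cannot be dispensed with, since for $t\ge 1$ the factor $t/(1+C_0|s_2(0)|^{2\alpha}t)$ is of order $|s_2(0)|^{-2\alpha}$. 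So your proof is complete; if you wanted to mirror the paper's style in Lemma \ref{lem:PSS Lemma 6.6}, you could instead note that for $t\le 1$ both parenthetical factors are $\le 1$ so the bound holds trivially with $C_1=1$, and reserve the $1+ct\ge ct$ step for $t\ge1$, but this changes nothing essential.
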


\color{black}

\begin{lemma}[\cite{PSS}, Lemma 6.5]\label{lem:PSS-6.5}
	Under the assumptions of Lemma \ref{spread-in-slowdown lemma}, one has 
	\begin{align*}
	\Delta s_2(t) &\geq \left|\frac{\Delta s_2(0)}{s_2(0)}\right| |s_2(t)| \left(1 + C_0 |s_2(0)|^{2\alpha}t\right)^{-\beta_1}, &0 \leq t \leq T_1; \\
	\Delta s_2(t) &\geq \left|\frac{\Delta s_2(T_1)}{s_1(T_1)}\right| |s_1(t)| \left(1+C_0 |s_1(T_1)|^{2\alpha}(t-T_1)\right)^{-\beta_2}, & T_1 \leq t \leq T,
	\end{align*}
	where 
	\[
	\beta_1 = (1+\mu)2^{\alpha-1} + \frac{1-\mu}6 \quad \textrm{and} \quad \beta_2 = \beta_1 + \frac{2\alpha}{\alpha}.
	\]
\end{lemma}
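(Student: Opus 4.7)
The plan is to prove this lemma by reversing the direction of the inequalities used in the proof of the upper bound in Lemma \ref{spread-in-slowdown lemma}. The starting point is the differential equation
\begin{equation*}
\frac{d}{dt}(\Delta s_2) = -(\log\lambda)\Psi_p(s_1^2+s_2^2)\,\Delta s_2 - (\log\lambda)\tilde s_2\bigl[\Psi_p(\tilde s_1^2+\tilde s_2^2) - \Psi_p(s_1^2+s_2^2)\bigr],
\end{equation*}
obtained by subtracting the ODEs \eqref{eq:slowdown vector field} for $s_2$ and $\tilde s_2$ and adding and subtracting $\tilde s_2\,\Psi_p(s_1^2+s_2^2)$. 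The first summand is the ``linear part,'' whose integral will produce the $|s_2(t)|/|s_2(0)|$ (resp.\ $|s_1(t)|/|s_1(T_1)|$) factor in the conclusion, and the second is a nonlinear correction that must be controlled from above so that $\Delta s_2$ does not decay faster than claimed.

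For $0\leq t\leq T_1$, where $|s_2|\geq|s_1|$, I would bound the nonlinear term using the mean value theorem applied to $u\mapsto u^\alpha$, together with monotonicity of $u^{\alpha-1}$ (since $\alpha<1$), obtaining
\begin{equation*}
\Psi_p(\tilde s_1^2+\tilde s_2^2) - \Psi_p(s_1^2+s_2^2) \leq \alpha (p/2)^{2\alpha}(s_1^2+s_2^2)^{\alpha-1}\bigl[(\tilde s_1^2-s_1^2)+(\tilde s_2^2-s_2^2)\bigr].
\end{equation*}
Using hypothesis (1) of Lemma \ref{spread-in-slowdown lemma} ($|\Delta s_1|\leq\mu\,\Delta s_2$) together with hypothesis (2), which guarantees $\tilde s_i+s_i$ is close to $2s_i$, the bracket is controlled by a multiple of $s_2\,\Delta s_2(1+\mu)$. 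Combining with the bound $\Psi_p(s_1^2+s_2^2)\leq(p/2)^{2\alpha}(2s_2^2)^\alpha$ gives a differential inequality of the form
\begin{equation*}
\frac{d}{dt}\log\Delta s_2(t) \geq -(\log\lambda)(p/2)^{2\alpha}\bigl[2^\alpha + 2\alpha(1+\mu) + \text{error}\bigr]\,s_2(t)^{2\alpha},
\end{equation*}
where the error term collects the slack from $\tilde s_2/s_2\approx 1$. Integrating against the upper bound on $s_2(t)^{2\alpha}$ from Lemma \ref{s1 and s2 approx}(b), and using $C_0=2\alpha\log\lambda(p/2)^{2\alpha}$, produces a polynomial lower bound whose exponent can be split, using the lower bound on $|s_2(t)|$ from Lemma \ref{s1 and s2 approx}(a), into a factor $|s_2(t)|/|s_2(0)|$ plus an extra exponent equal to $\beta_1$. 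The regime $T_1\leq t\leq T$ is handled symmetrically, with $s_1$ and $s_2$ swapped and Lemma \ref{s1 and s2 approx}(c)--(d) replacing (a)--(b); the extra additive $2$ in $\beta_2$ arises because the coarse lower bound on $|s_1|$ from Lemma \ref{s1 and s2 approx}(c) carries a factor of $2^\alpha$ inside the base of its polynomial rate, which costs the corresponding power when extracted.

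The main obstacle I anticipate is the precise bookkeeping required to produce the exact numerical constants $(1+\mu)2^{\alpha-1}$ and $(1-\mu)/6$ appearing in $\beta_1$. The small slack $(1-\mu)/72$ built into hypothesis (2) of Lemma \ref{spread-in-slowdown lemma} is tailored so that the various error terms (from $\tilde s_2/s_2\ne 1$ and $\tilde s_i+s_i\ne 2 s_i$) fit inside the summand $(1-\mu)/6$, but matching these constants requires careful tracking of several small corrections simultaneously. Since the estimate is structurally symmetric to the bookkeeping performed for the corresponding upper bound in Lemma \ref{spread-in-slowdown lemma} and its erratum in \cite{PSZ17}, that proof serves as a direct template for both regimes here.
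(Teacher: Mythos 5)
There is nothing in the paper to compare your argument against: the paper does not prove this lemma, but quotes it from \cite{PSS} (Lemma 6.5), with the remark at the start of Section 6 explaining that \eqref{eq:slowdown vector field} coincides with the system treated there up to a multiplicative constant. So your proposal has to stand on its own, and as written it has a genuine gap: the bookkeeping you outline provably cannot reach the stated exponents, so the difficulty you defer (``matching the constants'') is not a matter of tracking small corrections. Two order-one losses are built into the outline. First, bounding the linear coefficient $\Psi_p(s_1^2+s_2^2)$ by $(p/2)^{2\alpha}(2s_2^2)^{\alpha}$ and only afterwards splitting off the factor $|s_2(t)|/|s_2(0)|\leq(1+C_0|s_2(0)|^{2\alpha}t)^{-1/(2\alpha)}$ costs an extra $\frac{2^{\alpha}-1}{2\alpha}$ (about $\frac{\ln 2}{2}$ for small $\alpha$) in the final exponent. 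Second, bounding the bracket by $2(1+\mu)s_2\Delta s_2$ while separately replacing $(s_1^2+s_2^2)^{\alpha-1}$ by $s_2^{2\alpha-2}$ gives the coefficient $2\alpha(1+\mu)$, hence $(1+\mu)$ in the exponent, instead of the needed $2^{\alpha-1}(1+\mu)$. Carrying out your integration therefore yields at best the gap exponent $\frac{2^{\alpha}-1}{2\alpha}+(1+\mu)$, which exceeds $\beta_1=2^{\alpha-1}(1+\mu)+\frac{1-\mu}{6}$ by more than $\frac12$ throughout the range $0<\alpha<\frac14$, $0<\mu<\frac12$; that excess cannot be absorbed into the $\frac{1-\mu}{6}$ slack coming from hypothesis (2) of Lemma \ref{spread-in-slowdown lemma}. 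The first loss is avoided by working with $\log(\Delta s_2/s_2)$ rather than $\log\Delta s_2$: since $\frac{d}{dt}\log s_2=-(\log\lambda)\Psi_p(s_1^2+s_2^2)$, the linear part cancels identically and only the correction term $(\log\lambda)\,\tilde s_2\,[\Psi_p(\tilde s_1^2+\tilde s_2^2)-\Psi_p(s_1^2+s_2^2)]/\Delta s_2$ must be integrated (this is how the arguments in \cite{PSZ17,PSS} are organized); the second requires estimating the coupled quantity $s_2(s_2+\mu|s_1|)(s_1^2+s_2^2)^{\alpha-1}$ as a whole, whose value at $|s_1|=|s_2|$ is exactly $2^{\alpha-1}(1+\mu)s_2^{2\alpha}$.

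Even after those repairs, your claim that the upper-bound proof ``serves as a direct template'' is not justified. When $|s_1|\ll|s_2|$ the coupled quantity above is approximately $s_2^{2\alpha}$, with coefficient $1>2^{\alpha-1}(1+\mu)$ in the paper's parameter range, and an orbit entering $D_{r_1}$ very close to the stable prong spends essentially all of $[0,T_1]$ in that regime; on the prong itself the time-$t$ map is explicit and its derivative equals $\frac{s_2(t)}{s_2(0)}(1+C_0|s_2(0)|^{2\alpha}t)^{-1}$, so the sharp gap exponent there is $1$, not $\beta_1$. Hence no pointwise-in-time estimate of the kind you sketch can produce an exponent smaller than $1$, and recovering the stated $\beta_1$ (if it is attainable) must use finer information about how the transit time is distributed between the regimes $|s_1|\ll|s_2|$ and $|s_1|\sim|s_2|$ -- this is exactly the content hidden in \cite{PSS}. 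Your heuristic for the additive $2$ in $\beta_2$ is also off: replacing $C_0$ by $2^{\alpha}C_0$ inside the base of the polynomial costs only a bounded multiplicative constant, never an additive $2$ in the exponent. In sum, the proposal is a reasonable plan for a weaker inequality with a larger exponent (which would degrade $\gamma$ in \eqref{gamma-defs} and hence the lower bound in Theorem \ref{Main 1}(4)(b)), but it does not prove the lemma as stated.
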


\begin{lemma}\label{lem:PSS Lemma 6.6}
	Under the assumptions of Lemma \ref{spread-in-slowdown lemma}, there exists a $C_2 > 0$ for which for any $0 \leq t \leq T_1$,
	\begin{align*}
	|\Delta s_2(t)| &\geq C_2 |\Delta s_2(0)|,  &0 < t < 1, \\
	|\Delta s_2(t)| &\geq C_2 |\Delta s_2(0)| t^{-\gamma}, & t \geq 1.
	\end{align*}
\end{lemma}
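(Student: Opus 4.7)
The plan is to combine the lower bound on $\Delta s_2(t)$ given by Lemma \ref{lem:PSS-6.5} with the lower bound on $|s_2(t)|$ given by Lemma \ref{s1 and s2 approx}(a), and then to extract the claimed polynomial rate by an elementary inequality. Applying Lemma \ref{lem:PSS-6.5} on $[0,T_1]$ and then Lemma \ref{s1 and s2 approx}(a) with $a=0$, one obtains
\begin{equation*}
|\Delta s_2(t)| \;\geq\; |\Delta s_2(0)|\, \bigl(1 + 2^{\alpha} C_0 |s_2(0)|^{2\alpha} t\bigr)^{-1/(2\alpha)} \bigl(1 + C_0 |s_2(0)|^{2\alpha} t\bigr)^{-\beta_1},
\end{equation*}
so everything reduces to bounding the product of these two $t$-dependent factors from below in the two regimes $0<t<1$ and $t\geq 1$. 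Note that $s(0)\in D_{r_1}$ implies $|s_2(0)|\leq r_1$, so the quantities $C_0|s_2(0)|^{2\alpha}$ and $2^\alpha C_0|s_2(0)|^{2\alpha}$ are bounded by universal constants depending only on $\alpha$, $p$, and $\lambda$.

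For $0<t<1$, both factors in the displayed estimate are at least $(1+2^{\alpha}C_0 r_1^{2\alpha})^{-1/(2\alpha)}$ and $(1+C_0 r_1^{2\alpha})^{-\beta_1}$ respectively, giving $|\Delta s_2(t)|\geq C_2 |\Delta s_2(0)|$ with a constant $C_2$ depending only on the parameters $\alpha$, $\mu$, $p$, $\lambda$, $r_1$.

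For $t\geq 1$, the key observation is the elementary inequality $1+ax \leq (1+a) x$ valid for all $x\geq 1$ and $a>0$, which yields
\begin{equation*}
(1 + a x)^{-c} \;\geq\; (1+a)^{-c}\, x^{-c}
\end{equation*}
for any $c>0$. Applying this with $a = 2^{\alpha}C_0|s_2(0)|^{2\alpha}$ and $c=1/(2\alpha)$ for the first factor, and with $a = C_0|s_2(0)|^{2\alpha}$ and $c=\beta_1$ for the second, and again using $|s_2(0)|\leq r_1$ to absorb the $a$-dependence into a universal constant, produces
\begin{equation*}
|\Delta s_2(t)| \;\geq\; C_2\, |\Delta s_2(0)|\, t^{-1/(2\alpha)-\beta_1} \;=\; C_2\, |\Delta s_2(0)|\, t^{-\gamma},
\end{equation*}
where the exponent is exactly $\gamma = \tfrac{1}{2\alpha} + 2^{\alpha-1}(1+\mu) + \tfrac{1-\mu}{6}$ as in \eqref{gamma-defs}. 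Shrinking $C_2$ if necessary to cover both ranges simultaneously gives the lemma.

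There is essentially no serious obstacle here; the work is in tracking that the constants depend only on fixed parameters of the construction and not on the particular trajectory. The one place to be a little careful is that Lemma \ref{lem:PSS-6.5} is stated for $t\in[0,T_1]$, so the argument only delivers the claim on that interval (which is exactly what the statement asks for); extending to $[T_1,T]$ would instead require the second inequality of Lemma \ref{lem:PSS-6.5} together with Lemma \ref{s1 and s2 approx}(c), but this is not needed.
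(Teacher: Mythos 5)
Your proposal is correct and follows essentially the same route as the paper: both combine the first inequality of Lemma \ref{lem:PSS-6.5} with Lemma \ref{s1 and s2 approx}(a) at $a=0$, bound the resulting factors by constants for $0<t<1$ using $|s_2(0)|\leq r_1$, and use the inequality $1+At\leq(1+A)t$ for $t\geq 1$ to extract the rate $t^{-\gamma}$ with $\gamma=\beta_1+\tfrac{1}{2\alpha}$. The only cosmetic difference is that the paper first absorbs the factor $(1+C_0|s_2(0)|^{2\alpha}t)^{-\beta_1}$ into $(1+2^{\alpha}C_0|s_2(0)|^{2\alpha}t)^{-\beta_1-1/(2\alpha)}$ before this step, while you treat the two factors separately, which changes nothing of substance.
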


\begin{proof}
By inequality (a) in Lemma \ref{s1 and s2 approx} and the first inequality in Lemma \ref{lem:PSS-6.5}, for $0 < t < T_1$, we have: 
\begin{align*}
\Delta s_2(t) &\geq \left|\frac{\Delta s_2(0)}{s_2(0)}\right| |s_2(t)|\left( 1 + C_0 |s_2(0)|^{2\alpha}t\right)^{-\beta_1}\\
&\geq | \Delta s_2(0)| \left( 1+2^{\alpha}C_0 |s_2(0)|^{2\alpha}t\right)^{-\beta_1 - 1/2\alpha}.
\end{align*}
For $0 < t < 1$, since $|s_2(0)| \leq r_1$, we're done by setting $$C_2 = \left( 1 + 2^{(p-2)/p} C_0 r_1^{(2p-4)/p} \right)^{-\beta_1 - 1/2\alpha}.$$ For $t \geq 1$, since $1+At \leq (1+A)t$ for $A > 0$, we have 
\begin{align*}
|\Delta s_2(t)| &\geq | \Delta s_2(0)| \left( 1+2^{\alpha}C_0 |s_2(0)|^{2\alpha}\right)^{-\beta_1 - 1/2\alpha} t^{-\beta_1 - 1/2\alpha}.
\end{align*}
Noting $\gamma = \beta_1 + \frac 1{2\alpha}$, the same $C_2$ as in the $t < 1$ case satisfies the second estimate in the lemma. 
\end{proof}

\begin{lemma}[\cite{PSS}, Lemma 6.7]\label{lem:PSS Lemma 6.7}
Under the assumptions of Lemma \ref{spread-in-slowdown lemma}, there exist $C_3, C_4 > 0$ such that 
\[
C_3 \Delta s_2(T_1) \geq \Delta s_2(T) \geq C_4\Delta s_2(T_1).
\]
\end{lemma}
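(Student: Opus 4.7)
The plan is to specialize the fast-phase estimates of Lemma \ref{spread-in-slowdown lemma} and Lemma \ref{lem:PSS-6.5} to the endpoint $t = T$, where the only admissible choice of the free parameter is $b = T$, and then to control the resulting geometric prefactor $|s_1(T)|/|s_1(T_1)|$ using Lemma \ref{s1 and s2 approx} together with the symmetry $(s_1, s_2, t) \mapsto (s_2, s_1, -t)$ and the conservation law $s_1(t)s_2(t) = s_1(0)s_2(0)$ enjoyed by the slow-down vector field \eqref{eq:slowdown vector field} (the latter following immediately from $\dot s_1 = A s_1$, $\dot s_2 = -A s_2$ with $A = (\log\lambda)\Psi_p(s_1^2 + s_2^2)$).

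For the lower bound, evaluating Lemma \ref{lem:PSS-6.5} at $t = T$ gives
\[
\Delta s_2(T) \geq \left|\frac{\Delta s_2(T_1)}{s_1(T_1)}\right| |s_1(T)| \bigl(1 + C_0 |s_1(T_1)|^{2\alpha}(T-T_1)\bigr)^{-\beta_2}.
\]
Since Lemma \ref{s1 and s2 approx}(d) ensures $|s_1(T)| \geq |s_1(T_1)|$, only the $(1+\cdot)^{-\beta_2}$ factor needs a lower bound. The key observation is that $s_1(T_1) = s_2(T_1)$, so applying Lemma \ref{s1 and s2 approx}(b) on the slow phase $[0, T_1]$ gives
\[
|s_1(T_1)|^{2\alpha} T_1 = |s_2(T_1)|^{2\alpha} T_1 \leq \frac{|s_2(0)|^{2\alpha} T_1}{1 + C_0 |s_2(0)|^{2\alpha} T_1} \leq C_0^{-1},
\]
and since $T - T_1 = T_1$ by the time-reversal symmetry, the denominator is at most $2$, and the lower bound holds with $C_4 := 2^{-\beta_2}$.

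For the upper bound, Lemma \ref{spread-in-slowdown lemma} at $b = t = T$ combined with the bound on $|s_1(T)|/|s_1(T_1)|$ coming from Lemma \ref{s1 and s2 approx}(c) produces
\[
\Delta s_2(T) \leq |\Delta s_2(T_1)| \bigl(1 + 2^{\alpha}C_0 |s_1(T)|^{2\alpha}(T - T_1)\bigr)^{1/2\alpha - \beta}.
\]
The conservation and symmetry identities force $|s_1(T)|^{2\alpha}(T - T_1) = |s_2(0)|^{2\alpha} T_1$, a quantity that, unlike its slow-phase analogue $|s_2(T_1)|^{2\alpha} T_1$, is not a priori uniformly bounded. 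The hardest step is therefore establishing uniformity of the upper bound: one must exploit the sign of $1/2\alpha - \beta$ together with the cone condition $|\Delta s_1| \leq \mu|\Delta s_2|$ maintained throughout $[0, T]$ under the hypotheses of Lemma \ref{spread-in-slowdown lemma}, in order to extract a uniform constant $C_3$ depending only on the slowdown parameters $\alpha, \mu, p, \lambda$ and on $r_1$.
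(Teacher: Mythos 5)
Your lower bound is correct and complete, and it is essentially the only reasonable way to get it from the quoted estimates: evaluate the second inequality of Lemma \ref{lem:PSS-6.5} at $t=T$, use $|s_1(T)|\geq|s_1(T_1)|$ (Lemma \ref{s1 and s2 approx}(d)), and kill the time factor via $|s_1(T_1)|^{2\alpha}(T-T_1)=|s_2(T_1)|^{2\alpha}T_1\leq C_0^{-1}$, which follows from Lemma \ref{s1 and s2 approx}(b) and the midpoint identity $s_1(T_1)=s_2(T_1)$. This gives $C_4=2^{-\beta_2}$ as you say. (The paper itself only cites \cite{PSS} for this lemma, so there is no in-paper proof to compare against.)

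The upper bound, however, is a genuine gap. The chain you set up ends with $\Delta s_2(T)\leq \Delta s_2(T_1)\left(1+2^{\alpha}C_0|s_1(T)|^{2\alpha}(T-T_1)\right)^{1/2\alpha-\beta}$, and, as you yourself note, $|s_1(T)|^{2\alpha}(T-T_1)=|s_2(0)|^{2\alpha}T_1$ is unbounded along passages close to the singularity (it is comparable to $(s_2(0)/s_2(T_1))^{2\alpha}$). Since $\beta=\frac{1-\mu}{2^{\alpha+2}}\leq\frac14<\frac{1}{2\alpha}$, the exponent $1/2\alpha-\beta$ is strictly positive, so this prefactor genuinely blows up; there is no sign to ``exploit,'' and the appeal to the cone condition $|\Delta s_1|\leq\mu|\Delta s_2|$ is not attached to any mechanism that would recover a uniform $C_3$. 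The problem is structural: the ratio $|s_1(T)|/|s_1(T_1)|$ costs a power $1/2\alpha$ of the time factor while the spread estimate only refunds a power $\beta<1/2\alpha$, so no choice of $b$ or rearrangement of Lemmas \ref{spread-in-slowdown lemma} and \ref{s1 and s2 approx} closes this route. A workable repair uses the very identities you introduced but applies them to the difference equation itself on $[T_1,T]$: writing $\frac{d}{dt}\Delta s_2=-\log\lambda\left[\Delta s_2\,\Psi_p(|\tilde s|^2)+s_2\left(\Psi_p(|\tilde s|^2)-\Psi_p(|s|^2)\right)\right]$, the first term only contracts, and the cross term is bounded using $|\Delta s_1|\leq\mu|\Delta s_2|$, $\Psi_p'(u)=\alpha(p/2)^{2\alpha}u^{\alpha-1}$, and the conserved quantity $s_1s_2\equiv s_1(T_1)^2$; changing variables via $d(\log s_1)=\log\lambda\,\Psi_p(|s|^2)\,dt$ shows the total exponential growth it can produce over $[T_1,T]$ is at most $\exp\left(c(\alpha,\mu)\,s_1(T_1)^2\int_{s_1(T_1)}^{s_1(T)}s_1^{-3}\,ds_1\right)\leq e^{c(\alpha,\mu)/2}$, a uniform constant, which is the upper bound you need.
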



\section{A lower bound on the tail of the return time}

Proving Theorem \ref{Main 1} requires polynomial upper and lower bounds on the tail of the return time, $\mu_1\left(\left\{x \in \Lambda : \tau(x) > n\right\}\right)$ (where $\mu_1$ is the $g$-invariant Riemannian measure from Proposition \ref{prop:SRB}). We prove these bounds in this section. 

To begin, we cite the following result, bounding the time a typical orbit stays near a singularity. 

\begin{lemma}[\cite{VecPAD1}, Lemma 5.2]\label{annular bound in M}
	There exists a $T_0 \in \Z$, depending on $r_0$ and $\lambda$, so that for any $x \in \Us_0 = \union_{k=1}^m \oldphi^{-1}_k(D_{\rho_0})$, we have 
	\[
	\max\left\{ N > 0 : g^n(x) \in \union_{k=1}^m \oldphi_k^{-1}\left(D_{\rho_0} \setminus D_{\rho_1}\right) \: \textrm{for all } n = 0, \ldots N\right\} \leq T_0.
	\]
\end{lemma}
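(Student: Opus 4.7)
The plan is to pass to slow-down coordinates near each singularity, recognize $g$ there as the time-$1$ map of the vector field $\hat V_p$ from \eqref{eq:slowdown vector field}, and use the conservation law of this flow together with a lower bound on $\Psi_p$ away from the origin.

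First, I would fix a singularity $x_k$ and a stable sector $S^s_{kj}$, and pull back to $(s_1,s_2)$-coordinates via the chart $\sigma_{kj}=\phi^{-1}\circ\Phi_{kj}\circ\oldphi_k$, in which $g$ coincides with the time-$1$ flow map $G_p$. Under $\sigma_{kj}$, the annular region $\oldphi_k^{-1}(D_{\rho_0}\setminus D_{\rho_1})$ is mapped inside an annulus $\{R_1\leq |s|\leq R_0\}$ for explicit $0<R_1<R_0$ depending only on $\rho_0,\rho_1,p,\alpha$ (these come from the formulas for $\Phi_{kj}$ and $\phi^{-1}$ near zero). Since the $g^n(x)$ correspond to integer-time iterates of the flow of $\hat V_p$, it suffices to bound the total flow time that any orbit of $\hat V_p$ spends in this annulus.

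Second, the key observation is that $\frac{d}{dt}(s_1 s_2)=0$ along $\hat V_p$, since the two terms $(\log\lambda)s_1s_2\Psi_p$ and $-(\log\lambda)s_1s_2\Psi_p$ cancel. Consequently, every forward orbit lies on a single branch of a hyperbola $s_1 s_2=c$, with $s_2$ strictly decreasing and $s_1$ strictly increasing. Its intersection with the annulus therefore consists of at most two arcs: an ``entry'' arc on which $s_2\geq|s|/\sqrt 2\geq R_1/\sqrt 2$, and an ``exit'' arc on which $s_1\geq R_1/\sqrt 2$.

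Third, on each arc I would bound the flow time directly from the ODE. Since $\Psi_p$ is non-decreasing and $s_1^2+s_2^2\geq R_1^2$ throughout the annulus, we have $\Psi_p(s_1^2+s_2^2)\geq\Psi_p(R_1^2)=:c_0>0$. On the entry arc,
\[
|\dot s_2|=(\log\lambda)\,s_2\,\Psi_p(s_1^2+s_2^2)\geq(\log\lambda)(R_1/\sqrt 2)c_0,
\]
so the elapsed time is at most $\sqrt 2\,R_0/[(\log\lambda)\,R_1 c_0]$ since $s_2$ can decrease by at most $R_0$ on this arc. A symmetric estimate with $s_1$ in place of $s_2$ bounds the exit arc time by the same constant. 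Summing and taking the maximum over the finitely many charts and sectors $(k,j)$ gives a uniform upper bound; we then take $T_0$ to be the ceiling of this quantity. The main mild technical obstacle is confirming that the nonlinear coordinate change $\phi^{-1}$ really carries the annulus into a region uniformly bounded away from the origin in the slow-down coordinates, but this follows immediately from the explicit power-law formula \eqref{eq:mass-push-near-0} combined with the finiteness of the atlas.
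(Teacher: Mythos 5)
Your proposal is correct and follows essentially the same route as the source of this lemma (the paper does not reprove it but quotes it from \cite{VecPAD1}, Lemma 5.2): in slow-down coordinates one uses that $\Psi_p$ is bounded below on the annulus, so the flow of $\hat V_p$ is a bounded time-change of the linear hyperbolic flow and crosses the annular region in uniformly bounded time, which is exactly what your conserved quantity $s_1s_2$ and the monotonicity of $s_1,s_2$ deliver. Two trivial adjustments: a run of consecutive integer times in the annulus can straddle a brief excursion into $D_{\rho_1}$ (so take $T_0$ to be the total crossing-time bound plus $1$), and one should note, via \eqref{eq:g-neighborhood-containments} and the disjointness of the singular neighborhoods, that such a run cannot jump from one singularity's annulus to another's, so your reduction to a single chart and sector is justified.
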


Now, consider the Young structure on $(M,g)$ constructed in Section 5 with stable sets $\Lambda_s^i$. Note the sets $\Lambda^s_i$ consist of full-length stable curves through a Markov rectangle $R$. Fix one such curve $\sigma$. Denote $D^k_{\rho_j} = \oldphi_k^{-1}\left(D_{\rho_j}\right) \subset M $ for $j=0,1$. 

\begin{lemma}\label{lem:length ratio is polynomial}
	Suppose a stable curve $\sigma \subset \Lambda_i^s$ enters a singular neighborhood $D^k_{\rho_1}$ at time $n > 1$, so that $g^n(\sigma) \cap D^k_{\rho_1} \neq \emptyset$, and that $\sigma$ exits $D^k_{\rho_1}$ at time $m > n$. Then, 
	\begin{equation}\label{eq:length ratio is polynomial}
	C_5(m-n)^{-\gamma} \leq \frac{L\left(g^m(\sigma)\right)}{L\left(g^n(\sigma)\right)} \leq C_6(m-n)^{-\gamma'},
	\end{equation}
	where $C_5>0$, $C_6 > 0$ are constants independent of $m$, $n$, and the choice of stable curve $\sigma$; $\gamma, \gamma'$ are as in (\ref{gamma-defs}); and $L$ denotes the length of the curve. 
\end{lemma}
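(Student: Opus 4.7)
The plan is to transfer the problem to the slowdown coordinates $(s_1,s_2)$ and apply the ODE estimates from Section 6 to control the contraction of $|\Delta s_2|$. By \eqref{eq:g-def}, on each stable sector $S^s_{kj}\cap\oldphi_k^{-1}(D_{\rho_0})$ the map $g$ is conjugate to $G_p$ (the time-$1$ map of $\hat V_p$) via the coordinate change $\phi^{-1}\circ\Phi_{kj}\circ\oldphi_k$; and since the stable sectors are $g$-invariant, $g^n(\sigma)$ remains in a single sector throughout its sojourn in $D^k_{\rho_1}$. For $0\leq t\leq T:=m-n$, write $\sigma_t$ for the image in $(s_1,s_2)$-coordinates of $g^{n+t}(\sigma)$, so that $\sigma_0$ corresponds to $g^n(\sigma)$ and $\sigma_T$ corresponds to $g^m(\sigma)$.

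The first reduction is a length-versus-spread comparability. By a standard invariant-cone argument applied to the linearization $D\hat V_p$ (whose off-diagonal entries are proportional to $s_1 s_2 \Psi_p'(s_1^2+s_2^2)$ and hence dominated by the diagonal entries inside $D_{r_1}$), stable curves for $G_p$ inside $D_{r_1}$ are uniformly Lipschitz graphs over the $s_2$-axis with slope bounded by some $\mu<1$. Consequently, the Euclidean length of $\sigma_t$ is comparable, within a factor $\sqrt{1+\mu^2}$, to the $s_2$-spread $|\Delta s_2(t)| := |\tilde s_2(t) - s_2(t)|$ between the two endpoint trajectories $s(t),\tilde s(t)$ of $\hat V_p$, for every $0\leq t\leq T$. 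This same cone bound is hypothesis (1) of Lemma \ref{spread-in-slowdown lemma}, and hypothesis (2) is satisfied because $|\Delta s_2(0)|\asymp L(g^n(\sigma))$ is small compared to $|s_2(0)|\sim r_1$ (if necessary one shrinks $\rho_1$ in the construction of $g$, or subdivides $\sigma$ into short subarcs and argues piecewise).

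With the hypotheses of Lemma \ref{spread-in-slowdown lemma} in place, set $T_1 = T/2$ and combine Lemma \ref{lem:PSSlem6.4} (upper bound on $|\Delta s_2(T_1)|$), Lemma \ref{lem:PSS Lemma 6.6} (lower bound on $|\Delta s_2(T_1)|$), and Lemma \ref{lem:PSS Lemma 6.7} (which sandwiches $|\Delta s_2(T)|$ between constant multiples of $|\Delta s_2(T_1)|$) to obtain
\[
K_1 \, |\Delta s_2(0)| \, (m-n)^{-\gamma} \;\leq\; |\Delta s_2(T)| \;\leq\; K_2 \, |\Delta s_2(0)| \, (m-n)^{-\gamma'}
\]
for constants $K_1,K_2>0$ independent of $\sigma,n,m$. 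To convert back to Riemannian length on $M$, observe that both $g^n(\sigma)$ and $g^m(\sigma)$ lie at distance $\sim\rho_1$ from the singularity $x_k$, a fixed positive quantity; the Jacobian of $\phi^{-1}\circ\Phi_{kj}\circ\oldphi_k$ (computable directly from \eqref{eq:rho-r-coord-change}) is therefore bounded above and below by uniform constants at both $t=0$ and $t=T$. Combining this with the length-spread comparability yields \eqref{eq:length ratio is polynomial} with appropriate $C_5,C_6$.

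The main technical hurdle is establishing the invariant-cone / Lipschitz-graph statement uniformly in $D_{r_1}\setminus\{0\}$: the cone is trivially preserved near the coordinate axes, but at points where $|s_1|\asymp|s_2|$ the off-diagonal entries of $D\hat V_p$ are nontrivial, and one must track their integrated effect under many iterates of $G_p$ to rule out slope blow-up as trajectories approach the origin.
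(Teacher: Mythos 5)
Your proposal follows essentially the same route as the paper: pass to the slowdown coordinates $(s_1,s_2)$, check the hypotheses of Lemma \ref{spread-in-slowdown lemma}, and sandwich $|\Delta s_2(m-n)|$ using Lemmas \ref{lem:PSSlem6.4}, \ref{lem:PSS Lemma 6.6}, and \ref{lem:PSS Lemma 6.7} with $T=m-n$, $T_1=(m-n)/2$, before converting spread back to length at the entry and exit times. The ``technical hurdle'' you flag (the invariant stable cone condition $|\Delta s_1(t)|\le\mu|\Delta s_2(t)|$ along the sojourn) is exactly the point the paper dispatches by observing that $\sigma$ is a stable curve of the Young structure, so its endpoints stay in the stable cone, with the cone estimates inherited from the cited framework of \cite{PSZ17,PSS}; otherwise your argument, including the subdivision/shrinking of $\rho_1$ to get hypothesis (2) and the endpoint-only Jacobian comparison, is a correct variant of the paper's proof.
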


\begin{proof}
	Let $x, y$ be the endpoints of the curve $\gamma$ in $R$. Set $x_k = g^k(x)$ and $y_k = g^k(y)$. Observe that there is a $K_0 > 0$ such that for all $k \geq 1$, 
	\begin{equation}\label{eq:length-dist-equivalence}
	K_0^{-1} d(x_k, y_k) \leq L(g^k(\sigma)) \leq K_0 d(x_k, y_k).
	\end{equation}
	where $d$ is the Riemannian distance in $M$. 
	
	Let $\sigma$ be as in the statement of the lemma. By assumptions on the pseudo-Anosov homeomorphism $f$, $\sigma$ remains in a stable sector for the duration of time it remains in $D^k_{\rho_1}$ prior to exiting. In $(s_1, s_2)$-coordinates in the stable sector, it is enough to consider the map $g : M \to M$ near $x_k$ to be the time-1 map $G_p : \R^2 \to \R^2$ of the vector field \eqref{eq:slowdown vector field}. (Recall $s_1 + is_2 = \left( \varphi^{-1} \circ \Phi_{kj} \circ \phi_k\right)(x)$ for $x \in \Us_0$; see \eqref{eq:mass-push-coord-change}, \eqref{eq:sector-coord-map}, and \eqref{eq:g-def}.)
	
	Let $s, \tilde s : [0,m-n] \to \R^2$ be solutions to \eqref{eq:slowdown vector field} with initial conditions $s(0) = \left(\phi^{-1} \circ \Phi_{kj} \circ \oldphi_k\right)(x_n)$ and $\tilde s(0) = \left(\phi^{-1} \circ \Phi_{kj} \circ \oldphi_k\right)(y_n)$, and note $s(0), \tilde s(0) \in D_{r_1}$ while $s(m-n),\tilde s(m-n)$ lies in $D_{r_0} \setminus D_{r_1}$. Also define $\Delta s_i(t) = \tilde s_i(t)  - s_i(t)$ and let $\Delta s(t) = (\Delta s_1, \Delta s_2) \in \R^2$ be the difference vector from $\tilde s(t)$ to $s(t)$. Note that there is a $K_1$ independent of $\sigma$ for which 
	\begin{equation}\label{eq:flow-length-estimate}
	K_1^{-1} \|\Delta s(j-n)\| \leq d(x_j, y_j) \leq K_1\|\Delta s(j-n)\|
	\end{equation}
	for all $n \leq j \leq m$.
	
	We will apply Lemma \ref{spread-in-slowdown lemma}. To check that the conditions are satisfied, first observe that Assumption 1 is satisfied since $y$ is in the image of the stable cone of $x$ under $\exp_x : T_x M \to M$. Assumption 2 is satisfied if $d(x_k, y_k)$, for $k=n,m$, is sufficiently small. This can be done, using Proposition \ref{forall-Q} and \eqref{eq:flow-length-estimate}, by taking $r_0 > 0$ in the construction of $g$ so that $Q>0$ is sufficiently large. So Lemma \ref{spread-in-slowdown lemma} applies.
	
	Assume $\|\Delta s(0)\|$ is made sufficiently small so that the curve $G_p^j\left( \phi^{-1} \circ \Phi_{kj} \circ \oldphi_k(\sigma)\right) \in \R^2$ lies in $D_{r_0/2} \cap \{(s_1, s_2) : s_1 > s_2\}$ for $n < j < \frac{n+m}{2}$ and lies in $D_{r_0/2} \cap \{(s_1, s_2) : s_1 < s_2\}$ for $\frac{n+m}{2} < j < m$. Applying Lemmas \ref{spread-in-slowdown lemma}, \ref{lem:PSS Lemma 6.6}, and \ref{lem:PSS Lemma 6.7} (with $T = m-n$ and $T_1 = (m-n)/2$), as well as \eqref{eq:length-dist-equivalence} and \eqref{eq:flow-length-estimate}, we obtain: 
	\begin{align*}
		L(g^m(\sigma)) &\geq K_0^{-1} d(x_m, y_m) \\
		&\geq K_0^{-1} K_1^{-1} \|\Delta s(m-n)\| \\
		&\geq K_0^{-1} K_1^{-1} |\Delta s_2(m-n)| \\
		&\geq K_0^{-1} K_1^{-1} C_4\left| \Delta s_2\left( \frac{m-n}2\right)\right| \\
		&\geq K_0^{-1} K_1^{-1} C_2 C_4|\Delta s_2(0)| \left( \frac{m-n}2\right)^{-\gamma} \\
		&\geq K_0^{-1} K_1^{-1} C_2 C_4 2^\gamma (m-n)^{-\gamma} \frac{1}{\sqrt{1+\mu^2}} \|\Delta s(0)\| \\
		&\geq K_0^{-2} K_1^{-2} C_2C_4 2^{\gamma} (m-n)^{-\gamma} \frac{1}{\sqrt{1+\mu^2}} L(g^n(\sigma)).
	\end{align*}
	The lower bound of \eqref{eq:length ratio is polynomial} now follows with $C_5 = K_0^{-2} K_1^{-2} C_2 C_4 2^\gamma/\sqrt{1+\mu^2}$.
	
	To prove the upper bound, we use Lemmas \ref{spread-in-slowdown lemma}, \ref{lem:PSSlem6.4}, and \ref{lem:PSS Lemma 6.7}, as well as \eqref{eq:length-dist-equivalence} and \eqref{eq:flow-length-estimate}, to show:
	\begin{align*}
		L(g^m(\sigma)) &\leq K_0 d(x_m, y_m) \\
		&\leq K_0 K_1 \|\Delta s(m-n)\| \\
		&\leq K_0 K_1 \sqrt{1+\mu} |\Delta s_2(m-n)| \\
		&\leq K_0 K_1 C_3 \sqrt{1+\mu}\left| \Delta s_2\left(\frac{m-n}2\right)\right| \\
		&\leq K_0 K_1 C_1 C_3 2^{\gamma'}\sqrt{1+\mu}|\Delta s_2(0)|(m-n)^{-\gamma'} \\
		&\leq K_0 K_1 C_1 C_3 2^{\gamma'} \sqrt{1+\mu} (m-n)^{-\gamma'} \|\Delta s(0)\| \\
		&\leq K_0^2 K_1^2 C_1 C_3 2^{\gamma'} \sqrt{1+\mu} (m-n)^{-\gamma'} L(g^n(\sigma)).
	\end{align*}
	The upper bound of \eqref{eq:length ratio is polynomial} now follows with $C_6 = K_0^2 K_1^2 C_1 C_3 2^{\gamma'} \sqrt{1+\mu}$.
\end{proof} 

\color{black}

Let $\widetilde\Ps$ and $\Ps$ be Markov partitions for the pseudo-Anosov homemorphism $f$ and the smooth realization $g$ respectively, and let $\tilde R \in \tilde\Ps$ and $R \in \Ps$ be the partition elements discussed in Section \ref{subsec:Realizing $g$ as a Young diffeomorphism}. Fix the number $Q$ as in Proposition \ref{exists-Q}. Assume the partition $\Ps$ and the numbers $0 < r_1 < r_0$ are chosen so that Proposition \ref{forall-Q} holds. Finally, denote:
\[
\Ns = \tau(\Lambda) = \{n \in \N : \textrm{there exists } x \in R \textrm{ such that } n = \tau(x)\}.
\]
\begin{lemma}\label{time bound near singularities}
	We may choose $\rho_0>0$ in the construction of $g$ so that there is an integer $Q_0 > 0$ satisfying the following property: For each singularity $x_k$, for any $N > 0$, one can find $n \in \Ns$ with $n > N$, an $s$-subset $\Lambda^s_l$ with $\tau(\Lambda^s_l) = n$ and numbers $0 < m_1 < m_2$ satisfying $m_1 < Q_0$, $n-m_2 < Q_0$ such that $g^l(\Lambda^s_l) \cap \Us_0 = \emptyset$ for all $0 \leq l < m_1$ and $m_2 < l \leq n$, and $g^l(\Lambda_l^s) \cap \Us_{0} \neq \emptyset$ for all $m_1 \leq l \leq m_2$. 
\end{lemma}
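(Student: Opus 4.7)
The plan is to realize the desired $s$-subset $\Lambda^s_l$ as a cylinder set in the Markov partition $\Ps$ corresponding to a carefully prescribed symbolic trajectory, and then read off the interaction with $\Us_0$ from the structure of the trajectory together with the annular-bound estimate (Lemma \ref{annular bound in M}). Since the conjugacy $H$ preserves Markov rectangles, return times, and $s$-subsets, the combinatorial part of the argument can equivalently be stated in either the $f$-picture or the $g$-picture; we work in the $g$-picture throughout.

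First, we refine the Markov partition $\Ps$ and shrink $\rho_0$ so that Proposition \ref{forall-Q} holds and, in addition, every non-singular rectangle of $\Ps$ lies at positive distance from each singularity, hence is disjoint from $\Us_0$. This is possible since there are only finitely many non-singular rectangles, each compact and disjoint from $S$. Fix the singularity $x_k$. Using strong connectedness of the transition graph of $\Ps$ (a consequence of topological mixing of $g$, inherited from $f$ via Proposition \ref{pseudo-Anosov Markov}), there are uniform bounds $Q_{\mathrm{in}}$ and $Q_{\mathrm{out}}$ on the minimum lengths of legal paths from $R$ through non-singular rectangles into $\bigcup_l R_{k,l}$ and from $\bigcup_l R_{k,l}$ through non-singular rectangles back to $R$. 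Moreover, the local stable manifold of $x_k$ restricted to a singular rectangle provides an infinite forward $g$-orbit staying in $\bigcup_l R_{k,l}$ for all time, so the subshift restricted to the singular alphabet $\{R_{k,1}, \ldots, R_{k, 2p(k)}\}$ admits legal words of every finite length. Concatenating these produces, for any $N$, a legal first-return word from $R$ to $R$ of length $n > N$ whose first $Q_{\mathrm{in}}$ letters are non-singular rectangles, middle letters are all singular rectangles at $x_k$, and last $Q_{\mathrm{out}}$ letters are non-singular; the corresponding cylinder set is the desired $s$-subset $\Lambda^s_l$ with $\tau(\Lambda^s_l) = n$.

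Finally, we verify the interaction with $\Us_0$. For $j < Q_{\mathrm{in}}$ or $j > n - Q_{\mathrm{out}}$, the iterate $g^j(\Lambda^s_l)$ is contained in a non-singular rectangle and hence is disjoint from $\Us_0$ by construction. Let $m_1$ and $m_2$ be the first and last $j$ for which $g^j(\Lambda^s_l) \cap \Us_0 \neq \emptyset$. By Lemma \ref{annular bound in M}, any orbit segment inside $\Us_0$ can remain in the annulus $\bigcup_k \oldphi_k^{-1}(D_{\rho_0} \setminus D_{\rho_1})$ for at most $T_0$ consecutive iterates before either plunging into $\bigcup_k \oldphi_k^{-1}(D_{\rho_1}) \subset \Us_0$ or exiting $\Us_0$. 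Since the middle orbit remains in the bounded singular cluster for arbitrarily many iterates while the dynamics outside $\bigcup_k \oldphi_k^{-1}(D_{\rho_1})$ are uniformly expansive, the orbit must enter the slowdown region $\bigcup_k \oldphi_k^{-1}(D_{\rho_1})$ within at most $T_0$ steps of entering the singular cluster, and by the symmetric argument must leave $\bigcup_k \oldphi_k^{-1}(D_{\rho_1})$ at most $T_0$ steps before leaving the singular cluster. This gives $m_1 \leq Q_{\mathrm{in}} + T_0$ and $n - m_2 \leq Q_{\mathrm{out}} + T_0$; between $m_1$ and $m_2$ the slowdown keeps the orbit inside $\Us_0$ throughout. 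Setting $Q_0 = \max(Q_{\mathrm{in}}, Q_{\mathrm{out}}) + T_0 + 1$ completes the proof.

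The main technical obstacle is ruling out that the orbit meets $\Us_0$ on several disjoint intervals of time, which would destroy the clean $[m_1, m_2]$ structure. This is precisely what the annular bound of Lemma \ref{annular bound in M} prevents: the orbit cannot hover near $\partial \Us_0$ but is forced either to escape entirely or to plunge into the deep slowdown region, producing a single cleanly delimited excursion into $\Us_0$.
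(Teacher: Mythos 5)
Your overall strategy (steer the $s$-subset symbolically into the cluster of singular rectangles, let it linger, and come straight back) is close in spirit to the paper's, but two steps that carry the real content of the lemma are not actually established. The central gap is your use of Lemma \ref{annular bound in M}: that lemma bounds the number of consecutive iterates an orbit can spend in the annulus $\bigcup_k \oldphi_k^{-1}(D_{\rho_0}\setminus D_{\rho_1})$, which lies entirely \emph{inside} $\Us_0$; it gives no control whatsoever on how long an orbit can travel through the singular rectangles $\bigcup_l R_{k,l}$ while still \emph{outside} $\Us_0$ before first meeting $\Us_0$ (or after leaving it). Your bounds $m_1 \leq Q_{\mathrm{in}} + T_0$ and $n - m_2 \leq Q_{\mathrm{out}} + T_0$ — i.e.\ the very conclusion $m_1 < Q_0$, $n - m_2 < Q_0$ with $Q_0$ independent of $n$ — rest precisely on such a transit-time bound, which you never prove. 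Likewise, the claim that the times $l$ with $g^l(\Lambda^s_l)\cap\Us_0 \neq \emptyset$ form a single interval is asserted, not proved: distinct points of your cylinder could a priori visit $\Us_0$ during non-overlapping time windows, and nothing in your construction forces a common entry time. The paper's proof is built exactly to control these two points: it takes full-length curves $\gamma^s, \gamma^u \subset \tilde R$ lying on the stable and unstable manifolds that extend the prongs of $x_k$, with minimal times $n_1^s, n_1^u$ to be mapped into the prongs, shrinks $\rho_0$ (finitely many times) so that the approach before time $n_1^s$ misses $\Us_0$, and then chooses the $s$-subset so that it enters $D^k_{\rho_0}$ \emph{together with} $\gamma^s$ and exits shadowing the unstable prong, returning to $R$ after $n_1^u$ further steps; this yields $Q_k = \max\{n_1^s, n_1^u\}$ and the single-window structure in one stroke. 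To salvage your route you would need an independent bound on the transit time through $\bigl(\bigcup_l R_{k,l}\bigr)\setminus \Us_0$ (e.g.\ via expansivity/compactness, since the maximal invariant set of $f$ in that closed region is empty: stable-prong points enter $\Us_0$ forward in time, unstable-prong points backward), and then use it to show all points of the cylinder lie in $\Us_0$ over a common middle stretch, so their residence windows overlap.

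The symbolic step is also looser than it should be. Concatenation of legal words requires matching at the junctions: you need arbitrarily long admissible words in the singular alphabet that \emph{start} at the rectangle where your incoming path lands and \emph{end} where your outgoing path departs (this is fixable — e.g.\ $R_{k,l}\to R_{k,l}$ is admissible because $x_k$ is a fixed corner of $R_{k,l}$, which is how the paper gets its middle block $\oldbar R_k$ consisting of a single repeated symbol, using $g(S^{s/u}_{kj})\subset S^{s/u}_{kj}$), but your appeal to the local stable manifold does not by itself supply words anchored at the required rectangles. Moreover, topological mixing alone does not guarantee connecting paths from $R$ to the $x_k$-cluster that use only non-singular rectangles: a path may be forced through rectangles adjacent to other singularities, and containment in such a rectangle does not keep $g^l(\Lambda^s_l)$ out of $\Us_0$, so the clean prefix/suffix disjointness you rely on is not justified as stated. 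The paper avoids this by working along the invariant extensions of the prongs and shrinking $\rho_0$ against those finitely many pre-entry iterates, rather than by restricting the symbolic alphabet of the connecting words.
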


\begin{proof}
	We will show that for each $k = 1, \ldots, \ell$ (where $\ell$ is the number of singularities of $f$), there is an integer $Q_k > 0$ satisfying this proposition with $\Us_{0}$ replaced with the neighborhood $D^k_{\rho_0}$ around the singularity $x_k$. Taking $Q_0 = \max\{Q_1, \ldots, Q_m\}$ will yield the result. 
	
	Fix $k \in \{1, \ldots, m\}$. To prove the existence of $Q_k$, it suffices to show there exists an integer $Q_k > 0$ such that for any $N > 0$, there is an admissible word of length $n > N$ of the form 
	\begin{equation}\label{admissible word}
	R\oldbar W_1 \oldbar R_k \oldbar W_2 R,
	\end{equation}
	where the words $\oldbar W_1$ and $\oldbar W_2$ are of length $\left|\oldbar W_q\right| < Q_k$ for $q=1,2$, and do not contain any of the symbols $R$ or $R_{j,l}$ (the latter being elements of the Markov partition with a singularity $x_j$ as a vertex; see Proposition \ref{forall-Q}), and the word $\oldbar R_k$ consists of one of the symbols $R_{k,1}, \ldots, R_{k, 2p(k)}$ repeated $\left|\oldbar R_k\right| = n-2-\left|\oldbar W_1\right| - \left|\oldbar W_2\right|$ times (since the stable and unstable sectors satisfy $g(S^{s/u}_{kj}) \subset S^{s/u}_{kj}$; see section \ref{sec:construction}). Observe that since this word of length $n$ begins and ends with the symbol $R$, we have that $n \in \Ns$. 
	
	Because the smooth realization $g$ is topologically conjugate to the linear pseudo-Anosov homeomorphism $f$, it suffices to prove that there is an admissible word of the form (\ref{admissible word}) for $f$ and the Markov partition $\tilde\Ps$. To this end, consider the stable and unstable prongs through $x_k$. Since the singularities are fixed points by assumption, the prongs are invariant under $f$. As before, let $P^s_{k,j} \subset D^k_{\rho_0}$ and $P^u_{k,j} \subset D^k_{\rho_0}$ ($1 \leq j \leq p(k)$) be the components of the stable and unstable prongs having $x_k$ as an endpoint and contained in $D^k_{\rho_0}$. By topological transitivity of $f$ (\cite{FS79}, Corollary 9.19), we know $f^q(\tilde R) \cap D^k_{\rho_0} \neq \emptyset$ for some integer $q \geq 1$ (recall $\tilde R$ is the Markov element of $f$, corresponding to the Markov element $R$ of $g$ under topological conjugacy). For each $j = 1, \ldots, p(k)$, there are minimal positive integers $n_j^s$ and $n_j^u$ for which $f^{-n_j^s}(P^s_{k,j}) \cap \tilde R \neq \emptyset$ and $f^{n^u_j}(P^u_{k,j}) \cap \tilde R \neq \emptyset$. For definiteness, without loss of generality, assume $n^s_1 = \max\left\{n^s_j : 1 \leq j \leq p(k) \right\}$, and let $\gamma^s$ and $\gamma^u$ accordingly be full-length stable and unstable curves in $\tilde R$ for which $P^s_{k,1} \supset f^{n^s_1}(\gamma^s)$ and $P^u_{k,1} \supset f^{-n^u_1}(\gamma^u)$. In particular, $\gamma^s$ and $\gamma^u$ are constructed so that they lie on stable and unstable manifolds that extend from stable and unstable prongs of $x_k$. By reducing $\rho_0$ if necessary (which we would need to do only finitely many times), we may assume $f^i(\gamma^s)$ and $f^{-i}(\gamma^u)$ enter $\Us_0$ for the first and only time when $f^{n^s_1}(\gamma^s) \subset P^s_{k,1}$ and $f^{-n^u_1}(\gamma^u) \subset P^u_{k,1}$. It follows that $f^l(\gamma^s) \cap D^k_{\rho_0} = \emptyset$ for $0 \leq l < n_1^s$ and $f^{-l}(\gamma^u) \cap D^k_{\rho_0} = \emptyset$ for $0 \leq l < n_1^u$. 
	
	Since the manifolds extending the prongs are invariant under $f$, observe that $f^i(\gamma^s)$ and $f^{-i}(\gamma^u)$ never return to $\tilde R$ as $i \to \infty$. Thus, for any $n \in \Ns$ with $n > N$, there is a $u$-subset $\tilde\Lambda^u_{j_1}$ which completely enters $D^k_{\rho_0}$ at the same time as $\gamma^u$ (iterated under $f^{-1})$, and an $s$-subset $\tilde\Lambda^s_{j_2}$ which completely enters $D^k_{\rho_0}$ at the same time as $\gamma^s$ (iterated under $f$). Recall that $\gamma^s \subset \tilde R$ is an extension of the stable prong at the singularity $x_k$. Taking a point $x \in \tilde\Lambda^s_{j_2}$ sufficiently close to $\gamma^s$, we note that eventually $f^i(x) \in f^{-n_1^u}(\Lambda^u_{j_1})$, and so $f^{i+n_1^u}(x) \in \tilde R$. So the symbolic representation of $x$ satisfies \eqref{admissible word}, with $Q_k = \max\{n_1^s, n_1^u\}$. This completes the proof of the lemma. 
\end{proof}

%

\begin{lemma}\label{tail lower bound}
	There exists a constant $C_7 > 0$ such that 
	\[
	\mu_1\left(\left\{x \in \Lambda : \tau(x) > n \right\}\right) > C_7 n^{-(\gamma-1)},
	\]
	where $\mu_1$ is the measure of Proposition \ref{prop:SRB} and $\gamma$ is defined in (\ref{gamma-defs}). 
\end{lemma}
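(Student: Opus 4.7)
The plan is to exhibit, for each sufficiently large integer $n$, a single $s$-subset $\Lambda^s_{l(n)}$ of return time exactly $n$ whose $\mu_1$-measure is bounded below by $C\,n^{-\gamma}$; summing $n^{-\gamma}$ over $n > N$ then produces a tail of order $N^{-(\gamma-1)}$, as desired. The key inputs are Lemma \ref{time bound near singularities} (which supplies the $s$-subset) and a time-reversed application of Lemma \ref{lem:length ratio is polynomial} (which supplies the length estimate).

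First I would inspect the proof of Lemma \ref{time bound near singularities}: the admissible word $R\,\bar W_1\,\bar R_k\,\bar W_2\,R$ constructed there has $\bar R_k$ consisting of one symbol $R_{k,l}$ repeated an arbitrary number of times, since stable sectors are mapped into themselves by $f$. Hence not only does an $n > N$ exist, but in fact every sufficiently large integer $n$ can be realized as the return time of some $\Lambda^s_{l(n)}$ with $m_1 < Q_0$, $n - m_2 < Q_0$ and $g^{j}(\Lambda^s_{l(n)}) \subset \Us_0$ precisely for $m_1 \leq j \leq m_2$. Distinct $n$ produce disjoint $s$-subsets, so their measures may be summed.

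Next I would fix a full-length unstable leaf $\gamma^u$ through $R$ and set $\eta_n = \gamma^u \cap \Lambda^s_{l(n)}$. Since $g^n(\Lambda^s_{l(n)}) = \Lambda^u_{l(n)}$ is a $u$-subset, $g^n(\eta_n)$ is a full-length unstable arc, so its length is bounded below by some $L_0 > 0$ independent of $n$. On the complement of $\Us_0$ the map $g$ has uniformly bounded differential, so the finitely many iterates in $[0,m_1)$ and $(m_2,n]$ only change lengths by a factor bounded in terms of $Q_0$. The crucial step is the inequality
\[
\frac{L(g^{m_1}(\eta_n))}{L(g^{m_2}(\eta_n))} \;\geq\; C_5\,(m_2 - m_1)^{-\gamma}.
\]
To prove it, I would invoke the symmetry $(s_1,s_2,t) \mapsto (s_2,s_1,-t)$ of the vector field \eqref{eq:slowdown vector field}, which interchanges the roles of stable and unstable directions. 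Under this symmetry the arguments of Section 6, and specifically Lemma \ref{lem:length ratio is polynomial}, apply verbatim to $g^{m_1}(\eta_n)$ run \emph{backward} from time $m_2$; the stable-curve ratio in that lemma then becomes the reciprocal expansion ratio for our unstable arc. Combining with the $Q_0$-bounded factors outside $\Us_0$ and with $m_2 - m_1 \leq n$, I obtain $L(\eta_n) \geq C\,n^{-\gamma}$.

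Finally, because $\mu_1$ is equivalent to Riemannian area with bounded density (Proposition \ref{prop:SRB}) and $\Lambda^s_{l(n)}$ has hyperbolic product structure built from $\eta_n$ in the unstable direction and full-length stable curves of uniformly bounded length, Fubini gives $\mu_1(\Lambda^s_{l(n)}) \geq C'\,L(\eta_n) \geq C''\,n^{-\gamma}$. Summing over $n > N$,
\[
\mu_1(\{\tau > N\}) \;\geq\; \sum_{n > N} \mu_1(\Lambda^s_{l(n)}) \;\geq\; C''\sum_{n > N} n^{-\gamma} \;\geq\; C_7\,N^{-(\gamma-1)}.
\]
The main obstacle is Step 3: transferring Lemma \ref{lem:length ratio is polynomial} from stable to unstable curves requires justifying the $s_1 \leftrightarrow s_2$, $t \mapsto -t$ symmetry carefully (in particular that the change-of-coordinates $\phi^{-1} \circ \Phi_{kj} \circ \varphi_k$ intertwines the two directions as expected), and also keeping track of the bounded but explicit constants arising from the $O(1)$ iterates outside $\Us_0$ so that the resulting exponent $\gamma$ in the length bound matches the $\gamma$ in \eqref{gamma-defs}.
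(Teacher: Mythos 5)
Your proposal is correct and follows the same skeleton as the paper's proof: use Lemma \ref{time bound near singularities} to produce, for each large return time $n$, an $s$-subset whose orbit spends all but a bounded number of iterates in $\Us_0$, bound its $\mu_1$-measure below by $c\,n^{-\gamma}$ via the polynomial length estimate of Lemma \ref{lem:length ratio is polynomial}, and sum over $n$. The one genuine difference is where the length estimate is applied. You measure the $s$-set at time $0$ by its unstable width, pulling a full-length unstable arc \emph{backward} through the singular neighborhood, which forces you to justify the $(s_1,s_2,t)\mapsto(s_2,s_1,-t)$ symmetry of \eqref{eq:slowdown vector field} so that Lemma \ref{lem:length ratio is polynomial} (stated for stable curves under forward iteration) transfers to unstable curves under $g^{-1}$ -- the obstacle you yourself flag. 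The paper sidesteps this entirely: it applies Lemma \ref{lem:length ratio is polynomial} directly to the forward images of the full-length stable curve $\gamma^s_l(x)$ and converts the resulting length bound into a measure bound through the $g$-invariance of $\mu_1$, namely $\mu_1(\Lambda^s_l)=\mu_1\left(g^n(\Lambda^s_l)\right)= K_1 L\left(g^n(\gamma^s_l(x))\right)$ (the image $u$-subset is thin precisely in the stable direction), see \eqref{TLB 1}. So your route is valid but costs an extra symmetry verification that the paper's invariance trick makes unnecessary. Two smaller points: (i) your crucial inequality is stated for the passage between the times of intersecting $\Us_0$ (radius $\rho_0$), while Lemma \ref{lem:length ratio is polynomial} governs the passage through the smaller neighborhood of radius $\rho_1$; the discrepancy is handled, as in the paper, by the uniform bound on the time spent in the annulus (Lemma \ref{annular bound in M}), which should be cited explicitly; (ii) your observation that \emph{every} sufficiently large $n$ is realizable as a return time of such an $s$-set is indeed needed for the summation, and is implicit in the paper's use of the admissible-word construction, so making it explicit is a point in your favor rather than a deviation.
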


\begin{proof}
	We begin by observing
	\begin{align*}
	\mu_1\left(\left\{x \in \Lambda : \tau(x) > n\right\}\right) &= \sum_{N=n+1}^\infty \mu_1(\{x \in \Lambda : \tau(x) = N\}) \\
	&= \sum_{N = n+1}^\infty \sum_{\Lambda^s_k : \tau(\Lambda^s_k) = N} \mu_1(\Lambda^s_k) \\
	&> \sum_{N=n+1}^\infty \mu_1(\Lambda_l^s(N)),
	\end{align*}
	where $\Lambda_l^s(N) =: \Lambda_l^s$ is the $s$-set defined in Lemma \ref{time bound near singularities}. We will show that there is a $K > 0$ for which 
	\begin{equation}\label{TLB 0}
	\mu_1(\Lambda_l^s(N)) \geq KN^{-\gamma}
	\end{equation}
	for each $N \geq n+1$, where $\gamma > 0$ is given in \eqref{gamma-defs}. Once this is shown, we have
	\[
	\mu_1(\{x \in \Lambda : \tau(x) > n\}) > \sum_{N = n+1}^\infty KN^{-\gamma} > C_7 n^{-(\gamma-1)}
	\] 
	for some constant $C_7>0$. 
	
	Given $x \in \Lambda_l^s$, let $\gamma^s_l(x) = \gamma^s(x) \cap \Lambda^s_l$ (where $\gamma^s(x)$ is the full-length stable leaf through $x$ in the Markov rectangle $R$). Since the $g$-invariant measure $\mu_1$ is determined locally by the product structure of the stable and unstable manifolds (by Lemma \ref{lem:lebesgue-near-sings} and the definition of $\Omega_p$), there is a constant $K_1> 0$ independent of $x \in \Lambda_l^s$ such that 
	\begin{equation}\label{TLB 1}
	\mu_1(\Lambda_l^s) = \mu_1(g^N(\Lambda_l^s)) = K_1 L(g^N(\gamma_l^s(x)))
	\end{equation}
	where $L$ denotes the length of the curve. 
	
	Let $x_j = g^j(x)$ for $j = 0, \ldots, N$. By Lemma \ref{time bound near singularities}, there are $k_1, k_2 \geq 1$ such that $g^j(x) \not\in \Us_{0}$ if $0 \leq j < k_1$ or if $k_2 < j \leq N$, and $g^j(x) \in \Us_{0}$ if $k_1 \leq j \leq k_2$. Note that for $0 \leq j < k_1$ and $k_2 < j \leq N$, the curve $g^j(\gamma_l^s(x))$ lies in the stable cone for the pseudo-Anosov homeomorphism $f$ at $x_j$, and indeed, is an admissible manifold for $f$ (i.e., for $y \in g^j(\gamma_l^s(x))$, the tangent line $T_y g^j(\gamma^s_l(x))$ lies in the stable cone at $y$). Thus, the length of the curve $\gamma^s(x)$ contracts exponentially outside of the region $\Us_{0}$ with contracting constant $\lambda^{-1}$ (where we recall $\lambda > 1$ is the expansion constant for the pseudo-Anosov homeomorphism $f$). By the proof of Lemma \ref{time bound near singularities}, we have that $\gamma^s_l(x)$ enters and exits $\Us_{0}$ at the same time as $\Lambda_l^s$, so $k_1 < Q_0$ and $N-k_2 < Q_0$. Therefore, 
	\begin{equation}\label{TLB 2}
	L(\gamma^s_l(x)) = \lambda^{k_1} L(g^{k_1}(\gamma_l^s(x))) \leq \lambda^{Q_0}L(g^{k_1}(\gamma_l^s(x)))
	\end{equation} 
	and
	\begin{equation}\label{TLB 3}
	L(g^N(\gamma_l^s(x))) = \lambda^{-(N-k_2)}L(g^{k_2}(\gamma_l^s(x))) \geq \lambda^{-Q_0}L(g^{k_2}(\gamma_l^s(x))).
	\end{equation}
	
	Let $\Us_1 = \union_{k=1}^m D^k_{\rho_1}$, where we recall $m$ is the number of singularities and $D^k_{\rho_1} = \phi_k^{-1}(D_{\rho_1})$ is the neighborhood of the singularity $x_k$ given as the preimage of $D_{\rho_1} \subset \C$. By Lemma \ref{annular bound in M}, the time the trajectory spends in $\Us_{0} \setminus \Us_{1}$ is uniformly bounded. So by Lemma \ref{lem:length ratio is polynomial}, there is a constant $\hat C_6 > 0$ such that 
	\begin{equation}\label{TLB 4}
	L(g^{k_2}(\gamma_l^s(x))) > \hat C_6(k_2 - k_1)^{-\gamma} L(g^{k_1}(\gamma_l^s(x))). 
	\end{equation}
	Since $k_2 - k_1 < N$, by (\ref{TLB 1}) - (\ref{TLB 4}), 
	\begin{align*}
	\mu_1(\Lambda_l^s) &\geq K_1 L(g^N(\gamma_l^s(x))) \geq K_1 \lambda^{-Q_0} L(g^{k_2}(\gamma_l^s(x))) \\
	&> K_1  \hat C_6 \lambda^{-Q_0}(k_2 - k_1)^{-\gamma} L(g^{k_1}(\gamma_l^s(x))) \\
	&> K_1  \hat C_6 \lambda^{-2Q_0}L(\gamma_l^s(x))N^{-\gamma}.
	\end{align*}
	Note that since $\gamma_l^s(x)$ is a full-length stable curve in $R$, the length of $\gamma_l^s(x)$ is independent of $N$. So the value $K = K_1 C_6 \lambda^{-2Q_0}L(\gamma_l^s(x))$ is independent of $N$. This proves \eqref{TLB 0}. 
\end{proof}

\section{An upper bound on the tail of the return time} 

We now prove that the tail of the return time of the Young structure of $g$ has a polynomial upper bound. Recall that $R$ is the element of the Markov partition of $g$ containing the base of the Young tower, $\Us_{0}$ is the $\rho_0$-neighborhood of the singularities, and $R\cap \Us_{0} = \emptyset.$ Given an $s$-set $\Lambda_i^s \subset R$ with $\tau(\Lambda_i^s) = n$, choose integers $q = q(\Lambda_i^s)$ and $r = r(\Lambda_i^s)$, and two finite collections of numbers $\{k_j\geq 0\}_{j = 1, \ldots, q}$ and $\{l_j \geq 0\}_{j=0,\ldots, q}$ such that
\begin{enumerate}
	\item $k_1 + k_2 + \cdots + k_q = k \quad \textrm{and} \quad l_0 + l_2 + \ldots + l_{q} = n-k$;
	\item the trajectory of the set $\Lambda_i^s$ under $g^j$, $0 \leq j \leq n$, consecutively spends $l_q$ time outside $\Us_0$ and $k_q$ times inside $\Us_0$.
\end{enumerate}
Consider now the set of $s$-sets
\[
\Ss_{k,n,q} = \{\Lambda_i^s \subset R : \tau(\Lambda_i^s) = n, k = k(\Lambda^s), q = q(\Lambda_s^i)\}.
\]
Thus $\Ss_{k,n,q}$ is the set of $s$-sets with return time $\tau(\Lambda_i^s) = n$ and that spend a total of $k$ time outside of $\Us_0$ before returning to $\Lambda_i^s$, and enter $\Us_0$ in total $q$ times. 

\begin{lemma}\label{lem:PSS8.1}
	There are $0 < h < \htop(g)$, $\epsilon_0 > 0$, and $C_8 > 0$ such that $\epsilon_0 < \htop(g)-h$ and 
	\begin{equation}\label{eq:word-limit}
\#\Ss_{k,n,q} \leq C_8 \frac 1{q^2} e^{(h+\epsilon_0)(n-k)}.
	\end{equation}
\end{lemma}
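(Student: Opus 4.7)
The plan is to adapt the symbolic counting argument of \cite[Lemma 8.1]{PSS} to our pseudo-Anosov setting, where the Katok fixed point is replaced by the finite singular set $S$ (this only affects constants). The proof has three ingredients.

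\emph{Entropy gap.} Let $\mathcal{R}_0 = \{R_{k,l}\}$ be the Markov rectangles adjacent to singularities, so that $\mathcal{U}_0 \subset \mathrm{int}\bigcup\mathcal{R}_0$ by Proposition \ref{forall-Q}. Let $X_0$ be the subshift of finite type on $\mathcal{P}\setminus\mathcal{R}_0$ inherited from $\mathcal{P}$. Because $g$ is topologically conjugate to the topologically mixing pseudo-Anosov $f$ (Proposition \ref{prop:topological-properties}(a) and \cite[Cor.\ 9.19]{FS79}), the Markov shift of $g$ is mixing, and forbidding a non-empty set of symbols from a mixing SFT strictly decreases topological entropy. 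Hence $h := h_{\mathrm{top}}(X_0) < h_{\mathrm{top}}(g)$; by shrinking $\rho_0$ and refining $\mathcal{P}$ if necessary, one arranges $h_{\mathrm{top}}(g) - h$ to exceed any prescribed threshold. Fix $\epsilon_0 > 0$ with $h + 2\epsilon_0 < h_{\mathrm{top}}(g)$ and $C_1 > 0$ so that the number of admissible $X_0$-words of length $L$ is at most $C_1 e^{hL}$.

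\emph{Sector confinement and encoding.} By the invariance $g(S^s_{k,j}) \subset S^s_{k,j}$ from Section \ref{sec:construction}, each excursion into $\mathcal{U}_0$ stays in a single stable sector, hence in a single rectangle of $\mathcal{R}_0$, and its duration $k_j$ is determined by the entry point (itself determined by the preceding outside dynamics). Therefore every $\Lambda_i^s \in \mathcal{S}_{k,n,q}$ is uniquely encoded by (i) an admissible outside word of length $n-k+1$ in $X_0$ (at most $C_1 e^{h(n-k)}$ choices), (ii) a choice of $q$ insertion positions among the $n-k-1$ interior outside slots ($\binom{n-k-1}{q}$ choices), and (iii) a sector label per excursion ($C^q$ choices, with $C := |\mathcal{R}_0|$), subject to the implicit constraint that the total inside time equals $k$. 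Dropping this constraint gives
\[
\#\mathcal{S}_{k,n,q} \leq C_1 \binom{n-k-1}{q} C^q e^{h(n-k)}.
\]

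\emph{Combinatorial absorption.} The main obstacle is absorbing $\binom{n-k-1}{q} C^q$ into $\frac{1}{q^2} e^{\epsilon_0(n-k)}$. From the identity $\sum_j \binom{n-k-1}{j} C^j = (1+C)^{n-k-1}$ each term is bounded by $(1+C)^{n-k-1} \leq e^{\epsilon_0(n-k)/2}$ thanks to the prescribed entropy gap. The remaining slack $e^{\epsilon_0(n-k)/2}$ easily dominates any polynomial factor in $q$; the extra $q^{-2}$ factor is extracted by splitting into the regimes $q \leq \sqrt{n-k}$ and $q > \sqrt{n-k}$ and applying Stirling's bound $\binom{n-k-1}{q} \leq (e(n-k)/q)^q$ to each, together with the constraints $q \leq \min(n-k-1, k)$; this is exactly the bookkeeping performed in \cite[Lemma 8.1]{PSS}. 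Assembling everything yields the claimed $\#\mathcal{S}_{k,n,q} \leq C_8 q^{-2} e^{(h+\epsilon_0)(n-k)}$ for a suitable constant $C_8$.
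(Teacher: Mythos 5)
There is a genuine gap, and it sits exactly where your argument diverges from the paper's. Your encoding in steps (ii)--(iii) records only the outside word, the $q$ insertion positions, and one sector label per excursion; it does not record the individual excursion durations $(k_1,\dots,k_q)$. For $q\ge 2$ this map is not injective: two $s$-sets in $\Ss_{k,n,q}$ with the same outside word, the same insertion positions and the same sector labels, but with the inside time $k$ split differently among the excursions (say $(10,20)$ versus $(15,15)$), have different length-$n$ itineraries and hence are different $s$-sets, yet receive the same code. Your justification for dropping this data --- that each duration ``is determined by the entry point (itself determined by the preceding outside dynamics)'' --- is not true at the level at which you use it: the preceding outside word only pins down a symbolic cylinder, and points of that cylinder enter $\Us_0$ at varying distances from the stable prongs, so they spend arbitrarily long (and in particular different) times inside; this is precisely why the return times $\tau_i$ are unbounded. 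The paper therefore must count the ordered compositions of $k$, contributing the factor $\binom{k-1}{q-1}$, and a substantial part of its proof is devoted to showing that $q^2 s^q\binom{k-1}{q-1}\binom{n-k-1}{q}\le e^{\epsilon_0(n-k)}$.

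The absorption step is also quantitatively untenable as written. Bounding $\binom{n-k-1}{q}C^q\le(1+C)^{n-k-1}\le e^{\epsilon_0(n-k)/2}$ forces $\epsilon_0\ge 2\log(1+C)$ with $C=|\Rs_0|=\sum_l 2p(l)\ge 6$, whereas the lemma requires $\epsilon_0<\htop(g)-h$ (and the tail estimate that follows needs $h+2\epsilon_0<\log\lambda$). Your proposed remedy --- enlarging the entropy gap by shrinking $\rho_0$ and refining $\Ps$ --- goes the wrong way: shrinking the forbidden region only enlarges the restricted subshift, so $h=\htop(X_0)$ increases toward $\htop(g)$ and the gap shrinks; $\epsilon_0$ must be taken small, not large. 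The mechanism that actually tames the combinatorics in the paper is the separation property of Proposition \ref{forall-Q}/\ref{exists-Q}: between consecutive visits to $\Us_0$ the orbit spends at least $Q$ steps outside, so $q+1<(n-k)/Q$, and since $Q$ may be chosen arbitrarily large one gets $q^2 s^q\binom{k-1}{q-1}\binom{n-k-1}{q}\le e^{\epsilon_0(n-k)}$ for any prescribed $\epsilon_0>0$. Your proposal never invokes this constraint, and without it even the factors you do keep (with $q$ allowed to be as large as $n-k-1$) cannot be pushed below an arbitrarily small exponential rate in $n-k$. To repair the proof you should reinstate the composition count and run the paper's bookkeeping, using $q<(n-k)/Q$ with $Q$ large in place of the $(1+C)^{n-k}$ bound.
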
 

\begin{proof}
	Recall that $\tilde\Ps$ is the Markov partition for the pseudo-Anosov homeomorphism $f : M \to M$, and that $H : M \to M$ is the conjugacy map between the pseudo-Anosov homeomorphism $f$ and its smooth model $g$, so $g \circ H = H \circ g$. Further, recall that for each singularity $x_l$, $l = 1, \ldots, m$, the Markov rectangle $\tilde R_{j,p(l)} \in \tilde\Ps$, for $1 \leq j \leq 2p(l)$, is one of the $2p(k)$ rectangles with the singularity $x_k$ as a vertex. Let $R_{j,p(l)} = H(\tilde R_{j,p(l)})$. Define the set $V = \union_{l=1}^m \union_{j=1}^{2p(l)} R_{j,l}$, and the number 
	\[
	s := \sum_{l=1}^m 2p(l)
	\]
	to be the number of Markov rectangles making up $V$, i.e., the number of Markov rectangles with a vertex containing a singularity.

	Observe that for a particular $\Lambda_i^s \in \Ss_{k,n,q}$, the symbolic representation of every $x \in \Lambda_i^s$ has the same first $n = \tau(\Lambda_i^s)$ symbols, which begin and end with $R$. By \eqref{eq:sing-nbhd-inside-rectangles}, it follows that the cardinality of $\Ss_{k,n,q}$ is less than or equal to the set of all words of length $n$ that begin and end with $R$, and which contain $k$ instances of the symbols $R_{j,p(l)}$, $1 \leq l \leq m$, $1 \leq j \leq 2p(l)$, and for which the remaining $n-k$ symbols do not have singularities in their closures. We will show that the number of such words is bounded by \eqref{eq:word-limit}. 
	
	Given $k$ and $q$, the number of ways $k$ can be partitioned into $q$ summands respecting order is $\binom{k-1}{q-1}$, and so the number of ways the orbit of $x \in \Lambda_i^s$ can enter the set $V$ $p$ times with total time in $V$ not exceeding $k$ is $\leq \binom{k-1}{q-1}$. Likewise, the number of ways $n-k$ can be partitioned into $q+1$ summands respecting order is $\binom{n-k-1}{q}$, and so the number of ways the orbit of $x \in \Lambda_i^s$ can enter $V^c$ (counting the ``zeroth'' entry when it starts in $R$) without exceeding $n-k$ is  $\leq \binom{n-k-1}{q}$. So there are $\binom{k-1}{q-1} \binom{n-k-1}{q}$ pairs of ordered sets of integers $(k_1, \ldots, k_q)$, $(l_0, \ldots, l_q)$ for which $k_1 + \cdots + k_q = k$ and $l_0 + \cdots + l_q = n-k$. 
	
	Consider one such pair of ordered sets $(k_1, \ldots, k_q)$, $(l_0, \ldots, l_q)$. By assumption, the map $f$ (and thus the map $g$) preserve the stable sectors $S_{jl}^s$ around each singularity $x_l$. Assume the Markov partition is sufficiently small so that each rectangle is contained in one of the coordinate charts $U_j$ defining the homeomorphism $f$. It follows that when the orbit of $x \in \Lambda_i^s$ enters $V$ the $r^\textrm{th}$ time, its symbolic representation contains $k_r$ copies of a single symbol $R_{j,p(l)}$. Therefore, the first $n$-letter word in the symbolic representation of an $x \in \Lambda_i^s$ with times $(k_1, \ldots, k_q)$ spent in $V$ and times $(l_0, \ldots, l_q)$ spent outside of $V$, is of the form 
	\[
	\underline{R}_{l_0} [R_{j(1),l(1)}]^{k_1} \underline{R}_{l_1} [R_{j(2),l(2)}]^{k_2} \cdots [R_{j(q),l(q)}]^{k_q} \underline{R}_{l_q}
	\]
	where each $\underline{R}_{l_r}$ is a word in $\Ps$ of length $l_r$ not including letters in $V$, and $[R_{j(r),l(r)}]^{k_r}$ is a word made of $k$ copies of $R_{j(r),l(r)}$. Observe that for each $(k_1, \ldots, k_q)$, there are $s^q$ possible configurations of $[R_{j(1),l(1)}]^{k_1}, \ldots, [R_{j(q),l(q)}]^{k_q}$. 
	
	Now consider a word of length $l_r$. Given a topologically mixing topological Markov shift $\sigma : \As^\Z\to \As^\Z$ over an alphabet $\As$, and a set $\Bs \subset \As$ of forbidden letters, there is a $C>0$ and an $h \in (0, \htop(\sigma))$ for which the number of words of length $n$ not including any symbols from $\Bs$ is $\leq Ce^{nh}$. Since the Markov shift associated to $g : M \to M$ with symbols $\Ps$ is topologically mixing (as all pseudo-Anosov homeomorphisms on surfaces are topologically transitive), it follows that the number of words of length $l_r$ not including the symbols in $\{R_{j,l}\}$ is $\leq C_8e^{hl_r}$, where $C_8> 0$ is independent of $l_r$ and $h < \htop(g) = \htop(f)$. So to summarize, for $k,n,q \geq 1$, there are $\binom{k-1}{q-1}\binom{n-k-1}{q}$ possible pairs of ordered sets $(k_1, \ldots, k_q)$, $(l_0, \ldots, l_q)$ with $k_1 + \cdots + k_q = k$, $l_0 + \ldots + l_q = n-k$; for each such ordered set $(k_1, \ldots, k_q)$, there are $s^q$ possible configurations of the $R_{j(r),l(r)}$, $1 \leq r \leq q$; and for each $l_r$, there are $\leq C_8e^{hl_r}$ possible words of length $l_r$. Since $l_0 + \cdots + l_q = n-k$, it follows that 
	\begin{equation}\label{eq:combinatorial-approx}
	\begin{aligned}
	\# \Ss_{k,n,q} &\leq C_8s^q \binom{k-1}{q-1} \binom{n-k-1}{q} e^{h(n-k)} \\
	&= \frac{C_8}{q^2}q^2 s^q \binom{k-1}{q-1} \binom{n-k-1}{q} e^{h(n-k)} .
	\end{aligned} 
	\end{equation}
	Our goal is to estimate the quantity $q^2 s^q \binom{k-1}{q-1} \binom{n-k-1}{q}$. 
	
	We begin by bounding $\binom{k-1}{q-1}$. By Proposition \ref{forall-Q}, it takes $\Lambda_i^s$ at least $Q$ iterates before it reenters $\Us_{\rho_0}$ after exiting (or after starting from the rectangle $R$). This means $n = k + l_0 + \ldots + l_q > k+(q+1)Q$, i.e., $q+1 < \frac{n-k}{Q}.$ Now, for a fixed $k$, $\binom{k-1}{q-1}$ is maximized when $q-1 = \left\lfloor \frac{k-1}{2}\right\rfloor$. It follows that $\left\lfloor \frac{k-1}{2}\right\rfloor < \frac{n-k}Q$. Using the asymptotic formula $\binom{a}{b} < \left(\frac{ae}{b}\right)^b$, we obtain: 
	\begin{equation}\label{eq:(k-1)(q-1)-est}
	\begin{aligned}
	\binom{k-1}{q-1} &\leq \binom{k-1}{\left\lfloor \frac{k-1}{2}\right\rfloor} \\ &< \left( \frac{(k-1)e}{\left\lfloor \frac{k-1}{2}\right\rfloor}\right)^{\lfloor (k-1)/2\rfloor} \\
	&\leq (2e)^{\lfloor(k-1)/2\rfloor} \\
	&< (2e)^{\frac{n-k}Q} \\
	&< e^{\frac{n-k}Q \ln(2e)}.
	\end{aligned}
	\end{equation}
	
	Next we estimate $\binom{n-k-1}{q}$. Note $q < \min\left\{ \frac{k-1}{2}, \frac{n-k}{Q}\right\}$, and so using the asymptotic formula from earlier, we observe: 
	\begin{equation}\label{eq:(n-k-1)(q)-est}
	\begin{aligned}
	\binom{n-k-1}{q} < \binom{n-k}{\left\lfloor\frac{n-k}{Q}\right\rfloor} < \left( \frac{(n-k)e}{\frac{n-k}{Q}}\right)^{\frac{n-k}{Q}} < e^{\frac{n-k}{Q} \ln\frac{(n-k)e}{(n-k)/Q}} = e^{\frac{n-k}Q \ln(Qe)}.
	\end{aligned} 
	\end{equation} 
	
	Finally, observe: 
	\begin{equation}\label{eq:q2sq-est}
	q^2 s^q = e^{2\ln q + q\ln s} < e^{2q + q\ln s} < e^{\frac{n-k}{Q}(2+\ln s)}.
	\end{equation}
	Given sufficiently small $\epsilon_0 > 0$, one can choose $Q$ large enough so that $$\frac 1 Q \left(2+\ln s + \ln(2e) + \ln(Qe)\right) < \epsilon_0.$$ Applying this to the estimates \eqref{eq:(k-1)(q-1)-est}, \eqref{eq:(n-k-1)(q)-est}, and \eqref{eq:q2sq-est}, we obtain:
	\[
	q^2s^q \binom{k-1}{q-1}\binom{n-k-1}{q} e^{h(n-k)} \leq e^{(n-k)(h+\epsilon_0)}
	\]
	and therefore, from \eqref{eq:combinatorial-approx},
	\[
	\#\Ss_{k,n,q} \leq \frac{C_8}{q^2} e^{(h+\epsilon_0)(n-k)}.
	\]
\end{proof}

\begin{lemma}\label{lem:PSS8.2}
	There is an $\epsilon_0 > 0$ such that for any $\Lambda_i^s \in \Ss_{k,n,q}$, 
	\[
	\mu_1\left(\Lambda_i^s\right) \leq C_9 k^{-\gamma'} e^{(-\log\lambda + \epsilon_0)(n-k)},
	\]
	where $C_9>0$ is a constant and $\gamma'$ is given in \eqref{gamma-defs}. 
\end{lemma}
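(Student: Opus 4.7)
\emph{Proof proposal.}

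The plan is to leverage the $g$-invariance of $\mu_1$ to reduce the problem to estimating the stable contraction along a single orbit. Since $\Lambda_i^u := g^n(\Lambda_i^s)$ is a $u$-subset of $\Lambda$ by (Y1)(b), we have $\mu_1(\Lambda_i^s) = \mu_1(\Lambda_i^u)$. The $u$-subset $\Lambda_i^u$ has full-length unstable curves (whose length is bounded by the diameter of $R$) and stable slices which are precisely the images under $g^n$ of the full stable curves of $\Lambda_i^s$. Using Lemma \ref{lem:lebesgue-near-sings} and the equivalence of $\mu_1$ with Riemannian area, I would then bound
\[
\mu_1(\Lambda_i^s) \leq C \cdot L\bigl(g^n(\gamma^s(x))\bigr)
\]
for any $x \in \Lambda_i^s$, where $\gamma^s(x) \subset \Lambda_i^s$ is the full stable curve through $x$. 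Since $L(\gamma^s(x))$ is itself bounded by a constant, the whole problem reduces to estimating the total contraction of $\gamma^s(x)$ under $g^n$.

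Next, I would split the $n$ iterations into the $n-k$ steps spent outside $\Us_0$ and the $k$ steps inside, distributed over $q$ excursions of lengths $k_1, \ldots, k_q$ with $\sum k_r = k$. Outside $\Us_0$, $g$ agrees with the uniformly hyperbolic pseudo-Anosov homeomorphism $f$ and contracts stable curves by $\lambda^{-1}$ per step, contributing a factor $\lambda^{-(n-k)}$. For each excursion into $\Us_0$, I would combine Lemma \ref{annular bound in M} (the time spent in the annulus $\Us_0 \setminus \Us_1$ is at most $T_0$ and contributes only a bounded factor) with the upper bound of Lemma \ref{lem:length ratio is polynomial} applied to the portion of the excursion actually inside $\Us_1$, yielding a contraction factor of at most $C_6' k_r^{-\gamma'}$ for some $C_6' > 0$ (with the constant adjusted to also cover short excursions that never enter $\Us_1$). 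Multiplying gives
\[
L\bigl(g^n(\gamma^s(x))\bigr) \leq C' \lambda^{-(n-k)} (C_6')^q \prod_{r=1}^q k_r^{-\gamma'}.
\]

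It then remains to extract the clean factor $k^{-\gamma'}$ from this product and absorb the $q$-dependent overhead into the prescribed $\epsilon_0$-correction. For the product, I will invoke pigeonhole: the longest excursion satisfies $k_{r^*} \geq k/q$, and all other factors are bounded by $1$ (since $k_r \geq 1$), giving
\[
\prod_{r=1}^q k_r^{-\gamma'} \leq k_{r^*}^{-\gamma'} \leq q^{\gamma'} k^{-\gamma'}.
\]
For the residual factor $(C_6')^q q^{\gamma'}$, I will use Proposition \ref{forall-Q}: the orbit requires at least $Q$ steps to re-enter $\Us_0$ after exiting, which forces $q \leq (n-k)/Q$, so $(C_6')^q q^{\gamma'} \leq \exp(q \log C_6' + \gamma' \log q)$ is dominated by $e^{\epsilon_0(n-k)}$ once $Q$ is sufficiently large, which can be arranged by taking $\rho_0$ small in the construction of $g$. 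Combining all estimates yields $\mu_1(\Lambda_i^s) \leq C_9 k^{-\gamma'} e^{(-\log\lambda + \epsilon_0)(n-k)}$.

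The hardest part will be the bookkeeping of the product $\prod k_r^{-\gamma'}$: the \emph{true} contraction in the singular windows can be as small as $(k/q)^{-q\gamma'}$ when excursions are balanced, but we must deliberately coarsen the bound via pigeonhole so that $k$ and $q$ are not entangled in the final expression. The price is the $q^{\gamma'}$ factor, which together with the combinatorial $(C_6')^q$ overhead must be subsumed into the $\epsilon_0$-perturbation of the exponential rate; this is only viable because the minimum re-entry delay $Q$ can be made arbitrarily large by construction.
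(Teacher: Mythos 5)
Your proposal is correct and follows essentially the same route as the paper's proof: reduce $\mu_1(\Lambda_i^s)$ to the length of $g^n(\gamma^s(x))$, pick up $\lambda^{-(n-k)}$ outside $\Us_0$ and a factor at most $C_6' k_r^{-\gamma'}$ on each of the $q$ singular excursions (via Lemmas \ref{annular bound in M} and \ref{lem:length ratio is polynomial}), then absorb the $q$-dependent overhead into $e^{\epsilon_0(n-k)}$ using $q \leq (n-k)/Q$ with $Q$ chosen large. The only (harmless) difference is the final arithmetic step: you use pigeonhole and pay an extra $q^{\gamma'}$ that you absorb together with $(C_6')^q$, whereas the paper takes $\rho_0$ small so that each $k_r \geq 2$, giving $k_1\cdots k_q \geq 2^{q-1}\max_r k_r \geq q\max_r k_r \geq k$ and hence $\prod_r k_r^{-\gamma'} \leq k^{-\gamma'}$ with no extra factor.
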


\begin{proof}
	Let $x \in \Lambda_i^s$, and let $\gamma_i^s(x) = \gamma_i^s \subset \Lambda_i^s$ be the connected component of the stable manifold of $x$ intersected with $\Lambda_i^s$ that contains $x$. By \eqref{TLB 1}, we have $\mu_1(\Lambda_i^s) = K_1 L(g^n(\gamma_i^s(x)))$. Note further that the length of the backwards iterates of $\gamma_i^s$ lying outside of $\Us_{0}$ are stretched by the expansion factor $\lambda$ of the pseudo-Anosov homeomorphism $f$. Additionally, the time spent in $\Us_{0} \setminus \Us_{1}$ is uniformly bounded by Lemma \ref{annular bound in M}, and therefore whenever the orbit of $\gamma_i^s$ enters $\Us_{0}$, we can use Lemmas \ref{annular bound in M} and \ref{lem:length ratio is polynomial} to give an upper bound for its length. So, letting $\{k_j \geq 0\}_{j=1,\ldots, q}$ and $\{l_j \geq 0\}_{j=0,\ldots, q}$ be such that the orbit of $\gamma_i^s$ spends $k_j$ times consecutively inside $\Us_{0}$ and $l_j$ times outside $\Us_{0}$, we obtain: 
	\begin{equation}\label{eq:lem8.2-1}
	\begin{aligned}
	\mu_1(\Lambda_i^s) &= K_1 L(g^n(\gamma_i^s)) \\
	&\leq K_1 \lambda^{-l_q}L(g^{n-l_q}(\gamma_i^s)) \\
	&\leq K_1 C_6 k_q^{-\gamma'} \lambda^{-l_q} L(g^{n-l_q - k_q}) \\
	&\vdots \\
	&\leq K_1 C_6^q \lambda^{-(l_q + \cdots + l_0)} \left(k_q k_{q-1} \cdots k_1\right)^{-\gamma'} L(\gamma_i^s). 
	\end{aligned}
	\end{equation} 
	Since $\gamma_i^s(x)$ is a full-length stable curve in $R$, its length is independent of $n=\tau(\Lambda_i^s)$. So we may take $K_1 L(\gamma_i^s) \leq K_1'$ for some $K_1' > 0$. Furthermore, if $\rho_0$ is made sufficiently small, the time $k_i$ that the orbit stays in $\Us_{0}$ may be made to be $\geq 2$. Therefore, 
	\begin{equation}\label{eq:lem8.2-2}
	k_1 k_2 \cdots k_q \geq 2^{q-1} \max_{1 \leq i \leq q} k_i \geq q\max_{1 \leq i \leq q} k_i \geq \sum_{i=1}^q k_i = k.
	\end{equation}
	Finally, $C_6^q = e^{q\ln C_6} < e^{\frac{n-k}{Q} \ln C_6} < e^{\epsilon_0(n-k)}$ for sufficiently small $\epsilon_0 > 0$ and sufficiently large $Q \geq 1$. Therefore, applying this estimate and \eqref{eq:lem8.2-2} to \eqref{eq:lem8.2-1}, we obtain:
	\[
	\mu_1(\Lambda_i^s) < K_1' e^{\epsilon_0(n-k)} \lambda^{-(n-k)} k^{-\gamma'} < C_9 k^{-\gamma'} e^{(-\log\lambda + \epsilon_0)(n-k)}.
	\]
\end{proof}

\begin{lemma}\label{tail upper bound}
	There exists a constant $C_{10} > 0$ such that 
	\[
	\mu_1(\{x \in \Lambda : \tau(x) > n\}) < C_{10} n^{-(\gamma'-1)},
	\]
	where $\gamma' > 0$ is defined in \eqref{gamma-defs}.
\end{lemma}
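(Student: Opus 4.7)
The plan is to decompose the set $\{\tau > n\}$ according to the statistics $(N,k,q)$ where $N$ is the return time, $k$ is the total time spent outside $\Us_0$, and $q$ is the number of excursions into $\Us_0$, then combine the counting bound of Lemma \ref{lem:PSS8.1} with the measure bound of Lemma \ref{lem:PSS8.2}, and finally carry out the three nested sums (over $q$, over $k$, over $N$).

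First I would write
\[
\mu_1\left(\{x \in \Lambda : \tau(x) > n\}\right)
=\sum_{N>n}\sum_{k,q}\sum_{\Lambda^s_i\in\Ss_{k,N,q}}\mu_1(\Lambda^s_i),
\]
and then apply Lemmas \ref{lem:PSS8.1} and \ref{lem:PSS8.2} to obtain
\[
\sum_{\Lambda^s_i\in\Ss_{k,N,q}}\mu_1(\Lambda^s_i)
\leq \frac{C_8 C_9}{q^2}\,k^{-\gamma'}\,e^{(h+2\epsilon_0-\log\lambda)(N-k)}.
\]
The crucial input here is that the topological entropy of a pseudo-Anosov homeomorphism equals $\log\lambda$, and hence $\htop(g)=\log\lambda$ by topological conjugacy (Proposition \ref{prop:topological-properties}(a)). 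Since $h<\htop(g)=\log\lambda$, by shrinking $\epsilon_0>0$ (which Lemma \ref{lem:PSS8.1} allows) I can arrange a strict gap $\delta:=\log\lambda-h-2\epsilon_0>0$, giving the clean exponential factor $e^{-\delta(N-k)}$.

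Next I would sum over $q$, using $\sum_{q\geq 1}q^{-2}<\infty$, to get
\[
\sum_{\Lambda^s_i:\tau(\Lambda^s_i)=N,\,k(\Lambda^s_i)=k}\mu_1(\Lambda^s_i)
\leq C\,k^{-\gamma'}\,e^{-\delta(N-k)}.
\]
The main technical step is then the sum over $k$, where I would split at $k=N/2$: on the range $k>N/2$ I use $k^{-\gamma'}\leq (N/2)^{-\gamma'}$ together with $\sum_{k>N/2}e^{-\delta(N-k)}\leq (1-e^{-\delta})^{-1}$ to obtain a bound $C'N^{-\gamma'}$; on the range $k\leq N/2$ I use $e^{-\delta(N-k)}\leq e^{-\delta N/2}$ together with $\sum_{k\geq 1}k^{-\gamma'}<\infty$ (valid since $\gamma'>2$ by \eqref{gamma-defs}) to get a bound $C''e^{-\delta N/2}$, which is dominated by $N^{-\gamma'}$. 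Thus $\sum_{\Lambda^s_i:\tau(\Lambda^s_i)=N}\mu_1(\Lambda^s_i)\leq C'''N^{-\gamma'}$.

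Finally, summing over $N>n$ and using $\gamma'>2>1$,
\[
\mu_1(\{x\in\Lambda:\tau(x)>n\})
\leq C'''\sum_{N>n}N^{-\gamma'}
\leq C_{10}\,n^{-(\gamma'-1)},
\]
which is the claimed bound. The main obstacle I anticipate is verifying that the counting and measure bounds can be combined with a strictly negative exponent in $(N-k)$: it depends on knowing that the tower base has been chosen so that the non-singular combinatorial entropy $h$ is strictly below $\log\lambda$, and on being allowed to take $\epsilon_0$ as small as needed in Lemma \ref{lem:PSS8.1} (both of which are guaranteed by the construction, since $Q$ may be chosen arbitrarily large in Proposition \ref{forall-Q}).
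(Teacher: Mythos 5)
Your proposal is correct and follows essentially the same route as the paper: decompose $\{\tau>n\}$ by the statistics $(N,k,q)$, multiply the counting bound of Lemma \ref{lem:PSS8.1} by the measure bound of Lemma \ref{lem:PSS8.2}, sum over $q$ using $\sum q^{-2}<\infty$, and exploit the gap $\delta=\log\lambda-h-2\epsilon_0>0$ coming from $h<\htop(g)=\log\lambda$. The only cosmetic difference is that you estimate $\sum_{k\leq N}k^{-\gamma'}e^{-\delta(N-k)}$ by splitting at $k=N/2$, whereas the paper uses a Stolz--Ces\`aro asymptotic; both give $\mu_1(\{\tau=N\})=O(N^{-\gamma'})$ and hence the stated tail bound after summing over $N>n$.
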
 

\begin{proof}
	Observe that: 
	\[
	\mu_1(\{x \in \Lambda : \tau(x) = n\}) \leq \sum_{k=1}^n \sum_{q=1}^k \left( \max_{\Lambda_i^s \in \Ss_{k,n,q}}\mu_1\left( \Lambda_i^s\right)\right) \#\Ss_{k,n,q}.
	\]
	It follows from Lemmas \ref{lem:PSS8.1} and \ref{lem:PSS8.2} that: 
	\begin{align*}
	\mu_1(\{x \in \Lambda : \tau(x) = n\}) &\leq \sum_{k=1}^{n} \sum_{q=1}^k C_8 C_9 \frac 1{q^2} k^{-\gamma'} e^{(-\log\lambda+\epsilon_0)(n-k)}e^{(h+\epsilon_0)(n-k)} \\
	&< C_8 C_9 \frac{\pi^2}{6} e^{-\delta n} \sum_{k=1}^n k^{-\gamma'} e^{\delta k}
	\end{align*}
	where $\delta = \log\lambda - h - 2\epsilon_0 > 0$ if $\epsilon_0 > 0$ is sufficiently small.
	
	To estimate $\sum_{k=1}^{n} k^{-\gamma'} e^{\delta k}$, let $u_k = k^{-\gamma'} e^{\delta k}$, and note that: 
	\[
	u_{k+1} - u_k = e^{\delta k} k^{-\gamma'} \left( e^\delta \left( \frac{k}{k+1}\right)^{\gamma'}-1\right) \sim e^{\delta k}k^{-\gamma'},
	\]
	where $a_k \sim b_k$ means $\lim_{k \to \infty} \frac{a_k}{b_k}$ exists and is $>0$ for positive sequences $a_k$ and $b_k$. It follows that:
	\[
	\sum_{k=1}^n u_k \sim \sum_{k=1}^n u_{k+1} - u_k = u_{n+1} - u_1 \sim e^{\delta n} n^{-\gamma'},
	\]
	where the first asymptotic comparison comes from the Stolz-Ces\`areo theorem, since $u_k > 0$ for all $k$ and the series $\sum_{k=1}^\infty u_k$ diverges. Therefore, there is a $C_{9}' > 0$ for which 
	\[
	\mu_1(\{x \in \Lambda : \tau(x) = n\}) \leq C_8 C_9 \frac{\pi^2}{6} e^{-\delta n} \sum_{k=1}^n u_k \leq C_{9}' n^{-\gamma'}. 
	\]
	It follows that there is a $C_{10} > 0$ independent of $n$ for which: 
	\[
	\mu_1(\{x \in \Lambda : \tau(x) > n\}) = \sum_{k>n} \mu_1(\{x \in \Lambda : \tau(x) = k\}) < C_{10} n^{-(\gamma' - 1)}.
	\]
	This concludes the proof of the Lemma and the upper bound on the tail of the return time.
\end{proof}

\section{Proof of Theorem \ref{Main 1}}

We now prove the main result. Statements (1) and (2) of Theorem \ref{Main 1} are shown in Propositions \ref{prop:SRB} and \ref{prop:topological-properties}. To show $g : M \to M$ is Bernoulli with respect to $\mu_1$, note the pseudo-Anosov homeomorphism $f : M \to M$ also has an invariant area measure $m_1$ by \cite{FS79}, which is absolutely continuous with respect to $\mu_1$. The conjugating homeomorphism $h : M \to M$ for which $f = h \circ g \circ h^{-1}$ is $C^1$ away from the singularities of $f$; in particular, $h$ transfers (un)stable manifolds of $g$ to (un)stable manifolds of $f$. It follows that the measure $h_* \mu_1$ is an $f$-invariant SRB measure. Since $f$ is topologically transitive, its SRB measure is unique by \cite{RH2TU}, so $h_* \mu_1 = m_1$. Since $(M, f, m_1)$ is Bernoulli by \cite{FS79}, and $f$ and $g$ are measure-theoretically isoomorphic, $g$ is also Bernoulli. This proves Statement (3). Statement (7) follows from Proposition \ref{prop:PAD-thermodynamics}, Statement (1), when $t=0$. 

In the remainder of the section, we will show that $g$ has polynomial upper and lower bounds on the decay of correlations (Statement (4)), that $g$ satisfies the CLT (Statement (5)), and that $g$ has polynomial large deviations (Statement (6)).

\subsection{Decay of correlations}

By Proposition \ref{exists-Q}, the pseudo-Anosov smooth model $g : M \to M$ is a Young diffeomorphism with base $\Lambda$, $s$-sets $\Lambda_i^s$, and inducing times $\tau = \{\tau_i\}$, $\tau_i = \tau(\Lambda_i^s)$. The associated \emph{Young tower} is the space
\[
\hat Y = \{(x,k) \in \Lambda \times \N_0 : 0 \leq k < \tau(x)\}
\]
and the associated map $\hat g : \hat Y \to \hat Y$ is given by 
\[
g(x,k) = \begin{cases}
(x,k+1) & \textrm{if } 0 \leq k < \tau(x)-1, \\
(g(x),0) & \textrm{if } k = \tau(x)-1.
\end{cases}
\]
Define the subsets $\hat M_k \subset \hat Y$ by
\[
\hat M_k = \{(x,\ell) \in \hat Y : 0 \leq \ell \leq \min\{k,\tau(x)\}\}.
\]
Note $M_k$ is the set of the first $k$ levels of the Young tower. Finally define the projection $\pi : \hat Y \to M$ by $\pi(x,\ell) = g^\ell(x)$, and define
\[
Y = \pi(\hat Y), \quad M_k =  \pi(\hat M_k).
\]
Note that the sets $M_k$ are nested and exhaust $Y$. 

Proving that $g : M \to M$ admits upper and lower bounds on polynomial decay of correlations requires the following result, which follows from Theorem 2.3 in \cite{SZ-inducing} and Theorem 7.1 in \cite{PSZTowers} and its proof: 

\begin{proposition}\label{prop:dark-tower}
	Assume that: 
	\begin{itemize}
		\item the greatest common divisor of the inducing times $\tau_i = \tau(\Lambda_i^s)$ is 1; 
		\item there is a constant $C>0$ for which for all $\Lambda_i^s \subset \Lambda$, all $x, y \in \Lambda_i^s$, and all $0 \leq k \leq \tau_i$, 
		\[
		d(f^k(x), f^k(y)) \leq C\max \left\{d(x,y), d(f^{\tau_i}(x), f^{\tau_i}(y))\right\};
		\]
		\item there are constants $\theta > 1$ and $B > 0$ such that 
		\[
		m(\tau > n) \leq Bn^{-\theta}.
		\]
	\end{itemize}
	Then the following statements hold: 
	\begin{enumerate}[label=\emph{(\alph*)}]
		\item There is a constant $B_1 > 0$ such that $\mathrm{Cor}_n(h_1, h_2) \leq B_1 n^{1-\theta}$ for any $h_1, h_2 \in C^\eta(M)$.
		\item For any $h_1, h_2 \in C^\eta(M)$ supported in $M_k$ for some $k > 0$, we have: 
		\begin{equation}\label{eq:PSS-37}
		\mathrm{Cor}_n(h_1, h_2) = \sum_{k=n+1}^\infty m(\tau(x) > k) \int h_1 \,dm \int h_2 \,dm + O(R_\theta(n)),
		\end{equation}
		where:
		\[
		R_\theta(n) = \begin{cases}
			n^{-\theta} & \textrm{if } \theta > 2, \\
			n^{-2}\log n & \textrm{if } \theta = 2, \\
			n^{-2(\theta-1)} & \textrm{if } 1 < \theta < 2.
		\end{cases} 
		\]
		Moreover, if $\int h_1 \,dm \int h_2\,dm = 0$, then $\mathrm{Cor}_n(h_1, h_2) = O(n^{-\theta})$. 
	\end{enumerate}
\end{proposition}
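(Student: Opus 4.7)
The plan is to derive Proposition \ref{prop:dark-tower} as a direct consequence of Theorem 2.3 in \cite{SZ-inducing} and Theorem 7.1 in \cite{PSZTowers}, by reducing all estimates to the Young tower system $(\hat Y, \hat g)$ and verifying that H\"older observables on $M$ lift to admissible observables on the tower. First I would construct the invariant probability measure $\hat\mu_1$ on $\hat Y$ whose projection under $\pi$ recovers $\mu_1$ (up to normalization), obtained by spreading $\mu_1|_\Lambda$ uniformly across each tower level. The first hypothesis, $\gcd\{\tau_i\} = 1$, yields aperiodicity and hence mixing of $\hat g$ with respect to $\hat\mu_1$. The third hypothesis, $m(\tau > n) \leq Bn^{-\theta}$, is exactly the tail bound required by both cited theorems as input.

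The key setup step is then to check that the ``uniform pairing'' hypothesis $d(g^k x, g^k y) \leq C\max\{d(x,y), d(g^{\tau_i}x, g^{\tau_i}y)\}$, together with the contraction/expansion properties (Y3) and the distortion bounds (Y4), implies that for $h \in C^\eta(M)$ the lifted observable $\hat h := h \circ \pi$ lies in the class of dynamically H\"older functions on $\hat Y$: that is, its oscillation over cylinders of depth $n$ decays geometrically in $n$. Once this is in place, Theorem 2.3 of \cite{SZ-inducing} applied to $(\hat Y, \hat g, \hat\mu_1)$ yields $|\mathrm{Cor}_n(\hat h_1, \hat h_2)| \leq B_1' n^{1-\theta}$ on the tower, and since $\pi$ intertwines $\hat g$ and $g$ while pushing $\hat\mu_1$ onto $\mu_1$, this bound transfers verbatim to $M$, giving part (a).

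For part (b), I would invoke the operator-renewal machinery of \cite{PSZTowers}: the transfer operator $\hat L$ of $\hat g$ on an appropriate Banach space of dynamically H\"older functions admits, after a Tauberian-type analysis of the return-time generating function, a decomposition whose leading asymptotic term is exactly $\sum_{k>n} \hat\mu_1(\tau > k)$ times the product of spatial integrals, with a secondary error controlled by the regime of $\theta$. Rewriting the correlation integral $\int \hat h_1 \cdot (\hat h_2 \circ \hat g^n)\,d\hat\mu_1$ using this decomposition produces \eqref{eq:PSS-37} with the three cases of $R_\theta(n)$ reflecting whether the tail is subcritical ($\theta > 2$), critical ($\theta = 2$), or supercritical ($1 < \theta < 2$). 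The final claim -- that $\mathrm{Cor}_n = O(n^{-\theta})$ whenever $\int h_1\,dm \int h_2\,dm = 0$ -- is then immediate from the formula, since that hypothesis kills the leading term.

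The main obstacle will be the first technical step: verifying that $h \circ \pi$ genuinely lies in the Banach space on which the cited spectral results apply. This requires combining the Markov structure, the bounded distortion of (Y4), and the hypothesis on $d(g^k x, g^k y)$ in a careful way, and for part (b) additionally requires showing that the support condition $\mathrm{supp}(h_i) \subset M_k$ translates into a bound on the tower depth of $\hat h_i$ in $\hat Y$ -- so that $\hat h_i$ only sees finitely many levels and no tail of the tower contributes beyond the explicitly displayed leading term. Once this regularity transfer is established, Theorems 2.3 of \cite{SZ-inducing} and 7.1 of \cite{PSZTowers} apply directly and give both conclusions.
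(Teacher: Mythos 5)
Your proposal is correct and follows essentially the same route as the paper: the paper does not prove this proposition independently but derives it directly from Theorem 2.3 of \cite{SZ-inducing} and Theorem 7.1 of \cite{PSZTowers}, noting that the latter verifies the potential conditions (P1)--(P4) for the geometric potential of a Young diffeomorphism, which is exactly the reduction-to-the-tower argument you outline. Your additional sketch of the renewal-operator asymptotics and the lifting of H\"older observables is a reasonable account of what those cited theorems do internally, but it is not material the paper itself supplies.
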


\begin{remark}
	The consequences in Proposition \ref{prop:dark-tower} are the same as in Theorem 2.3 in \cite{SZ-inducing}. There, the authors prove these results in higher generality for equilibrium states for a given potential; however, in addition to the above assumptions, the potential is assumed to satisfy certain conditions ((P1) - (P4) in \cite{SZ-inducing} and \cite{PSZTowers}). In the proof of Theorem 7.1 in \cite{PSZTowers}, it is shown that the geometric potential $\phi(x) = -\log\left| dg|_{E^u(x)}\right|$ of a Young diffeomorphism $g$ satisfies conditions (P1) - (P4) in \cite{SZ-inducing}. Since the pseudo-Anosov smooth model $g : M \to M$ is a Young diffeomorphism, it remains only to verify the assumptions in Proposition \ref{prop:dark-tower} to apply the result to the pseudo-Anosov smooth model $g$. 
\end{remark}

\begin{proof}[Proof of Theorem \ref{Main 1}, (4)] We begin by proving the upper bound (statement (a) of Theorem \ref{Main 1}, (4)). By Proposition \ref{prop:dark-tower}, the claim is immediate once we verify the three conditions of the proposition. 
	
	First, recall that $g$ is topologically conjugate to the pseudo-Anosov homeomorphism $f$. Since $f$ is Bernoulli \cite{FS79}, every power of $f$ is ergodic. If $\tilde\Lambda = \union_{i \geq 1} \tilde\Lambda_i^s$ is the base of the Young structure for $f$ and the inducing times are $\tilde\tau : \tilde\Lambda \to \N_0$ (see Section \ref{subsec:Realizing $g$ as a Young diffeomorphism}), then $\gcd(\tilde\tau_i) = 1$ (where $\tilde\tau_i = \tilde\tau(\tilde\Lambda_i^s)$), and so $\gcd(\tau_i) = 1$. 

The second assumption of Proposition \ref{prop:dark-tower} follows from the fact that $g : M \to M$ has a Young structure, and the images $g^k(\Lambda_i^s)$, $1 \leq k \leq \tau_i - 1$, have diameter less than the diameter of the Markov partition. 

Finally, the third assumption holds because by Lemmas \ref{tail lower bound} and \ref{tail upper bound}, we have
\begin{equation}\label{eq:38}
\frac{C_8}{n^{\gamma-1}} < \mu_1(\{x \in \Lambda : \tau(x) > n\}) < \frac{C_{10}}{n^{\gamma' - 1}}
\end{equation}
where $\gamma, \gamma'$ are defined in \eqref{gamma-defs}. Note for $0 < \alpha < \frac 1 4$ and $0 < \mu < \frac 1 2$ we have that $\gamma > \gamma' > 2$, so the third assumption holds. Statement (a) of Proposition \ref{prop:dark-tower} gives the upper bound on the decay of correlations (statement (4)(a) of Theorem \ref{Main 1}), using $\gamma_1 = \gamma' - 2 > 0$. 

To prove the lower bound, by Statement (b) of Proposition \ref{prop:dark-tower} with $\theta = \gamma' - 1 > 1$, we have that for all $h_1, h_2 \in C^\eta(M)$ supported in $M_k$, for some $k > 0$:
\begin{equation}\label{eq:39}
\mathrm{Cor}_n(h_1, h_2) = \sum_{k=n+1}^\infty \mu_1(\{x : \tau(x) > k\}) \int_M h_1\,d\mu_1 \int_M h_2 \, d\mu_1 + O(R_{\gamma'-1}(n)),
\end{equation}
where we recall from the definition of $R_{\theta}$ in Proposition \ref{prop:dark-tower} that
\[
R_{\gamma'-1}(n) = \begin{cases}
	n^{-\gamma'+1} & \textrm{if } \gamma' > 3, \\
	n^{-2} \log n & \textrm{if } \gamma' = 3, \\
	n^{-2(\gamma - 2)} & \textrm{if } 2 < \gamma' < 3.
\end{cases}
\]
We consider separately the three cases $\gamma' > 3$, $2 < \gamma' < 3$, and $\gamma' = 3$.

If $\gamma' > 3$ (which is guaranteed if $\alpha < \frac 1 6$), then by the assumption in Statement (4)(b) of Theorem \ref{Main 1}, we may apply \eqref{eq:38} and \eqref{eq:39} above and obtain:
\begin{equation}\label{eq:40}
\mathrm{Cor}_n(h_1, h_2) > \sum_{k=n+1}^\infty K_1' n^{-(\gamma-1)} + K_2 n^{-(\gamma' - 1)} > K_1 n^{-(\gamma-2)} + K_2 n^{-(\gamma'-1)}
\end{equation}
for constants $K_1$ and $K_2$ (depending on $h_1, h_2$). By the definitions of $\gamma$ and $\gamma'$ in Equation \eqref{gamma-defs}, after choosing $0 < \mu < \frac 1 2$, we can show $\gamma - 2 < \gamma' - 1$ for all $0 < \alpha < \frac 1 6$. So there is a $C > 0$ for which
\[
\mathrm{Cor}_n(h_1, h_2) > \frac{C}{n^{\gamma-2}}.
\]

Now consider the case where $\frac 1 6 < \alpha < \frac 1 4$. In this situation, $\gamma' > 2$, but depending on the value of $\mu$, we may have either $\gamma' > 3$ or $\gamma' < 3$. We assume the latter; otherwise we're back in the first case. With this assumption, similar to \eqref{eq:40}, we can use \eqref{eq:38} and \eqref{eq:39} to show: 
\[
\mathrm{Cor}_n(h_1, h_2) > K_1 n^{-(\gamma-2)} - K_3 n^{-2(\gamma'-2)},
\]
for some constants $K_1$ and $K_3$ depending on $h_1$ and $h_2$. As before, choosing $0 < \mu < \frac 1 2$, one can show $\gamma - 2 < 2\gamma' - 4$ for all $0 < \alpha < \frac 1 4$. This gives us the estimate 
\[
\mathrm{Cor}_n(h_1, h_2) > Cn^{-(\gamma-2)}
\]
for some $C>0$ and all $0 < \alpha < \frac 1 4$. In particular, we have $\gamma_2 = \gamma-2 > 0$. This gives us both (a) and (b) of Statement (4) of Theorem \ref{Main 1}.
\end{proof}


\subsection{The Central Limit Theorem}

\begin{proof}[Proof of Theorem \ref{Main 1}, (5)] By Statement (4)(a) of Theorem \ref{Main 1}, if $h \in C^\eta$ satisfies $\int h \, d\mu_1 = 0$, then $\mathrm{Cor}_n(h,h) = O(n^{-(\gamma'-1)}$. Therefore the correlation function is summable for $\gamma' > 2$. It follows from Theorem 1.1 of \cite{Liv95} that the system $(M, g, \mu_1)$ has the central limit theorem with respect to H\"older potentials. 
\end{proof}

\subsection{Large Deviations}

\begin{proof}[Proof of Theorem \ref{Main 1}, (6)] 
	Because the map $g : M \to M$ is modeled  by a Young tower, the upper bound in \eqref{eq:38} allows us to use Theorem 4.2 in \cite{MN08} to show that for $0 < \alpha < \frac 1 4$ (so that $\gamma' > 2$), and for all sufficiently small $a > 0$ and all H\"older $h : M \to \R$, there is a constant $C = C_{h,a}$ depending continuously on $h$ (in the $C^\eta$ topology) such that for all $\epsilon > 0$ and all sufficiently large $n \geq 0$, 
	\[
	\mu_1\left(\left\{ \left| \frac 1 n \sum_{i=0}^{n-1} h(g^i(x)) - \int h \,d\mu_1\right| > \epsilon \right\}\right) < C_{h,a} \epsilon^{-2(\gamma'-2-a)} n^{-(\gamma' - 2- a)}.
	\]
	This proves the first part of Statment (6) of Theorem \ref{Main 1}. To obtain a lower bound on the large deviations, we will use Theorem 4.3 in \cite{MN08}. The one condition of this theorem that needs to be checked is that $\mu_1(\overline M_k) < 1$ for some $k \geq 0$, where we recall $M_k = \pi(\hat M_k)$ and 
	\[
	\hat M_k = \{(x,\ell) \in \hat Y : 0 \leq \ell \leq \min \{k,\tau(x)\} \}
	\]
	and $\pi : \hat Y \to M$ is the projection $\pi(x,k) = f^{k}(x)$ for $x \in \Lambda$, $0 \leq k \leq \tau(x)-1$. 
	
	Given $k \geq 0$, choose a partition element $\Lambda_i^s$ in the base $\Lambda$ of the Young tower with $\tau(\Lambda_i^s) \leq k$. Identifying $\Lambda_i^s$ with a subset of the 0-level of the tower $\hat Y$, we see $\Lambda_i^s \subset \hat Y \setminus \hat M_k$, and we see that $\hat\mu_1(\Lambda_i^s) > 0$, where $\hat\mu_1$ is the lifted measure of $\mu_1$ to the tower $\hat Y$. It follows that $\hat\mu_1(\hat M_k) < 1$, and since the projection $\pi : \hat Y \to M$ is measure-preserving, $\mu_1(M_k) < 1$. So, by Theorem 4.3 in \cite{MN08}, for small $\epsilon > 0$, an open and dense subset of H\"older observables $h$, and infinitely many $n$, we obtain the lower bound 
	\[
	n^{-(\gamma' - 2 + a)} < \mu_1\left(\left\{ \left| \frac 1 n \sum_{i=0}^{n-1} h(g^i(x)) - \int h \,d\mu_1\right| > \epsilon \right\}\right)
	\] 
\end{proof}

\end{document}